\documentclass[11pt,reqno,a4paper]{amsart}
\usepackage[T1]{fontenc}
\usepackage[utf8]{inputenc}
\usepackage{lmodern,amssymb,dsfont,enumerate}
\usepackage[pagebackref]{hyperref}
\usepackage[portrait,a4paper,margin=3cm]{geometry}

\usepackage{tikz}
\usetikzlibrary{arrows}

\title[Random Markov matrices with heavy tailed weights]%
{Spectrum of large random Markov chains: Heavy-tailed weights on the oriented complete graph}

\date{To appear in Random Matrices: Theory and Applications (2017)}

\author{Ch.~Bordenave}
\address[Charles~Bordenave]{IMT UMR5219 CNRS Universit\'e de Toulouse, France}
\email{charles.bordenave(at)math.univ-toulouse.fr}
\urladdr{http://www.math.univ-toulouse.fr/~bordenave/}

\author{P.~Caputo}
\address[Pietro~Caputo]{Dipartimento di Matematica, Universit\`a Roma Tre, Italy}
\email{caputo(at)mat.uniroma3.it}
\urladdr{http://www.mat.uniroma3.it/users/caputo/}

\author{D.~Chafa\"\i}
\address[Djalil~Chafa{\"{\i}}]{CEREMADE UMR7534 CNRS Université Paris-Dauphine PSL IUF, France}
\email{djalil(at)chafai.net} 
\urladdr{http://djalil.chafai.net/}

\author{D.~Piras}
\address[Daniele~Piras]{Dipartimento di Matematica, Universit\`a Roma Tre, Italy}
\email{piras(at)mat.uniroma3.it}

\keywords{Spectral theory; Objective method; Operator convergence; Logarithmic
  potential; Random matrix; Random Graph; Heavy tailed distribution; Stable
  law.}
\subjclass[2000]{47A10; 15A52; 05C80.}

\newtheorem{theorem}{Theorem}[section]%
\newtheorem{lemma}[theorem]{Lemma}
\newtheorem{proposition}[theorem]{Proposition}%
\newtheorem{example}[theorem]{Example}%
\newtheorem{remark}[theorem]{Remark}%

\newcommand{\dC}{\mathbb{C}}\newcommand{\dD}{\mathbb{D}}
\newcommand{\dE}{\mathbb{E}}

\newcommand{\dN}{\mathbb{N}}
\newcommand{\dP}{\mathbb{P}}
\newcommand{\dR}{\mathbb{R}}

\newcommand{\dZ}{\mathbb{Z}}
  
 \newcommand{\bbD}{{\mathbb D}} 
\newcommand{\bbE}{{\mathds E}}

 \newcommand{\bbN}{{\mathbb N}} 
 \newcommand{\bbP}{{\mathds P}} 
 \newcommand{\bbR}{{\mathds R}}


\newcommand{\cA}{\mathcal{A}}
\newcommand{\cD}{\mathcal{D}}

\newcommand{\cL}{\mathcal{L}}

\newcommand{\cT}{\mathcal{T}}
\newcommand{\cU}{\mathcal{U}}


%
\newcommand{\al}{\alpha} 
\newcommand{\be}{\beta}
\newcommand{\veps}{\varepsilon}
\newcommand{\de}{\delta}
\newcommand{\la}{\lambda}
 
\newcommand{\g}{\gamma} 
    
\newcommand{\om}{\omega}

\newcommand{\vte}{\vartheta}
\newcommand{\G}{\Gamma}  
\newcommand{\te}{\theta}

\newcommand{\si}{\sigma}
\newcommand{\e}{\mathrm{e}}
\newcommand{\dd}{\mathrm{d}}

\renewcommand{\leq}{\leqslant}             
\renewcommand{\geq}{\geqslant}             
\newcommand{\eset}{\varnothing}          
\newcommand{\scalar}[2]{\langle #1 , #2\rangle}

\newcommand{\wt}{\widetilde}
\newcommand{\ind}{\mathbf{1}}

\newcommand{\tr}{\mathrm{tr}}

\newcommand{\bv}{\mathbf{v}}
\newcommand{\bu}{\mathbf{u}}

\newcommand{\supp}{\mathrm{supp}}

\newcommand{\dist}{\mathrm{dist}}

\renewcommand{\Im}{\mathfrak{Im}}

\newcommand{\bx}{{\mathbf{x}}}
\newcommand{\bk}{{\mathbf{k}}}
\newcommand{\wh}{\widehat}
\newcommand{\weak}{\rightsquigarrow}

\numberwithin{equation}{section}

\begin{document}

\begin{abstract}
  We consider the random Markov matrix obtained by assigning i.i.d.\
  non-negative weights to each edge of the complete oriented graph. In this
  study, the weights have unbounded first moment and belong to the domain of
  attraction of an alpha-stable law. We prove that as the dimension tends to
  infinity, the empirical measure of the singular values tends to a
  probability measure which depends only on alpha, characterized as the
  expected value of the spectral measure at the root of a weighted random
  tree. The latter is a generalized two-stage version of the Poisson weighted
  infinite tree (PWIT) introduced by David Aldous. Under an additional
  smoothness assumption, we show that the empirical measure of the eigenvalues
  tends to a non-degenerate isotropic probability measure depending only on
  alpha and supported on the unit disc of the complex plane. We conjecture
  that the limiting support is actually formed by a strictly smaller disc.
\end{abstract}
\thanks{Support: A*MIDEX project ANR-11-IDEX-0001-02 funded by the
  ``Investissements d'Avenir'' French Government program, managed by the
  French National Research Agency (ANR)}
\maketitle
\thispagestyle{empty}
{\footnotesize\tableofcontents}
\section{Introduction}

A natural way to construct a random $n\times n$ Markov matrix is to assign
i.i.d.\ (independent and identically distributed) non-negative weights
$X_{i,j}$ with a given law $\cL$ to each ordered pair $(i,j)$,
$i,j=1,\dots,n$, and then consider the stochastic matrix $M$ obtained by
normalizing each row with the corresponding row sum:
\begin{equation}\label{eq:mn}
  M_{i,j}:=\frac{X_{i,j}}{\rho_i} \quad\text{where}\quad\rho_i:=\sum_{j=1}^nX_{i,j}\,.%
\end{equation} 
If $\rho_i=0$ for some $i$, it is understood that $M_{i,i}=1$ and $M_{i,j}=0$
for all $j\neq i$. The random matrix $M$ has independent rows but
non-independent columns. The eigenvalues of $M$, that is the complex roots of
$\det(M-z)$, are denoted by $\la_1,\ldots,\la_n$. Since $M$ is a
stochastic matrix, one has $\la_1=1$ and $|\la_j|\leq 1$, $j=2,\dots,n$; see e.g.\ \cite[Chap.\ 8]{MR1084815}. If the
symmetry of the weights $X_{i,j}=X_{j,i}$ is imposed, then the resulting
Markov chain is reversible with respect to the row sum measure $\rho_i$, and has
real spectrum. Such reversible models have been studied in
\cite{BCCalea,BCChr}. Here we consider the case where $X_{i,j}$ and $X_{j,i}$
are independent. In this case the Markov chain is non-reversible and has
complex eigenvalues.

If $\cL$ has finite second moment, it was shown in \cite{BCCm} that the
spectrum is asymptotically uniformly distributed in the disc of radius
$\si/\sqrt n$, where $\si^2$ denotes the variance of $\cL$, i.e.\
\begin{equation}\label{eq:con}
  \frac 1 n \sum_{k=1} ^n \delta_{\la_k \sqrt{\tfrac{n}{\si^2}}}
  \underset{n\to\infty}{\weak}\cU
  \,,%
\end{equation} 
where $\cU$ is the uniform law on the unit disc $\bbD=\{z\in\dC:|z|\leq 1\}$
and $\weak$ denotes the weak convergence of probability measures with respect
to continuous and bounded test functions. Similar results were recently
obtained for discrete matrices with given row sums \cite{NguyenVu}, for the
ensemble of uniformly random doubly stochastic matrices
\cite{ChatterjeeDiaconisSly,Nguyen} and for random matrices with exchangeable entries \cite{ACW}.

In this paper we consider the infinite variance case.
We shall actually restrict our attention to the particularly interesting case
where the law $\cL$ of the entries has infinite first moment.
Our main results can be formulated as follows. We assume that for some
$\al\in(0,1)$, the random variables $X_{i,j}$ are i.i.d.\ copies of a random
variable $\bx$ satisfying the assumptions below:
\begin{enumerate}
\item[(H1)] 
  $\bx$ is a non-negative random variable such that 
  \begin{equation}\label{eq:H}
    c:=\lim_{t\to\infty} t^\al\, \dP ( \bx \geq t )>0 .
  \end{equation}
\item[(H2)] $\bx$ has a bounded probability density function. 
\end{enumerate}
It is well known that a random variable satisfying (H1) is in the domain of
attraction of an $\al$-stable law. An example of random variable satisfying
both (H1) and (H2) is $\bx= U^{-1/\al}$, where
$U$ 
is any bounded non-negative random variable with continuous probability density
$\varphi$ on $[0,\infty)$ such that $\varphi(0)>0$.

We recall that for every fixed $i$, the ordered rearrangement of the random
row vector $\{M_{i,j},\,j=1,\dots, n\}$ converges weakly to the
Poisson-Dirichlet distribution with parameter $\al\in(0,1)$; see
\cite{PitmanYor}, \cite[Lem.~2.4]{BCChr}. Thus, one expects that the
distribution of eigenvalues of $M$ converges to a nontrivial probability
measure on the unit disc $\dD$ without any further rescaling. This is what we
prove in Theorem \ref{th:girko} below. The proof, following the ``objective
method'' philosophy \cite{aldoussteele}, will be based on the construction of
an infinite random tree that can be identified as a suitable local limit of
the random matrix $M$.

As usual for the non Hermitian setting, the analysis of the eigenvalues starts
with the understanding of the asymptotic behavior of the singular values of
$M-z$, for $z\in\dC$. Here and below, if no confusion arises, we write $z$ for
the diagonal matrix $zI$. Consider the singular values $s_{k,z}$,
$k=1,\dots,n$, that is the eigenvalues of $\sqrt{(M-z)(M-z)^*}$, and write
$\nu_{M,z}$ for the associated empirical distribution:
\begin{equation}\label{eq:num}
  \nu_{M,z} := \frac 1 n \sum_{k=1} ^n \delta_{s_{k,z}}.
\end{equation}

\begin{theorem}[Singular values]\label{th:numz}
  If $(H1)$ holds, then for each $z\in\dC$ there exists a probability measure
  $\nu_{\al,z}$ on $[0,\infty)$ depending only on $\al$ and $|z|$ such that
  almost surely
  \begin{equation}\label{eq:numz1}
    \nu_{M,z} \underset{n\to\infty}{\weak} \nu_{\al,z}.
  \end{equation}
  For any $z\in\dC$ the measure $\nu_{\al,z}$ has unbounded support and satisfies, for all $\la>0$, 
  \begin{equation}\label{eq:numz2}
    \int_{0}^{\infty}\e^{\la s}\nu_{\al,z}(\dd s)<\infty.
  \end{equation} 
\end{theorem}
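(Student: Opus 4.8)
To prove Theorem \ref{th:numz} the plan is to Hermitize the problem and run the objective method \cite{aldoussteele}, then read off \eqref{eq:numz2} and the unboundedness of the support from a single moment estimate.

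\emph{Hermitization and reduction to $|z|$.} The singular values $s_{k,z}$ are the moduli of the eigenvalues of the $2n\times 2n$ Hermitian matrix
\[
\cH_z=\begin{pmatrix}0 & M-z\\ (M-z)^*& 0\end{pmatrix},
\]
whose spectrum is the symmetric set $\{\pm s_{k,z}\}_{k\le n}$. Writing $\cnu_{M,z}$ for its empirical spectral distribution, \eqref{eq:numz1} is equivalent to the almost sure weak convergence $\cnu_{M,z}\weak\cnu_{\al,z}$ to a symmetric probability measure, with $\nu_{\al,z}$ the pushforward of $\cnu_{\al,z}$ under $x\mapsto|x|$. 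Since multiplying by the unit scalar $\e^{-\ibf\arg z}$ does not change singular values, we may replace $M-z$ by $\e^{-\ibf\arg z}M-|z|$; this multiplies every off-diagonal mark of the weighted graph below by a single global phase, and a global phase on the edges of a tree is a gauge transformation, hence invisible in the local limit — this is the mechanism behind the dependence of $\cnu_{\al,z}$ on $\al$ and $|z|$ only.

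\emph{Truncation and the objective method.} Because the weighted graph carrying $\cH_z$ is complete bipartite, hence dense, I would first sparsify it: fix $\veps>0$, let $a_n$ be the scale with $n\,\dP(\bx\ge a_n)\to1$, and keep only the entries $X_{i,j}>\veps a_n$, obtaining $M^{(\veps)}$. Since $\al\in(0,1)$, most of each row's mass lies on finitely many entries (the Poisson--Dirichlet phenomenon recalled in the introduction), so $\dE\,\tfrac1n\|M-M^{(\veps)}\|_{F}^{2}\le\psi(\veps)$ with $\psi(\veps)\to0$, uniformly in $n$; by the Hoffman--Wielandt inequality for singular values this makes the Wasserstein distance between $\nu_{M,z}$ and $\nu_{M^{(\veps)},z}$ small, a.s.\ and in mean. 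The rooted weighted graph of $\e^{-\ibf\arg z}M^{(\veps)}-|z|$ now has bounded average degree, and one shows it converges in the local weak sense to a random rooted weighted tree $(T_{\veps,z},o)$, a generalized two-stage $\pwit$: each vertex plants an independent Poisson process of marks with $\al$-stable intensity on $(0,\infty)$, each row-type vertex is normalized by the sum of the marks on all its incident edges (the limiting row sum, which is almost surely finite precisely because $\al<1$), only edges with mark $>\veps$ are kept, and the shift $-|z|$ sits on the distinguished edges matching each row vertex to its column vertex. After verifying that the associated operator $\cA_{\veps,z}$ is almost surely essentially self-adjoint, the continuity of spectral measures along local weak convergence of unimodular weighted graphs gives $\cnu_{M^{(\veps)},z}\weak\dE[\mu^o_{\cA_{\veps,z}}]$ almost surely; letting $\veps\downarrow0$ the operators converge to the full two-stage $\pwit$ operator $\cA_z$ (whose law depends on $\al$ and $|z|$ only) and $\cnu_{\al,z}:=\dE[\mu^o_{\cA_z}]$ satisfies \eqref{eq:numz1}.

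\emph{Exponential moments and unbounded support.} The key quantitative input is the uniform bound
\[
\frac1n\,\dE\,\tr\,\cH_z^{2m}\le (C_z\,m)^{m},\qquad m\ge1,\ n\ge1,
\]
with $C_z$ depending only on $|z|$, proved by the method of moments: one counts weighted closed walks on the complete bipartite graph, using $0\le M_{i,j}\le1$, $\sum_jM_{i,j}=1$, and moment estimates for the heavy-tailed row sums $\rho_i$. Because $\sum_m\la^{2m}(C_zm)^m/(2m)!<\infty$ for every $\la>0$, this yields $\sup_n\dE\int_0^\infty\e^{\la s}\,\nu_{M,z}(\dd s)<\infty$, and combining with \eqref{eq:numz1} and Fatou's lemma gives \eqref{eq:numz2}. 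For the support, I would produce a matching lower bound: for every $R$, the expected fraction of columns $j$ of $M$ that receive at least $k:=\lceil cR^2\rceil$ \emph{dominating} rows (rows $i$ with $X_{i,j}>\tfrac12\rho_i$, hence $M_{i,j}>\tfrac12$) is bounded below by $c'^{\,k}/k!>0$ for constants $c,c'>0$; these ``concentrated stars'' are vertex-disjoint in the rows (a row can dominate at most one column), so by eigenvalue interlacing applied to the near rank-one blocks they create in $(M-z)(M-z)^*$ one gets $\nu_{M,z}([R,\infty))\gtrsim c'^{\,k}/k!$, eventually almost surely. With \eqref{eq:numz1} this forces $\nu_{\al,z}([R,\infty))>0$ for every $R$, i.e.\ unbounded support; the same configurations explain why $\|M\|_{\mathrm{op}}$ itself diverges, of order $\sqrt{\log n/\log\log n}$, so that a mere domination by $\|M\|_{\mathrm{op}}^{2m}$ is useless and no decay faster than exponential can hold.

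\emph{Main obstacle.} The crux is the uniform moment estimate $\tfrac1n\dE\tr\cH_z^{2m}\le(C_zm)^{m}$: since $M$ is dense and $\|M\|_{\mathrm{op}}\to\infty$, the bound must come from a careful combinatorial analysis of weighted closed walks that simultaneously exploits the $\ell^1$ normalization and the $\al$-stable statistics of the $X_{i,j}$, and it must be tight enough to coexist with the concentrated-star lower bound. A secondary technical point is establishing the essential self-adjointness of the two-stage tree operator together with the almost sure finiteness of the row-sum series, both of which rest on $\al<1$.
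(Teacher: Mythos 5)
Your architecture (Hermitization, local convergence to a two-stage Poisson weighted tree, moments for the tail claims) matches the paper's, but the execution diverges at the decisive quantitative step, and that is where your proposal has a genuine gap. You hang both \eqref{eq:numz2} and (implicitly) the well-definedness of the limit's moments on the uniform finite-$n$ bound $\tfrac1n\,\dE\,\tr\,\cH_z^{2m}\le (C_zm)^m$, which you assert ``by the method of moments'' and then yourself flag as the unproven crux. This is not a routine verification: at finite $n$ the closed walks on the complete bipartite graph are not tree-like, the entries of $M$ within a column are dependent through the row sums $\rho_i$, and $\NRM{M}_{\mathrm{op}}$ diverges, so the combinatorics you would need is strictly harder than anything in the rest of your argument. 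The paper avoids this entirely: it never proves a uniform finite-$n$ trace bound. Instead it computes the even moments $m_{2k}$ of the \emph{limit} measure $\bar\mu^{(z)}$ directly as $\dE\scalar{\de_\eset}{\hat\cT_\pm(z)^{2k}\de_\eset}$ on the tree, where the walk sum collapses onto Dyck paths and yields $m_{2k}\le|\cD_k|\,\dE[(|z|^2+(1\vee\Psi))^k]$ with $\Psi=\sum_j\om_j^2$; Campbell's formula for the Poisson process then gives finite exponential moments of $\sum_j\om_j$ and hence \eqref{eq:numz2}. The same tree computation gives unbounded support in one line, $m_{2k}\ge\tfrac12\dE[\Psi^k]$ with $\Psi$ unbounded, making your ``concentrated stars'' construction (which is plausible but requires a second-moment/concentration argument and an interlacing step you only sketch) unnecessary. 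I would strongly suggest redirecting your moment analysis to the limiting operator.

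Two further, smaller discrepancies. First, your truncation at level $\veps a_n$ plus Hoffman--Wielandt is a legitimate alternative to the paper's route, which instead establishes strong resolvent convergence of the full (untruncated) operators $T_n(M,z)$ to $\hat\cT_\pm(z)$ via ranked rearrangements of rows and columns and the unfolding map; but note that the column marginals of $M$ are \emph{not} i.i.d.\ PD$(\al)$-like — their limit involves the independent row sums $S_i$ (Lemmas \ref{le:app1} and \ref{le:cols}), so the limiting tree alternates two distinct offspring laws, a point your description of the ``two-stage PWIT'' leaves vague. Second, local weak convergence only yields convergence of the \emph{expected} empirical spectral measure; to get the almost sure convergence claimed in \eqref{eq:numz1} you still need a concentration input (the paper uses the bounded-difference inequality for matrices with independent rows, \cite[Lem.~C.2]{BCChnh}), which your proposal omits. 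Your gauge-invariance argument for the dependence on $|z|$ alone is fine and is essentially equivalent to the paper's observation that each edge weight enters the path expansion only through its squared modulus.
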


The proof of Theorem \ref{th:numz} is based on the local convergence of the
$2n\times 2n$ Hermitian matrix
\begin{equation}\label{eq:bip1}
  B_z := \begin{pmatrix} 0 & M -z \\ M^\top - \bar z & 0 \end{pmatrix}
\end{equation}
to the self-adjoint operator associated to a rooted random weighted infinite
tree. In particular, the measure $\nu_{\al,z}$ will be interpreted as the
expected value of the spectral measure associated to this random rooted tree.
While this line of reasoning is entirely parallel to the arguments introduced
in \cite{BCChnh}, 
an important difference here is that the resulting tree is the outcome of a
branching process where two distinct offspring distributions alternate at each
generation; see Section \ref{se:pwit} below. In contrast with \cite{BCChnh},
the row sum normalization in $M$ introduces dependencies in the random weights
of the limiting tree, making the recursive distributional equation
characterizing the spectral measure harder to analyze; see Section
\ref{se:rde} below.

Note that in contrast with the eigenvalues, the distribution of the singular
values of $M$ has unbounded support. On the other hand, unlike the case of
singular values of i.i.d.\ heavy tailed matrices \cite{BDG09,BCChnh}, it has finite
exponential moments. 

Next, we turn to the empirical distribution of the eigenvalues of $M$:
\begin{equation}\label{eq:mu}
\mu_M:= \frac 1 n \sum_{k=1} ^n \delta_{\la_k}
\,.%
\end{equation} 

\begin{theorem}[Eigenvalues]\label{th:girko}
  If $(H1)$ and $(H2)$ hold then there exists a probability measure
  $\mu_\al$ on the unit disc $\dD$,
  depending only on $\al$,  such that almost surely
  \begin{equation}\label{eq:mu1} 
    \mu_{M} \underset{n\to\infty}{\weak} \mu_\al.
  \end{equation}
  Moreover the probability measure $\mu_\al$ is isotropic, and it is neither concentrated at zero nor at the boundary of $\dD$.  
\end{theorem}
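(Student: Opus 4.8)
\emph{Plan of proof.} I would use Girko's Hermitization together with the objective method, along the lines of \cite{BCChnh}. Recall that $\mu_M$ is characterised by its logarithmic potential $U_{\mu_M}(z)=-\int_\dC\log|z-w|\,\mu_M(\dd w)=-\tfrac1n\log|\det(M-z)|=-\int_0^\infty\log s\,\nu_{M,z}(\dd s)$, the last equality being the Hermitization identity $|\det(M-z)|^2=\det((M-z)(M-z)^*)$. By the standard convergence lemma for logarithmic potentials (see \cite{BCCm,BCChnh} and the references therein), to obtain \eqref{eq:mu1} it is enough to check that, for Lebesgue-almost every $z\in\dC$: (i) $\nu_{M,z}\weak\nu_{\al,z}$ almost surely, which is Theorem \ref{th:numz}; and (ii) the function $\log(\cdot)$ is uniformly integrable for the family $\{\nu_{M,z}\}_{n\ge1}$, almost surely; and then to verify that $z\mapsto U(z):=-\int_0^\infty\log s\,\nu_{\al,z}(\dd s)$ is the logarithmic potential of a probability measure $\mu_\al$. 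Measurability in $z$ and Fubini's theorem turn the ``for each $z$, a.s.'' statements in (i)--(ii) into the required ``a.s., for a.e.\ $z$''.

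\emph{Uniform integrability.} The part near $s=\infty$ is soft: from $\tfrac1n\sum_k s_{k,z}^2=\tfrac1n\tr((M-z)(M-z)^*)=\tfrac1n\tr(MM^*)-2\Re\!\big(\bar z\,\tfrac1n\tr M\big)+|z|^2$, together with $\tfrac1n\tr(MM^*)\to 1-\al$ and $\tfrac1n\tr M\to0$ almost surely (each row satisfies $\|R_i\|_2^2\to\sum_kP_k^2$ in law with $(P_k)$ Poisson--Dirichlet$(\al)$, $\dE\sum_kP_k^2=1-\al$, the rows are i.i.d.\ and bounded by $1$, and the $M_{ii}$ are independent with mean $1/n$), we get $\sup_n\int s^2\,\nu_{M,z}(\dd s)<\infty$ a.s., hence uniform integrability of $\log^+(\cdot)$. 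The part near $s=0$, i.e.\ controlling $\int_0^1\log(1/s)\,\nu_{M,z}(\dd s)$, is the usual and, in my view, the principal obstacle in every circular-law-type statement, and so it will be here. I would proceed as in \cite{BCChnh}: (a) a polynomial lower bound $s_{\min}(M-z)\ge n^{-C}$ with probability at least $1-n^{-2}$, obtained by writing $M-z=D^{-1}(X-zD)$ with $D=\mathrm{diag}(\rho_1,\dots,\rho_n)$, bounding $\max_i\rho_i$ polynomially via the heavy tails, and bounding $s_{\min}(X-zD)$ from below; and, more importantly, (b) a bound on the number of small singular values, $\#\{k:\ s_{k,z}\le t\}\le n\,g(t)$ with a control allowing $t=t(n)\to0$. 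For (b) I would use that the rows of $M-z$ are independent and, by (H2), have a bounded density: from $s_{\min}(M-z)\ge n^{-1/2}\min_i\dist(R_i-z e_i,H_i)$, where $H_i$ is the span of the other rows, a small-ball estimate controls each $\dist(R_i-z e_i,H_i)$ from below conditionally on $H_i$, and this is upgraded to a lower bound on intermediate singular values by the standard argument of restricting to a random subset of rows. Combined with (i) and the continuity of $t\mapsto\nu_{\al,z}([0,t])$ at $0^+$, this yields (ii) for a.e.\ $z$, a.s.

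\emph{Conclusion and isotropy.} Granting (i)--(ii), $U_{\mu_M}(z)\to U(z)$ a.s.\ for a.e.\ $z$; since every $U_{\mu_M}$ is the logarithmic potential of a probability measure carried by $\dD$, the cited lemma gives that $U$ is the logarithmic potential of a probability measure $\mu_\al$ carried by $\dD$, that $\mu_\al=-\tfrac1{2\pi}\Delta U$ in the sense of distributions (hence $\mu_\al$ is the unique such measure), and that $\mu_M\weak\mu_\al$ a.s. By Theorem \ref{th:numz}, $\nu_{\al,z}$, and therefore $U$, depends on $z$ only through $|z|$; hence $\Delta U$, and thus $\mu_\al$, is rotation invariant, i.e.\ isotropic, and it depends only on $\al$.

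\emph{Non-degeneracy.} That $\mu_\al$ is not carried by $\partial\dD$ follows from the eigenvalue/singular-value majorization $\sum_k|\la_k|^2\le\tr(MM^*)$: indeed $\int|w|^2\,\mu_M(\dd w)=\tfrac1n\sum_k|\la_k|^2\le\tfrac1n\tr(MM^*)\to 1-\al$ a.s., while $\int|w|^2\,\mu_M(\dd w)\to\int|w|^2\,\mu_\al(\dd w)$ by weak convergence and the uniform compact support, whence $\int|w|^2\,\mu_\al(\dd w)\le 1-\al<1$. That $\mu_\al\neq\delta_0$ follows because $U(0)=-\int_0^\infty\log s\,\nu_{\al,0}(\dd s)$ is finite — which is exactly the uniform integrability of $\log(\cdot)$ in (ii) at $z=0$, $\int_0^\infty|\log s|\,\nu_{\al,0}(\dd s)<\infty$ — whereas $U_{\delta_0}(0)=-\int_\dC\log|w|\,\delta_0(\dd w)=+\infty$; since the logarithmic potential determines the measure, $\mu_\al\neq\delta_0$. (Alternatively, one may exhibit a $z$ with $\int_0^\infty\log s\,\nu_{\al,z}(\dd s)\neq\log|z|$ directly from the recursive description of $\nu_{\al,z}$ in Section \ref{se:rde}.) The crux of the whole argument, as anticipated, is the small-singular-value estimate entering (ii).
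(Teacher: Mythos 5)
Your plan coincides with the paper's proof in all essential respects: Girko's Hermitization combined with Theorem \ref{th:numz}; the factorization $M-z=D\,(X-zD^{-1})$ with $D=\mathrm{diag}(\rho_i^{-1})$ plus a polynomial tail bound on $\max_i\rho_i$ for the smallest singular value; the Tao--Vu negative-second-moment / distance-to-subspace argument for the moderately small singular values; radial dependence of $\nu_{\al,z}$ for isotropy; finiteness of $U_{\mu_\al}(0)$ to rule out $\delta_0$; and a Weyl-type majorization of $\sum_k|\la_k|^2$ by $\tr(MM^*)$ to rule out concentration on $\partial\dD$.

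The one genuine gap is in your item (ii): you assert that $\log(\cdot)$ is uniformly integrable for $\{\nu_{M,z}\}_{n}$ \emph{almost surely}, but the tools you invoke only deliver uniform integrability \emph{in probability}. The negative second moment identity yields bounds of the form $\dE\bigl[\ind_{G_n}\int_0^\infty x^{-p}\,\nu_{M,z}(\dd x)\bigr]\leq C$ uniformly in $n$; by Markov this controls $\bbP\bigl(\int_{K_\de^c}|\log x|\,\nu_{M,z}(\dd x)>\veps\bigr)$, but these bounds are not summable in $n$, so Borel--Cantelli gives nothing almost sure, and your Fubini step then only converts ``for each $z$, in probability'' into ``in probability, for a.e.\ $z$''. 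As written your argument therefore proves $\mu_M\weak\mu_\al$ in probability. The missing ingredient for the almost sure statement is a concentration step: on the event (of summable complementary probability, by your step (a) with a large exponent) that $\supp\nu_{M,z}\subset[n^{-b},n^b]$, one has $U_{\mu_M}(z)=-\int f_n\,\dd\nu_{M,z}$ with $f_n=\ind_{[n^{-b},n^b]}\log$ of total variation $O(\log n)$; since the rows of $M$ are independent, the concentration inequality of \cite[Lem.~C.2]{BCChnh} gives $\bbP(|U_{\mu_M}(z)-\bbE U_{\mu_M}(z)|>\veps)\leq 2\e^{-c\veps n(\log n)^{-2}}$, and Borel--Cantelli then upgrades the convergence of $U_{\mu_M}(z)$, hence of $\mu_M$, to almost sure. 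Two smaller points: the claim $s_{\min}(M-z)\geq n^{-C}$ with probability $1-n^{-2}$ must exclude a neighborhood of $z=1$, since $(M-1)\mathbf{1}=0$ makes $s_{\min}(M-1)=0$ deterministically (the constants degenerate as $z\to1$, which is harmless for a.e.\ $z$); and the rows of $M-z$ do not have a bounded density on $\dC^n$ (they live on a translate of the simplex), so small-ball bounds for $\dist(R_i,W)$ are only available for $W$ of large codimension --- enough for the intermediate singular values, but precisely the reason the smallest singular value must be handled through the unnormalized matrix $X-zD^{-1}$, as you indeed do in (a).
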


The proof of Theorem \ref{th:girko} is based on Girko's Hermitization method
\cite{MR773436}, that is we use logarithmic potentials together with the
convergence of the singular values of $M-z$ expressed by Theorem
\ref{th:numz}; see \cite{bordenave-chafai-changchun} for a survey of the
method. A crucial point of this approach is the uniform integrability of the
logarithmic function for the measures $\nu_{M,z}$, as $n\to\infty$. Two key
facts are needed to establish this property. The first step is the proof that
for almost all $z\in\dC$ the smallest singular value of $M-z$ is bounded below
by an inverse polynomial of $n$ with high probability; see Theorem \ref{th:sn}
below. This estimate uses the assumption (H2). The second step is an
adaptation of the Tao and Vu analysis \cite{MR2409368} of the singular values
in the bulk. These steps are approached by a combination of methods introduced
in \cite{BCChnh} and \cite{BCCm}. In both cases, however, the present setting
requires a nontrivial extension of the known arguments.

We refer to Figure \ref{fi:eig} for simulation plots of the spectrum of $M$.

\begin{remark}[Edge behavior: conjectures]
  The pictures in Figure \ref{fi:eig} prompt the conjecture that the spectral
  measure $\mu_\al$ is supported on a disc $\dD_\al:=\{z\in\dC:\;|z|\leq
  r_\al\}$ for some $r_\al\in(0,1)$, where $r_\al\to 1$ as $\al\to0$ and
  $r_\al\to 0$ as $\al\to 1$. A closer look at simulations actually suggests
  an even stronger conjecture, namely that with high probability, except for
  the eigenvalue $\la_1=1$, the Markov matrix $M$ has a spectral radius
  strictly less than $1$:
  \begin{equation}\label{eq:gap} 
    \max\{|\la_2|,\ldots,|\la_n|\}\leq r_\al\,,
  \end{equation}
  for some constant $r_\al\in(0,1)$ as above. Heuristic arguments seem to suggest that $r_\al\sim\sqrt {1-\al}$. A somewhat related question is
  the long time behavior of the Markov chain with transition matrix $M$. This
  question is addressed in \cite{BCS2}; see also \cite{BCS1} for related
  recent progress concerning sparse random directed graphs.
\end{remark}

\begin{figure}[htbp]
  \includegraphics[width=.7\textwidth]{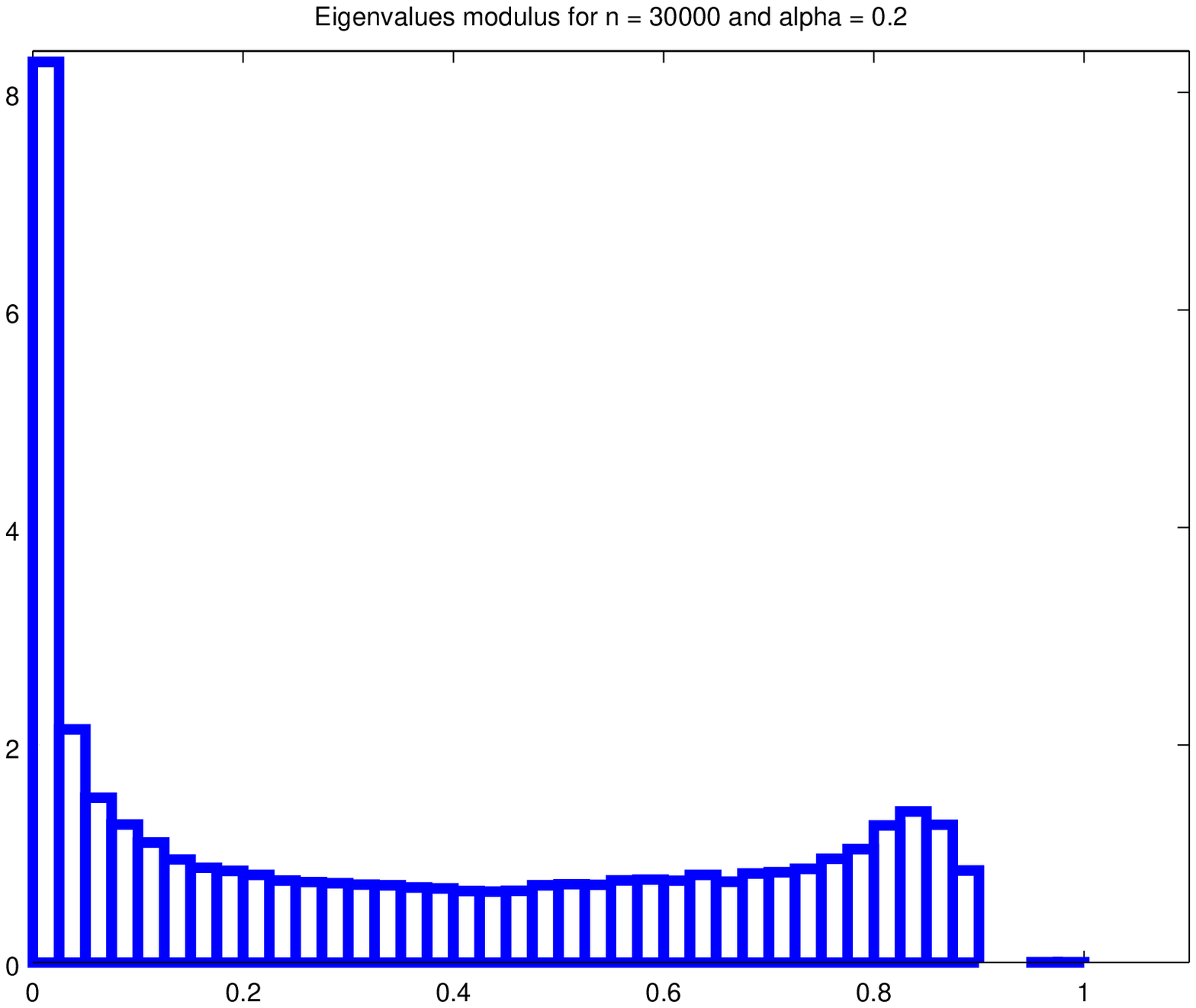}
  \includegraphics[width=.7\textwidth]{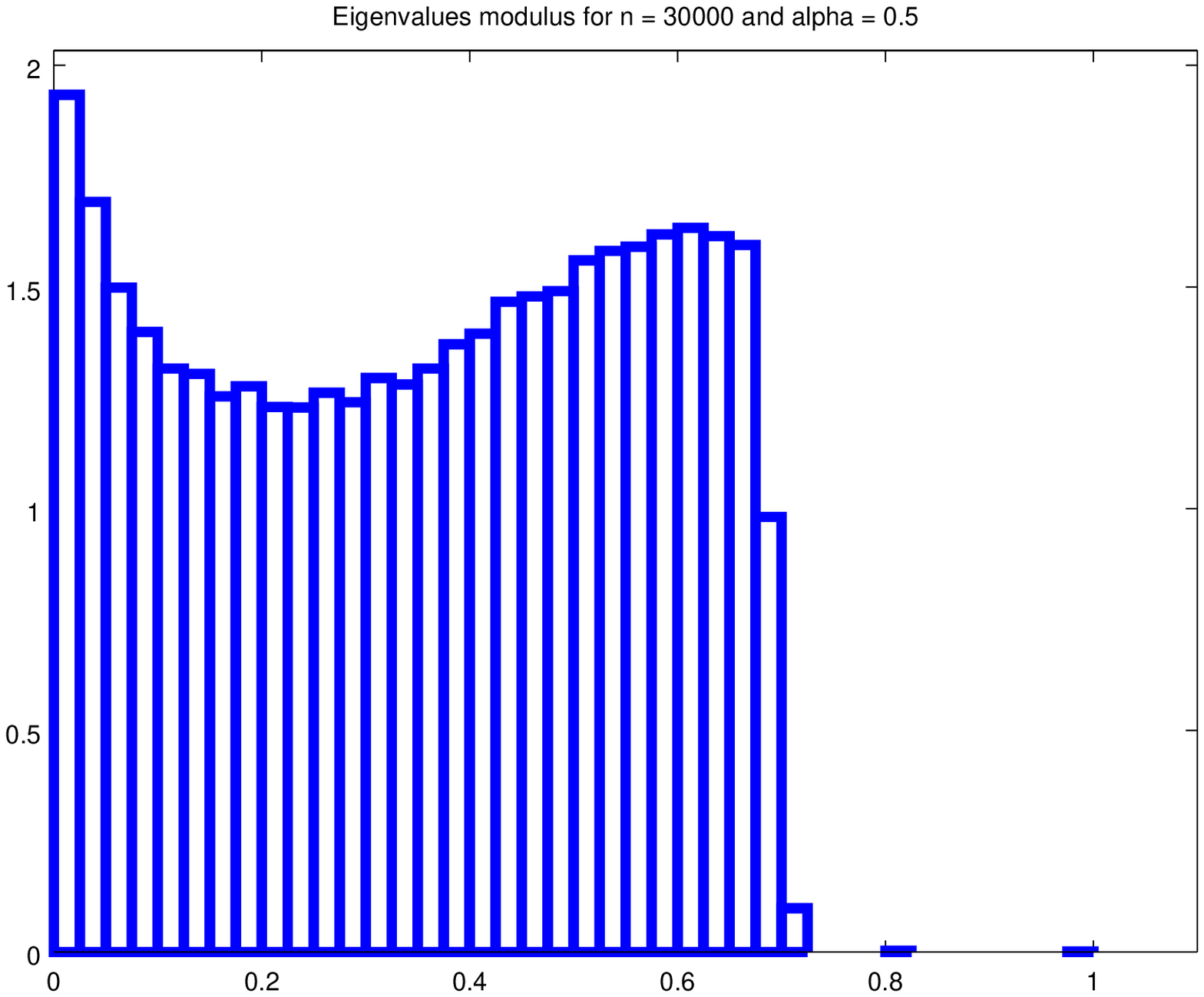}
  \includegraphics[width=.7\textwidth]{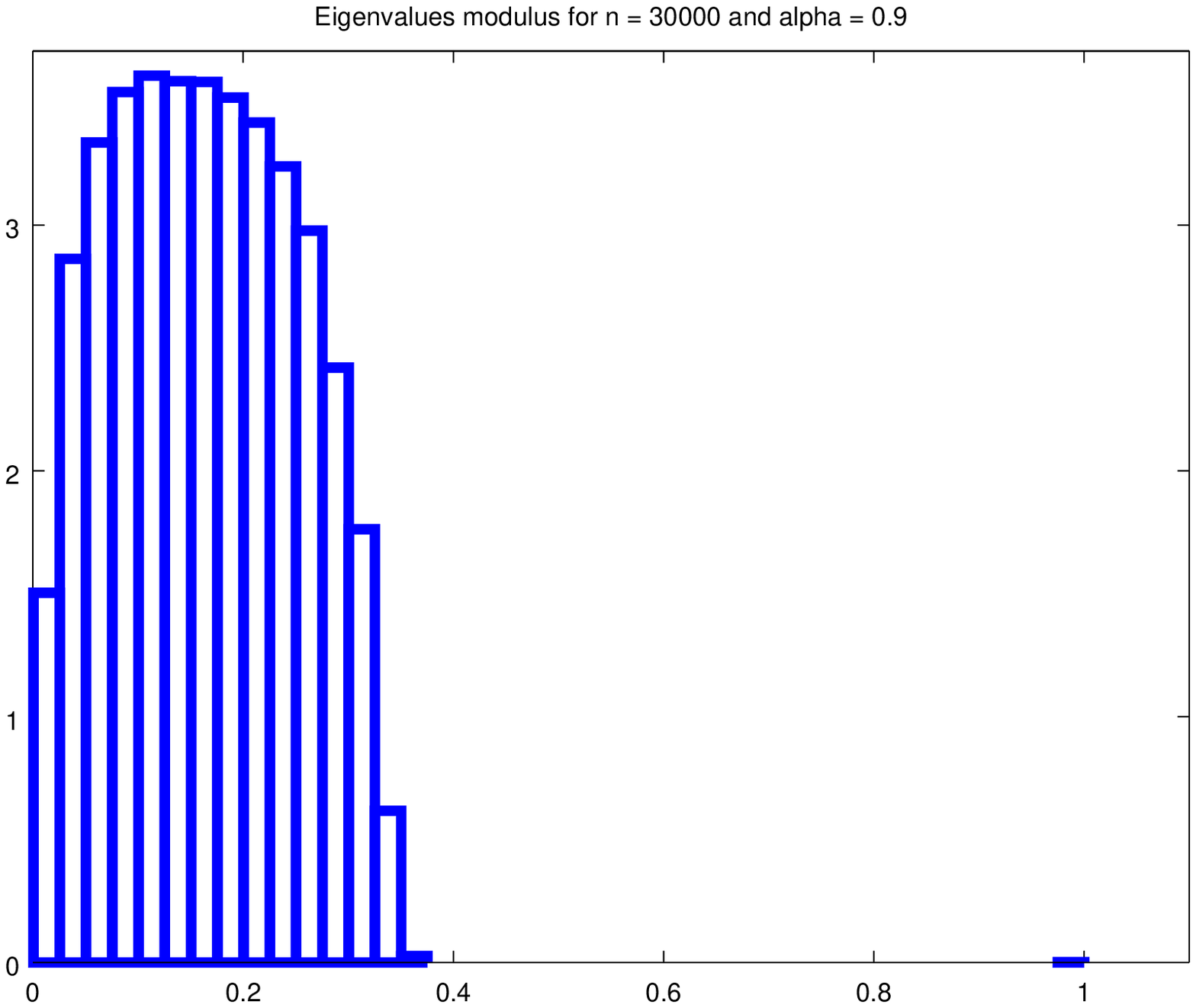}
  \caption{Histogram of the modulus of the spectrum of a single simulation of
    $M$ in dimension $n=30000$ and with tail index $\alpha$ equal to $0.2$,
    $0.5$, and $0.9$ respectively, from top to bottom. The simulation uses  $\bx= U^{-1/\al}$, where
$U$ is uniform in $[0,1]$. 
  }
  \label{fi:eig}
\end{figure}

A remark concerning the case where the variable $\bx$ satisfies (H1) with
$\al\in(1,2)$: in analogy with results from \cite{BCChr,BCChnh} we expect that
in this case the rescaled eigenvalues $ \la_k':= n^{1-1/\al}
\la_k$ satisfy
\begin{equation}\label{eq:mno}
  \frac 1 n \sum_{k=1} ^n \delta_{\la_k'}\underset{n\to\infty}{\weak} \mu_\al'
\,,%
\end{equation} 
where $ \mu_\al'$ is the isotropic probability measure with unbounded support
on $\dC$, associated to the i.i.d.\ matrix $X$; see \cite[Th.~1.2]{BCChnh}.
The case $\al=1$ should require a logarithmic correction.

The rest of this article is organized as follows. In Section \ref{se:pwit} we
obtain some preliminary properties of the model and describe the alternate
Poisson weighted tree that will be used in the proof of Theorem \ref{th:numz}. The
latter is given in Section \ref{se:conv_singular}. Finally, in Section
\ref{se:conv_spec} we prove Theorem \ref{th:girko}.

\section{Local convergence to the alternate Poisson weighted tree}
\label{se:pwit}

The key conceptual step is to associate to our matrix $M$ a limiting random
weighted tree. We follow what is by now a well established strategy; see
\cite{aldous92,aldoussteele,BorLel,BCChr}. We start with some facts on convergence to
Poisson point processes for sequences of heavy tailed random variables; see
e.g.\ \cite{Resnick} for more background.

\subsection{Convergence of rows and columns of the Markov matrix}

If $0\leq x_1\leq x_2\leq \cdots$ is distributed as the Poisson point process
with intensity $1$ on $[0,\infty)$, then
\begin{equation}\label{eq:pppa}
\xi_i=x_i^{-1/\al},
\end{equation} 
is distributed as the Poisson point process with intensity $\phi(t)=\al
t^{-\al-1}$ on $[0,\infty)$. In this case we say that the ranked vector
$\{\xi_i\}$ has law PPP($\al$). Moreover, for $\al\in(0,1)$, the variable
$S=\sum_{i=1}^\infty \xi_i$ has the one sided $\al$-stable law, with Laplace
transform
\begin{equation}\label{eq:onesided}
\bbE[\exp{(-\te S)}] = \exp{(-\G(1-\al)\te^\al)}\,,\quad \te\geq 0.
\end{equation} 
The law of the normalized ranked vector 
\begin{equation}\label{eq:pda}
  (\zeta_1,\zeta_2,\dots): = S^{-1}(\xi_1,\xi_2,\cdots),
\end{equation} 
is called the Poisson-Dirichlet law with index $\al$; see \cite{PitmanYor} for
a detailed account. We shall refer to it as PD($\al$). The next lemma
summarizes the key convergence results for the rows of the matrix $M$. We
recall that here convergence in distribution of a ranked vector coincides with
weak convergence, for every fixed $k\in\bbN$, of the joint law of the first
$k$ maxima of the vector. We write $[n]$ for the set $\{1,\dots,n\}$.

\begin{lemma}[Poisson Point Processes and Poisson Dirichlet
  distributions]\label{le:rows}
  Let $X_{i,j}$, $i,j\in[n]$ be i.i.d.\ copies of a random variable $\bx$
  satisfying (H1). Define $\rho_i=\sum_{j=1}^nX_{i,j}$ and
  $M_{i,j}=\rho_i^{-1}X_{i,j}$. Set $a_n:=(c\,n)^{1/\al}$, where $c$ is as in
  \eqref{eq:H}. Then
  \begin{enumerate}[1.]
  \item The ranked rearrangement of $\{a_n^{-1}X_{1,j},\, j\in[n]\}$ converges
    in distribution to $\{\xi_i\}$ which is a Poisson Point Process
    PPP($\al$);
  \item The random variable $a_n^{-1}\rho_1$ converges in distribution to $S$
    given in \eqref{eq:onesided};
  \item The ranked rearrangement of $\{M_{1,j}, \,j\in[n]\}$ converges in
    distribution to $\{\zeta_i\}$ which follows the Poisson-Dirichlet law
    PD($\al$).
  \end{enumerate} 
\end{lemma}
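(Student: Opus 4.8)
The plan is to reduce everything to the classical theory of regularly varying tails and Poisson point process convergence, then transfer the results through the row-sum normalization via a continuous mapping argument. First I would establish item 1. The assumption (H1) says that $\dP(\bx\ge t)=c\,t^{-\al}(1+o(1))$, so $\bx$ has a regularly varying tail of index $\al$. By the standard point process convergence criterion (see e.g.\ \cite{Resnick}), for the array $\{X_{1,j}:j\in[n]\}$ of i.i.d.\ copies, the rescaled empirical measure $\sum_{j=1}^n\delta_{a_n^{-1}X_{1,j}}$ converges in distribution, in the vague topology on $(0,\infty]$, to a Poisson point process on $(0,\infty)$ whose intensity measure $\mu$ is determined by $\mu((t,\infty))=\lim_n n\,\dP(X_{1,1}>a_nt)=\lim_n n\,c\,(a_nt)^{-\al}=t^{-\al}$, using $a_n=(cn)^{1/\al}$. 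Hence $\mu(\dd t)=\al t^{-\al-1}\dd t$, which is exactly the intensity $\phi$ of PPP($\al$). Reading off the ranked points of this random measure gives the ranked rearrangement of $\{a_n^{-1}X_{1,j}\}$ and its convergence (in the sense of joint convergence of the first $k$ maxima, which is the content of vague convergence here) to the ranked vector $\{\xi_i\}$ of the limiting PPP. This is item 1.

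For item 2, I would argue that $a_n^{-1}\rho_1=\sum_{j=1}^n a_n^{-1}X_{1,j}$ is, after centering is seen to be unnecessary since $\al<1$, precisely a row-sum in the domain of attraction of the one-sided $\al$-stable law. Concretely, with $a_n=(cn)^{1/\al}$ the classical stable limit theorem for non-negative i.i.d.\ summands with tail $\dP(\bx\ge t)\sim c\,t^{-\al}$, $\al\in(0,1)$, gives $a_n^{-1}\rho_1\weak S$ where $S$ has Laplace transform $\exp(-\G(1-\al)\te^\al)$; this is consistent with the representation $S=\sum_i\xi_i$ for $\{\xi_i\}\sim$ PPP($\al$), whose Laplace transform one computes from the Lévy–Khinchine formula for the Poisson integral, $\dE[\e^{-\te S}]=\exp\big(-\int_0^\infty(1-\e^{-\te t})\al t^{-\al-1}\dd t\big)=\exp(-\G(1-\al)\te^\al)$. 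So item 2 follows either by citing the stable CLT directly or, in keeping with the objective-method philosophy, by noting that the summation functional $\{y_i\}\mapsto\sum_i y_i$ is a.s.\ continuous at the limiting configuration (the sum is a.s.\ finite and there is no mass escaping to $0$ that matters, since $\int_0^1 t\,\al t^{-\al-1}\dd t<\infty$ for $\al<1$) and applying the continuous mapping theorem to the convergence in item 1.

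Item 3 is then obtained by combining items 1 and 2 through a joint convergence statement. The key point is that $(\{a_n^{-1}X_{1,j}\}_{j},\,a_n^{-1}\rho_1)$ converges jointly in distribution to $(\{\xi_i\},S)$ with $S=\sum_i\xi_i$ — and this joint convergence is automatic precisely because $S$ is a deterministic continuous functional of $\{\xi_i\}$, so there is no coupling issue. Then $\{M_{1,j}\}_j=\{a_n^{-1}X_{1,j}/(a_n^{-1}\rho_1)\}_j$ converges, by the continuous mapping theorem applied to $(\{y_i\},s)\mapsto\{y_i/s\}$ (continuous on the event $s>0$, which has full limiting probability), to $\{\xi_i/S\}=\{\zeta_i\}$, which is by definition PD($\al$). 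The ranked rearrangement is preserved since division by the positive scalar $S$ is monotone.

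The main obstacle is making the joint-convergence step in item 3 fully rigorous: one must be careful that the functional $\{y_i\}\mapsto\sum_i y_i$, though a.s.\ continuous at PPP($\al$) configurations in the appropriate topology, is not continuous as a map on the space of locally finite point measures on $(0,\infty]$ in the vague topology — small points can contribute a non-negligible sum. The clean way around this, which I would adopt, is to use the fact that for $\al\in(0,1)$ the contribution of the small points is uniformly controlled: $\sum_{j:\,a_n^{-1}X_{1,j}\le\veps}a_n^{-1}X_{1,j}$ has expectation $n a_n^{-1}\dE[\bx\ind_{\bx\le a_n\veps}]=O(\veps^{1-\al})$ uniformly in $n$, so the truncated sums converge (by item 1 and the continuous mapping theorem on the truncated functional, which \emph{is} vaguely continuous) and the truncation error is uniformly small; letting $\veps\to0$ gives item 2 together with the joint convergence needed for item 3. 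This is a routine but essential tightness-of-the-tail argument, and it is the only place where the hypothesis $\al<1$ (finiteness of the first moment restricted to bounded sets, i.e.\ convergence of $\int_0^1 t\,\phi(t)\,\dd t$) is genuinely used.
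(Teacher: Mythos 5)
Your proposal is correct. The paper does not prove this lemma itself --- it simply cites \cite[Lem.~2.4]{BCChr} as ``well known'' --- and your argument is precisely the standard route taken there: vague convergence of the rescaled empirical point measure to PPP($\al$) via the tail criterion $n\,\dP(\bx>a_nt)\to t^{-\al}$, a truncation at level $\veps$ to circumvent the non-continuity of the summation functional (with the error $n a_n^{-1}\dE[\bx\ind_{\bx\le a_n\veps}]=O(\veps^{1-\al})$ uniform in $n$, which is where $\al<1$ enters), and the continuous mapping theorem for the joint convergence needed in item 3. You correctly identify and repair the one genuine pitfall (the summation functional is not vaguely continuous), so nothing is missing.
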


The above lemma is well known; we refer e.g.\ to \cite[Lem.~2.4]{BCChr} for a
proof. We turn to column vectors. Here the convergence result is less
immediate. We start with an auxiliary lemma. While Lemma \ref{le:rows} does
not require the full strength of the assumption (H1), the usual regular
variation assumption being sufficient \cite{BCChr}, below we do use (H1) in a
more stringent way.

\begin{lemma}
\label{le:app1} 
  In the setting of Lemma \ref{le:rows}, for all fixed $t>0$ one has
  \begin{equation}\label{eq:app11}
    \lim_{n\to\infty}n\,\bbP\left(
      X_{1,1}>\tfrac{t}{1+t}\,\rho_1\right) = \g\,t^{-\al},
  \end{equation}
  where $\g$ is the constant 
  \[
  \g=\g(\al):=\frac1{\G(1+\al)\G(1-\al)}.
  \]
\end{lemma}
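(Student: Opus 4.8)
The plan is to condition on the sum of the remaining entries of the first row and reduce the statement to a negative-moment computation for the one-sided stable variable $S$ of \eqref{eq:onesided}. Write $\rho_1=X_{1,1}+R$ with $R:=\sum_{j=2}^nX_{1,j}$; then $R$ is a sum of $n-1$ i.i.d.\ copies of $\bx$, independent of $X_{1,1}$, and the elementary identity $\{X_{1,1}>\tfrac{t}{1+t}\rho_1\}=\{X_{1,1}>tR\}$ together with conditioning on $R$ gives
\begin{equation}\label{eq:plan1}
  n\,\bbP\Bigl(X_{1,1}>\tfrac{t}{1+t}\rho_1\Bigr)=n\,\bbE\bigl[\bar F(tR)\bigr],\qquad\bar F(s):=\bbP(\bx>s).
\end{equation}
By (H1), $s^\al\bar F(s)\to c$ as $s\to\infty$; moreover $a_n^{-1}X_{1,1}\to0$ in probability (since $\bbP(a_n^{-1}X_{1,1}>\veps)=\bar F(\veps a_n)\to0$), so by Lemma~\ref{le:rows} one has $a_n^{-1}R\weak S$ and $R$ is typically of order $a_n\to\infty$. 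Heuristically then $\bar F(tR)\approx c\,(tR)^{-\al}$, and since $(a_n^{-1}R)^{-\al}=cn\,R^{-\al}$ this yields
\begin{equation}\label{eq:plan2}
  n\,\bbE\bigl[\bar F(tR)\bigr]\approx c\,t^{-\al}\,n\,\bbE\bigl[R^{-\al}\bigr]=t^{-\al}\,\bbE\bigl[(a_n^{-1}R)^{-\al}\bigr]\;\longrightarrow\;t^{-\al}\,\bbE\bigl[S^{-\al}\bigr].
\end{equation}
Finally, inserting $S^{-\al}=\G(\al)^{-1}\int_0^\infty\te^{\al-1}\e^{-\te S}\dd\te$ into the expectation and using \eqref{eq:onesided},
\begin{equation}\label{eq:plan3}
  \bbE\bigl[S^{-\al}\bigr]=\frac1{\G(\al)}\int_0^\infty\te^{\al-1}\e^{-\G(1-\al)\te^\al}\,\dd\te=\frac1{\al\,\G(\al)\,\G(1-\al)}=\frac1{\G(1+\al)\,\G(1-\al)}=\g,
\end{equation}
the middle integral being computed by the substitution $u=\G(1-\al)\te^\al$. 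This is exactly the constant in \eqref{eq:app11}.

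To make \eqref{eq:plan2} rigorous I would fix a small $\de_0>0$ with $p:=\bbP(\bx>\de_0)>0$ and a small $\eta>0$, and split $\{X_{1,1}>tR\}$ according to $R\le\de_0$, $\de_0<R\le\eta a_n$, or $R>\eta a_n$. On $\{R\le\de_0\}$ every $X_{1,j}$, $2\le j\le n$, is $\le\de_0$, so this event has probability at most $(1-p)^{n-1}$ and contributes $o(1)$ to \eqref{eq:plan1}. On $\{R>\eta a_n\}$ the argument of $\bar F$ in \eqref{eq:plan1} tends to $\infty$ uniformly, whence $n\bar F(tR)=(1+o(1))\,t^{-\al}(a_n^{-1}R)^{-\al}$ with a deterministic $o(1)$; since $x\mapsto x^{-\al}\ind_{x>\eta}$ is bounded and continuous off the $S$-null set $\{\eta\}$, the portmanteau theorem gives $n\,\bbE[\bar F(tR)\ind_{R>\eta a_n}]\to t^{-\al}\,\bbE[S^{-\al}\ind_{S>\eta}]$, which increases to $\g\,t^{-\al}$ as $\eta\downarrow0$ by monotone convergence and \eqref{eq:plan3}. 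Using $\bar F(s)\le C_t\,s^{-\al}$ for $s\ge t\de_0$ with $C_t:=\sup_{s\ge t\de_0}s^\al\bar F(s)<\infty$, the middle term is at most $\tfrac{C_t}{c}\,t^{-\al}\,\bbE\bigl[(a_n^{-1}R)^{-\al}\ind_{\de_0/a_n<a_n^{-1}R\le\eta}\bigr]$, and one must show this is negligible in the limit $n\to\infty$ followed by $\eta\downarrow0$.

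This last point is the main obstacle: a uniform-in-$n$ control of the contribution of atypically small values of $R$. One cannot simply invoke uniform integrability of $(a_n^{-1}R)^{-\al}$, which may have infinite expectation when $\bbP(\bx=0)>0$; instead the key input is the lower-tail estimate $\bbP(R\le v)\le(1-\bar F(v))^{n-1}$ (because $\{R\le v\}\subseteq\bigcap_{j=2}^n\{X_{1,j}\le v\}$), combined with the uniform bound $\bar F(ua_n)\ge c_*\min(u^{-\al}/n,1)$ for $u\in(0,\eta]$ and $n$ large, which follows from (H1) and $\bar F>0$ everywhere; together they give $\bbP(R\le ua_n)\le(1-c_*\min(u^{-\al}/n,1))^{n-1}$. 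A layer-cake evaluation then bounds $\bbE\bigl[(a_n^{-1}R)^{-\al}\ind_{\de_0/a_n<a_n^{-1}R\le\eta}\bigr]$ by a quantity of the form $C(\al,\de_0)\,\eta^{-\al}\e^{-c_*\eta^{-\al}/2}+o_n(1)+\al\int_0^\eta\e^{-c_*u^{-\al}/2}u^{-\al-1}\,\dd u$, whose $\limsup_n$ tends to $0$ as $\eta\downarrow0$, which closes the estimate. A more conceptual route to the value \eqref{eq:plan3}, still requiring the same lower-tail bound to justify passing to the limit, is the Mecke (Palm) formula for $\mathrm{PPP}(\al)$: writing the left-hand side of \eqref{eq:app11} as $\bbE\bigl[\#\{i:\xi^{(n)}_i>\tfrac{t}{1+t}\textstyle\sum_j\xi^{(n)}_j\}\bigr]$ with $\xi^{(n)}$ the ranked rearrangement of $\{a_n^{-1}X_{1,j}\}_j$ and passing to the limit $\xi^{(n)}\to\{\xi_i\}$ of Lemma~\ref{le:rows}, the Mecke equation yields $\int_0^\infty\bbP(u>tS')\,\al u^{-\al-1}\,\dd u=\bbE\bigl[(tS')^{-\al}\bigr]=\g\,t^{-\al}$ with $S'$ an independent copy of $S$.
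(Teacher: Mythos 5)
Your proposal is correct, and its skeleton coincides with the paper's: both pass to $\{X_{1,1}>t\hat\rho_1\}$ with $\hat\rho_1=\sum_{j\ge2}X_{1,j}$ independent of $X_{1,1}$, both reduce the limit to $c\,t^{-\al}\,\bbE[S^{-\al}]$ via the weak convergence $a_n^{-1}\hat\rho_1\weak S$, both evaluate $\bbE[S^{-\al}]=\g$ by the identical Gamma-integral/Laplace-transform computation using \eqref{eq:onesided}, and both isolate the same three regimes (exponentially unlikely tiny $\hat\rho_1$, the delicate window $\de_0<\hat\rho_1\le\eta a_n$, and the bulk $\hat\rho_1>\eta a_n$ handled by portmanteau plus uniform convergence of $s^{\al}\bbP(\bx>s)$). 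Where you genuinely diverge is the one step that carries all the difficulty, namely the uniform-in-$n$ negligibility of $\bbE\bigl[(a_n^{-1}\hat\rho_1)^{-\al}\ind_{\{\de_0<\hat\rho_1\le\eta a_n\}}\bigr]$ as $\eta\downarrow0$. The paper handles this by stochastically dominating $\bx$ from below by $\eta DS$ with $D$ Bernoulli (quoting \cite[Lem.~3.5]{BCChnh}), so that after a Chernoff bound $a_n^{-1}\hat\rho_1$ dominates a fixed multiple of $S$ by stability, and the claim reduces to $\int_0^{\veps}x^{-\al}\mu(\dd x)\to0$. You instead use the elementary i.i.d.\ lower-tail bound $\bbP(\hat\rho_1\le v)\le(1-\bbP(\bx>v))^{n-1}$ together with the lower bound $\bbP(\bx>ua_n)\geq c_*\min(u^{-\al}/n,1)$ forced by (H1), and a layer-cake integration; this is self-contained, avoids the external domination lemma and the stability of $S$, and your claimed bounds do close (the resulting $\int_0^\eta \e^{-c_*u^{-\al}/2}\al u^{-\al-1}\dd u$ is finite and vanishes as $\eta\downarrow0$). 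The trade-off is essentially aesthetic: the paper's route recycles a lemma already needed elsewhere in \cite{BCChnh}, while yours is more elementary and makes transparent exactly which feature of (H1) (a two-sided control of the tail at all scales above a constant) is being used.
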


\begin{proof}
  For $u> 0$, write $ \bbP(\bx>u)=L(u)u^{-\al}$, for some function $L(u)$ such
  that $L(u)\to c$, $u\to\infty$, as in \eqref{eq:H}. Denote by $\mu_n$ the
  law of the variable $a_n^{-1}\hat\rho_1$, where $\hat
  \rho_1:=\sum_{i=2}^nX_{1,i}$. Since $X_{1,1}>\tfrac{t}{1+t}\,\rho_1$ is
  equivalent to $X_{1,1}>t\hat \rho_1$, by the independence of $X_{1,1}$ and
  $\hat\rho_1$ one has
  \begin{equation}\label{eq:asyo3}
    n\,\bbP\left(
      X_{1,1}>\tfrac{t}{1+t}\,\rho_1\right)  = n\,\bbP\left(X_{1,1}>t  \hat\rho_1\right)
        =c^{-1}t^{-\al}\int_0^\infty s^{-\al}L(a_n ts) \mu_n(\dd s).
  \end{equation}
  From Lemma \ref{le:rows} we know that $\mu_n$ converges weakly to $\mu$, the
  law of $S$ in \eqref{eq:onesided}. We are going to prove that for any $t>0$:
  \begin{equation}\label{eq:trun3}
    \lim_{n\to\infty}c^{-1}\int_0^\infty s^{-\al}L(a_n ts) \mu_n(\dd s) = \int_0^\infty s^{-\al} \mu(\dd s).
  \end{equation} 
  Notice that \eqref{eq:trun3} proves \eqref{eq:app11} since by direct computation,
  using \eqref{eq:onesided} and $\al\G(\al)=\G(1+\al)$,
  \begin{align*}
    \int_0^\infty s^{-\al} \mu(\dd s)
    &= \frac1{\G(\al)}\int_0^\infty\dd x\,x^{\al-1}\int_0^\infty \e^{-xs} \mu(\dd s)\\ 
    &= \frac1{\G(\al)}\int_0^\infty\dd x\, x^{\al-1}\e^{-x^\al\G(1-\al)} = \g.
 \end{align*}

 To prove \eqref{eq:trun3}, fix $\veps>0$ and start by observing that
 \begin{equation}\label{eq:asy3}
   \lim_{n\to\infty}\int_\veps^\infty s^{-\al} L(a_n ts)\mu_n(\dd s) %
   = c \int_\veps^\infty s^{-\al} \mu(\dd s),
 \end{equation}
 Indeed, $|L(a_n ts)-c|\to 0$ as $n\to\infty$, uniformly in $s\geq \veps$. Also,
 $\int_\veps^\infty s^{-\al} \mu_n(\dd s)\to \int_\veps^\infty s^{-\al} \mu(\dd s)$ by weak
 convergence. This implies \eqref{eq:asy3}.
 
 Next, we show that for any constant $K>0$, 
  \begin{equation}\label{eq:asy4}
   \lim_{n\to\infty}\int_0^{Ka_n^{-1} }s^{-\al} L(a_n ts)\mu_n(\dd s) =0.
 \end{equation}
 The obvious bound $L(u)\leq u^\al$, $u> 0$, yields $ L(a_n t s) \leq c s^\al t^\al n$. Therefore,
 \begin{equation}\label{eq:asy5}
   \int_0^{Ka_n^{-1} }s^{-\al} L(a_n ts)\mu_n(\dd s) \leq c\,t^\al n\int_0^{Ka_n^{-1} }\mu_n(\dd s).
 \end{equation}
 Now, $\int_0^{Ka_n^{-1} }\mu_n(\dd s)=\bbP(\hat\rho_1 \leq K)$, and $\hat \rho_1\geq
 \max_{i=2,\dots, n} X_{1,i}$, so that
 \begin{equation}\label{eq:asy6}
   \int_0^{Ka_n^{-1} }\mu_n(\dd s)\leq \bbP(\bx \leq  K )^n,
 \end{equation}
 which decays exponentially in $n$ for any fixed $K>0$. This, together with
 \eqref{eq:asy5}, implies \eqref{eq:asy4}.

 Finally,  consider the integral
 \begin{equation}\label{eq:asy7}
   \int_{Ka_n^{-1} }^\veps s^{-\al} L(a_n ts)\mu_n(\dd s) .
 \end{equation}
 Since $s\geq K/a_n$, \eqref{eq:H} shows that $L(a_n ts)\leq 2c$ if $K=K(t)$
 is large enough. Therefore \eqref{eq:asy7} is bounded by
 \[
 2c \int_{Ka_n^{-1}}^\veps s^{-\al} \mu_n(\dd s)\leq 2c \int_{Ka_n^{-1}}^\veps s^{-\al} \mu_n(\dd s).
 \]
 Thus, the proof of \eqref{eq:trun3} is complete once we show
 \begin{equation}\label{eq:asym}
   \lim_{\veps\to 0}\limsup_{n\to\infty} \int_{Ka_n^{-1}}^\veps s^{-\al} \mu_n(\dd s) =0\,.
 \end{equation}

 From (H1) and \cite[Lem.~3.5]{BCChnh}, there exists $\eta>0$, $p\in(0,1)$ such
 that $\bx$ dominates stochastically the product $\eta DS$ where $D$ is a
 Bernoulli($p$) variable independent of $S$ defined in \eqref{eq:onesided}.
 Thus $a_n^{-1}\hat\rho_1$ stochastically dominates $\hat S_n:=
 a_n^{-1}(D_1S_1+\cdots D_{n-1}S_{n-1} )$ where $\{D_i\}$ and $\{S_i\}$ are
 independent, $D_i$ are i.i.d.\ copies of $D$, $S_i$ are i.i.d.\ copies of
 $S$. If $\hat\mu_n$ denotes the law of $\hat S_n$, then $\hat\mu_n =
 \sum_{k=0}^{n-1} p(k,n) \nu_k$ where $p(k,n)=\binom{n-1}{k}
 p^k(1-p)^{n-1-k}$, and $\nu_k$ is the law of $\eta a_n^{-1}(S_1+\dots+S_k)$.
 By stability $S_1+\dots+S_k$ has the same law of $k^{1/\al} S$, so that
 $\nu_k$ is the law of $\eta(k/cn)^{1/\al}S$. Let $E$ be the event that
 $D_1+\cdots+D_{n-1} \geq pn/2$. We estimate
 \begin{equation}\label{eq:asym1}
   \int_{Ka_n^{-1}}^\veps s^{-\al} \mu_n(\dd s ) %
   \leq (K/a_n)^{-\al}\bbP(E^c) + \sum_{k=pn/2}^{n-1} p(k,n)
   \int_{Ka_n^{-1}}^\veps s^{-\al} \nu_k(\dd s).
 \end{equation}
 $\bbP(E^c)$ decays to zero exponentially in $n$ from the Chernoff bound. Thus
 the first term above vanishes in the limit $n\to\infty$. We now consider the
 second term. For any $k\in[pn/2,n-1]$ one has that $\nu_k$ is the law of $\la
 S$, for a constant $\la=\eta (k/cn)^{1/\al}$ such that $\la\in[a,b]$ for some
 constants $0<a<b<\infty$. Then, uniformly in $k\in[pn/2,n-1]$,
 \[
 \int_{Ka_n^{-1}}^\veps s^{-\al} \nu_k(\dd s)\leq a^{-\al}\int_{0}^{\veps/a} x^{-\al} \mu(\dd x).
 \] 
 Since $\int_{0}^{\veps} x^{-\al} \mu(\dd x)\to 0$ as $\veps\to 0$, \eqref{eq:asym1}
 implies \eqref{eq:asym}. This ends the proof of \eqref{eq:trun3}. 
\end{proof}

We can now state the main results concerning convergence of columns of $M$.

\begin{lemma}[Ranked rearrangements]\label{le:cols} 
  In the setting of Lemma \ref{le:rows}, let $\wt X_{j,1}$, $j\in[n]$ denote
  the ranked rearrangement of $\{X_{j,1}, \,j\in[n]\}$, write $\pi =\pi^n$ for
  the permutation of $[n]$ such that $\wt X_{i,1} = X_{\pi(i),1}$ for all
  $i\in[n]$ (use e.g.\ lexicographic order to break ties if necessary), and
  define $\hat M_{j,1} := M_{\pi(j),1}$.
  \begin{enumerate}[1.]
  \item For any $k\in\bbN$, $(\hat M_{1,1} ,\dots, \hat M_{k,1})$ converges in distribution to 
    $(\hat\om_1,\dots,\hat\om_k)$, defined by
    \begin{equation}\label{eq:homegas}
      \hat\om_i:=\frac{\xi_i}{\xi_i + S_i}\,,
    \end{equation}  
    where $\{\xi_i\}$ is the sequence in \eqref{eq:pppa} with law PPP($\al$), and
    $\{S_i\}$ is an independent sequence of i.i.d.\ copies of the random
    variable $S$ in \eqref{eq:onesided}.
  \item The ranked rearrangement of $\{M_{j,1}, \,j\in[n]\}$ converges in
    distribution to the ranked rearrangement of $\{\hat\om_i\}$. The latter, in
    turn has the same law of the ranked sequence
    \begin{equation}\label{eq:omegas}
      \om_i:=\frac{\xi_i}{\xi_i + q}\,,
    \end{equation}   
    with $\{\xi_i\}$ as above, and $q$ is the constant
    $q=\g^{-1/\al}=(\G(1+\al)\G(1-\al))^{1/\al}$.
  \end{enumerate}
\end{lemma}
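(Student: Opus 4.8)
The plan is to prove part~1 by a conditional-independence argument together with the continuous mapping theorem, then upgrade it to part~2 via a tail-negligibility estimate, and finally identify the limit by a Poisson mapping computation that recycles a moment already computed inside the proof of Lemma~\ref{le:app1}. For part~1, write $\hat M_{j,1}=\wt X_{j,1}/(\wt X_{j,1}+\hat\rho_{\pi(j)})$ with $\hat\rho_m:=\rho_m-X_{m,1}=\sum_{l=2}^nX_{m,l}$. The key point is that, conditionally on the first column (hence on $\pi$ and on the ranked values $\wt X_{j,1}$), the variables $\hat\rho_{\pi(1)},\dots,\hat\rho_{\pi(k)}$ are i.i.d.\ copies of $\hat\rho_1$ and are independent of the first column, since for distinct $j$ they are sums over disjoint sets of entries lying in columns $2,\dots,n$. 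By Lemma~\ref{le:rows}(1) applied to the first column, $(a_n^{-1}\wt X_{j,1})_{j\le k}$ converges in distribution to $(\xi_j)_{j\le k}$, and by Lemma~\ref{le:rows}(2) together with $a_n^{-1}X_{1,1}\to0$, $a_n^{-1}\hat\rho_1$ converges in distribution to $S$. Integrating out the $\hat\rho$'s first (their conditional law being a deterministic product) gives joint convergence of $(a_n^{-1}\wt X_{j,1},a_n^{-1}\hat\rho_{\pi(j)})_{j\le k}$ to $((\xi_j)_{j\le k},(S_j)_{j\le k})$ with the $\xi$'s independent of the $S$'s; since $(t,s)\mapsto t/(t+s)$ is continuous off the origin and $\xi_j,S_j>0$ almost surely, the continuous mapping theorem yields part~1.

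For part~2, the ranked rearrangement of $\{M_{j,1}\}_{j\le n}$ equals that of $\{\hat M_{j,1}\}_{j\le n}$. To go from part~1 to convergence of ranked vectors I would establish, for each $\veps>0$, the tail-negligibility estimate $\lim_{K\to\infty}\limsup_{n\to\infty}\bbP(\max_{K<j\le n}\hat M_{j,1}>\veps)=0$. If $\hat M_{j,1}>\veps$ for some $j>K$, the row $m=\pi(j)$ satisfies $\tfrac{\veps}{1-\veps}\hat\rho_m<X_{m,1}\le\wt X_{K,1}$, hence $\hat\rho_m<\tfrac{1-\veps}{\veps}\wt X_{K,1}$; since $a_n^{-1}\wt X_{K,1}$ converges in distribution to $\xi_K$ with $\xi_K\to0$, up to an error negligible as $K\to\infty$ one may replace $\wt X_{K,1}$ by $\de a_n$ with $\de$ small. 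The expected number of rows $m$ with $\tfrac{\veps}{1-\veps}\hat\rho_m<X_{m,1}\le\de a_n$ equals $n\,\bbP(X_{1,1}>\tfrac{\veps}{1-\veps}\hat\rho_1,\ \hat\rho_1<\tfrac{1-\veps}{\veps}\de a_n)$, and conditioning on $\hat\rho_1$ with $\bbP(\bx>u)=L(u)u^{-\al}$ reduces this, up to a positive constant depending on $\veps$, to $\int_0^{(1-\veps)\de/\veps}L(\tfrac{\veps}{1-\veps}a_ns)\,s^{-\al}\,\mu_n(\dd s)$, controlled exactly as in the proof of Lemma~\ref{le:app1}: the part below $Ra_n^{-1}$ by the crude bound $L(u)\le u^\al$ and \eqref{eq:asy4}, the part above by \eqref{eq:asym} since $L\le2c$ there. (Lemma~\ref{le:app1} with $t=\veps/(1-\veps)$ also shows that the total number of rows with $M_{m,1}>\veps$ has bounded expectation.) Granted this, for $K$ large the $m$ largest entries of $\{\hat M_{j,1}\}_{j\le n}$ coincide, with probability tending to $1$, with the $m$ largest entries of the finite vector $(\hat M_{1,1},\dots,\hat M_{K,1})$ (using $\hat\om_1,\dots,\hat\om_m>0$ a.s.); then part~1, the continuity of sort-and-truncate, and letting $K\to\infty$ with $\hat\om_i\to0$ a.s.\ (which follows from $\xi_i\to0$ and Borel--Cantelli applied to $\bbP(S_i<i^{-\de})$) give the claimed convergence.

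To identify the limiting law, by the marking theorem $\{(\xi_i,S_i)\}$ is a Poisson point process on $(0,\infty)^2$ with intensity $\al t^{-\al-1}\,\dd t\otimes\mu(\dd s)$, where $\mu$ is the law of $S$, so its image under $(t,s)\mapsto w=t/(t+s)$ is a Poisson point process on $(0,1)$ whose intensity, after the substitution $t=sw/(1-w)$, equals $\al\,\bbE[S^{-\al}]\,w^{-\al-1}(1-w)^{\al-1}\,\dd w$. The same computation with the constant $q$ in place of $s$ shows that $\{\om_i\}=\{\xi_i/(\xi_i+q)\}$ is a Poisson point process with intensity $\al q^{-\al}w^{-\al-1}(1-w)^{\al-1}\,\dd w$. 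These intensities agree precisely when $q^{-\al}=\bbE[S^{-\al}]$, and the computation already carried out inside the proof of Lemma~\ref{le:app1} gives $\bbE[S^{-\al}]=\int_0^\infty s^{-\al}\mu(\dd s)=\g$, hence $q=\g^{-1/\al}=(\G(1+\al)\G(1-\al))^{1/\al}$. Two Poisson point processes with the same intensity have the same law, and since this intensity is finite on each $[\veps,1)$, both ranked rearrangements are well defined and equal in law.

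The one genuinely delicate step is the tail-negligibility estimate: the danger is rows with atypically small $\hat\rho_m$, for which a merely moderate column entry already yields $M_{m,1}>\veps$, and handling these forces a re-run of the small-$s$ truncation bounds \eqref{eq:asy4}--\eqref{eq:asym} of Lemma~\ref{le:app1}. Everything else is soft: a conditional-independence argument plus the continuous mapping theorem for part~1, and a Poisson-mapping computation recycling the moment $\bbE[S^{-\al}]=\g$ for the identification in part~2.
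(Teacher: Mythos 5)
Your proof is correct, and parts of it coincide with the paper's argument: part~1 is proved there in exactly the same way (writing $\hat M_{j,1}=X_{\pi(j),1}/(X_{\pi(j),1}+\hat\rho_{\pi(j)})$ and exploiting the independence of $\{X_{\pi(j),\ell},\ \ell\neq1\}$ from the first column), and your tail-negligibility estimate is the paper's bound \eqref{eq:omegasi}, reduced in both cases to controlling $n\,\bbP(\hat\rho_1<tX_{1,1}<\veps a_n)$ via the truncation estimates \eqref{eq:asy4} and \eqref{eq:asym} from Lemma~\ref{le:app1}. Where you genuinely diverge is the identification of the limiting law. The paper stays at finite $n$: it sets $W_j=X_{j,1}/\hat\rho_j$, observes these are i.i.d., uses Lemma~\ref{le:app1} to get $n\,\bbP(W_1>t)\to(qt)^{-\al}$, and invokes the classical criterion (Resnick) so that the ranked $\{qW_j\}$ converge to PPP($\al$); the identity of the two limits then follows because the ranked $\{M_{j,1}\}$ converge to both. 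You instead work entirely in the limit, applying the marking and mapping theorems to show that $\{\hat\om_i\}$ and $\{\om_i\}$ are Poisson point processes on $(0,1)$ with the common intensity $\al\,\g\,w^{-\al-1}(1-w)^{\al-1}\,\dd w$, the matching of constants coming from $\bbE[S^{-\al}]=\g=q^{-\al}$, which is indeed the moment computed inside the proof of Lemma~\ref{le:app1}. Both routes are sound; the paper's avoids having to recognize the ranked $\{\hat\om_i\}$ as a Poisson process at all, while yours is more structural and yields the explicit intensity of the limiting point process as a by-product, which the paper never writes down.
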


\begin{proof}
  Define $\hat \rho_j:= \sum_{i=2}^nX_{j,i}$,  so that 
  \begin{equation}\label{eq:truno}
    M_{j,1} = \frac{X_{j,1}}{X_{j,1}+\hat\rho_j}. 
  \end{equation} 
  It follows that
  \[
  \hat M_{j,1}= \frac{X_{\pi(j),1}}{X_{\pi(j),1}+\hat\rho_{\pi(j)}}.
  \]
  Since $\{X_{\pi(j),\ell}, \ell\neq 1, j\in[n]\}$ are independent of
  $\{X_{\pi(j),1}, j\in[n]\}$, parts 1 and 2 of Lemma \ref{le:rows} imply
  convergence in distribution of $(\hat M_{1,1} ,\dots, \hat M_{k,1})$ to
  $(\hat\om_1,\dots,\hat\om_k)$ for any fixed $k$. Note that these sequences are
  not necessarily ranked.
 
  To prove part 2,
   for any $\ell,k\in[n]$, $\ell<k$, let $E_{\ell,k}(n)$ denote the event that
  the largest $\ell$ values of $ \{M_{j,1}, \,j\in[n]\}$ are not included in
  the sequence $(\hat M_{1,1} ,\dots, \hat M_{k,1})$, and call $p_{\ell,k}(n)$
  its probability. Below, we observe that for any fixed $\ell\in\bbN$,
  \begin{equation}\label{eq:omegasi}
    \lim_{k\to\infty}\limsup_{n\to\infty}p_{\ell,k}(n)=0.
  \end{equation}
  Once \eqref{eq:omegasi} is available, then it is not difficult to see  that
  the largest $\ell$ values of $\{M_{j,1}, \,j\in[n]\}$ and the largest $\ell$
  values of $\{\hat M_{j,1}, \,j\in[n]\}$ have the same limit in distribution,
  for any fixed $\ell$. It follows from part 1 that the ranked rearrangement
  of $\{M_{j,1}, \,j\in[n]\}$ converges in distribution to the ranked
  rearrangement of $\{\hat\om_i\}$.

   To prove \eqref{eq:omegasi}, we observe that the event $E_{\ell,k}(n)$ implies
  that there exists $j_*\in[n]$ such that $M_{j_*,1}$ is larger than the
  $\ell$-th maximum of $\{M_{\pi(i),1},\, i=1,\dots,k\}$ and
  $X_{j_*,1}<X_{\pi(k),1}$. In particular, there must be $j_*\in[n]$ and
  $i_*\in\{1,\dots,\ell\}$ such that $M_{j_*,1}>M_{\pi(i_*),1}$ and
  $X_{j_*,1}<X_{\pi(k),1}$. Therefore, see \eqref{eq:truno}, one must
  have 
  \[
  \hat\rho_{j_*}< X_{j_*,1}\max_{i=1,\dots,\ell}
  \frac{\hat\rho_{\pi(i)}}{X_{\pi(i),1}}
   .\] From part 1 of the lemma it follows that $\max_{i=1,\dots,\ell}
  \frac{\hat\rho_{\pi(i)}}{X_{\pi(i),1}}$ converges in distribution to
  $\max_{i=1,\dots,\ell}\frac{S_i}{\xi_i}$, and therefore, for all fixed
  $\ell\in\bbN$:
  \[
  \lim_{t\to\infty}\limsup_{n\to\infty}\bbP\left(\max_{i=1,\dots,\ell}
    \frac{\hat\rho_{\pi(i)}}{X_{\pi(i),1}}>t\right) = 0.
  \]
   Thus, it suffices to prove that for any $t>0$:
  \begin{equation}\label{eq:omegasi1}
    \lim_{k\to\infty}\limsup_{n\to\infty}
    \bbP\left(\exists j_*\in[n]: \;\hat\rho_{j_*}< t X_{j_*,1}
      \quad\text{and}\quad 
      X_{j_*,1}<X_{\pi(k),1}\right)=0.
  \end{equation} 
  From Lemma \ref{le:rows}, for any $\veps>0$, 
  \[
   \lim_{n\to\infty}\bbP(X_{\pi(k),1} > \veps a_n)=\lim_{k\to\infty}\bbP(\xi_k > \veps)=0.
  \]
  Thus, using a union bound, to prove \eqref{eq:omegasi1} we can restrict
  ourselves to the proof that for any fixed $t>0$:
  \begin{equation}\label{eq:omegassi}
    \lim_{\veps\to 0}\limsup_{n\to\infty}
    n\,\bbP\left(\hat\rho_1 < t X_{1,1}<\veps a_n\right)=0.
  \end{equation}
  However,  as in \eqref{eq:asy3} we have
  \begin{equation}\label{eq:omegassio}
    n\,\bbP\left(\hat\rho_1 < t X_{1,1}<\veps a_n\right) %
    \leq c^{-1}t^\al\int_0^{\veps}s^{-\al}L( t^{-1}a_ns)\mu_n(\dd s).
  \end{equation}
  From the estimates \eqref{eq:asy4} and \eqref{eq:asym} one obtains
  \eqref{eq:omegassi}.
  This concludes the proof of  \eqref{eq:omegasi}.
  
  It remains to prove that the ranked values of the limiting sequence have the
  same law of $\{\om_i\}$ in \eqref{eq:omegas}. To this end, define
  $W_j:=X_{j,1}/\hat\rho_j$, so that
  \begin{equation}\label{eq:trun}
    M_{j,1} = \frac{X_{j,1}}{X_{j,1}+\hat\rho_j} = \frac{1}{1+\frac1{W_j}}. 
  \end{equation} 
  The $W_j$ are i.i.d.\ random variables. From Lemma \ref{le:app1} we know that,
  as $n\to\infty$,
  \begin{equation}\label{eq:trun2}
    n\,\bbP(W_1>t)\to (qt)^{-\al} \,,\quad t>0.
  \end{equation} 
  Then, by a well known criterion, see \cite[Th.~5.3]{Resnick}, the ranked
  rearrangement of $\{q\,W_j,\,j\in[n]\}$ converges in distribution to
  $\{\xi_i\}$ with law PPP($\al$), and from \eqref{eq:trun} it follows that
  the ranked rearrangement of $(M_{j,1},\,j=1,\dots,n)$ converges in
  distribution to $\{\om_i\}$ as claimed.
\end{proof}

\subsection{The alternate Poisson weighted tree}\label{se:alt}

Here we define the limiting random rooted tree to be associated to the matrix
$M$. We start with some standard notation. The vertex set $V$ of the tree is
deterministic, and it is defined as the countable set of all points $\bk$ of
the form $\bk=(k_1,\dots,k_m)$ for some $m\in\bbN$ and $k_i\in\bbN$, for all
$i=1,\dots,m$, together with the root vertex $\eset$. If
$\bk=(k_1,\dots,k_m)\in V$, and $j\in\bbN$, we write $(\bk, j)$ or simply $\bk
j$ for the vertex $(k_1,\dots,k_m,j)\in V$. Here $\bbN$ is the set of positive integers. The interpretation is that $\bk j$
is the $j$-th child of vertex $\bk$. Thus, if $\bk=\eset$ is the root, then
$\eset j$ or simply $j$ denotes the $j$-th vertex in the first generation.
Note that for any $\bk\in V$, $\bk\neq\eset$, there is a unique pair
$(\bv_\bk,i_\bk)$, with $\bv_\bk\in V$, and $i_\bk\in\bbN$, such that $\bk =
\bv_\bk \,i_\bk$. The vertex $\bv_\bk$ is called the parent of $\bk$. If
$\bk=(k_1,\dots,k_m)$ for some $m\in\bbN$, we write $d(\bk):=m$, and say that
$\bk$ belongs to the $m$-th generation, or equivalently that $\bk$ is at
distance $m$ from the root.

Let $\xi^{(\bk)}=\{\xi_i^{(\bk)}\}$, $\bk\in V$, denote independent copies of
the Poisson process $\{\xi_i\}$ with law PPP($\al$), as in \eqref{eq:pppa}. We can
now define a random rooted tree $\cT_0$ as the undirected weighted tree with
vertex set $V$, with root at $\eset$, with edges connecting vertices
$\bk,\bk'\in V$ if and only if $\bk'=\bk j$ or $\bk=\bk' j$ for some
$j\in\bbN$, and with edges $\{\bk,\bk j\}$ carrying the weight
$\xi^{(\bk)}_j$. We often use the notation
\begin{equation}\label{eq:adj0}
  \cT_0(\bk,\bk j) = \cT_0(\bk j,\bk ) = \xi^{(\bk)}_j, 
\end{equation}  
for the weight along the undirected edge $\{\bk,\bk j\}$.
   
Next, let us define $S_\bk=\sum_{i=1}^\infty \xi^{(\bk)}_i$. Clearly, $S_\bk$,
$\bk\in V$ are independent copies of the one sided random variable with law
\eqref{eq:onesided}. Let us modify the tree $\cT_0$ as follows. We say that
$\bk\in V$ is even (respectively odd) if $d(\bk)$ is even (respectively odd).
The root counts as an even vertex. Let $\cT_+$ denote the tree with the same
vertex set and edge set as $\cT_0$ but with weights
\begin{equation}\label{eq:adj+}
  \wh \cT_+(\bk,\bk j) %
  = \wh \cT_+(\bk j,\bk ) %
  = \hat\zeta^{(\bk)}_j\ind_{\{\bk \;\text{even}\}}+\hat\om^{(\bk)}_j\ind_{\{\bk \;\text{odd}\}}, 
\end{equation}  
where 
\begin{equation}\label{eq:zetasom}
  \hat\zeta^{(\bk)}_j:=\frac{\xi^{(\bk)}_j}{ \xi^{(\bv_\bk)}_{i_\bk}+S_{\bk }}\,,\quad 
  \hat\om^{(\bk)}_j:=\frac{\xi^{(\bk)}_j}{\xi^{(\bk)}_j + S_{\bk j}}.
\end{equation}
As before, in \eqref{eq:zetasom} the pair $(\bv_\bk,i_\bk)$ is defined by the
relation $\bk = (\bv_\bk,i_\bk)$, and we use the convention that if
$\bk=\eset$, then $\xi^{(\bv_\bk)}_{i_\bk}=0$. In particular, the weights of
the edges from the root to the first generation have the law PD($\al$) as in
\eqref{eq:pda}. We remark that with these definitions one has
\begin{equation}\label{eq:zetasom1}
\hat\zeta^{(\bk)}_j=(1-\hat \om^{(\bv_\bk)}_{i_\bk})\,\frac{\xi^{(\bk)}_j}{ S_{\bk }}\,.
\end{equation}
Also, remark that the weights $\{\hat\om^{(\bk)}_j,\;j\in\bbN\}$ are
independent of all weights of the form $\wh \cT_+(\bv,\bv i)$ for any $\bv$
such that $d(\bv)<d(\bk)$. In particular, the randomness starts afresh at
every odd generation.
 
For any $\bk\in V$ one has $\hat\zeta^{(\bk)}_j\geq \hat\zeta^{(\bk)}_{j+1} $.
On the other hand the values $\hat\om^{(\bk)}_j$, $j\in\bbN$ are not
necessarily ranked. As in Lemma \ref{le:cols} we may consider the ranked
rearrangement of the weights $\{\hat\om^{(\bk)}_j\}$. Proceeding top to
bottom from the root one can then define a new tree $\cT_+$ isomorphic to
$\hat \cT_+$ and such that all weights are ranked in the sense that
\[
\cT_+(\bk,\bk j) \geq \cT_+(\bk,\bk (j+1))\,,\qquad \bk\in V,\;j\in\bbN.
\]

In a similar fashion, we may define the weighted rooted tree
\begin{equation}\label{eq:adj-}
  \wh \cT_-(\bk,\bk j) %
  = \wh \cT_-(\bk j,\bk ) %
  = \hat\zeta^{(\bk)}_j\ind_{\{\bk \;\text{odd}\}}+\hat\om^{(\bk)}_j\ind_{\{\bk \;\text{even}\}}, 
\end{equation}  
where $ \hat\zeta^{(\bk)}_j$ and $\hat\om^{(\bk)}_j$ are defined by
\eqref{eq:zetasom}. Then, by taking ranked rearrangements one defines the tree
$\cT_-$ such that
\[
\cT_-(\bk,\bk j) \geq \cT_-(\bk,\bk (j+1))\,,\qquad \bk\in V,\;j\in\bbN.
\]
\subsection{Adjacency operators on trees}\label{se:adj}
Consider the Hilbert space $\ell^2(V)$ of square integrable sequences
$\varphi:V\mapsto\dC$, with the scalar product defined by
\[
\scalar{\varphi}{\varphi'}=\sum_{\bk\in V}\overline\varphi(\bk)\varphi'(\bk).
\]
The indicator functions $\ind_\bk$ defined by $\ind_\bk(\bk')=\de_{\bk,\bk'}$
form an orthonormal basis. The weights \eqref{eq:adj0} produce almost surely a
densely defined symmetric operator $\cT_0$ acting on $ \ell^2(V)$ by the
formula
\begin{equation}\label{eq:adje0}
  \scalar{\ind_\bk}{\cT_0\ind_{\bk'}} :=
  \begin{cases}
    \cT_0(\bk,\bk j) & \text{if}\,\; \bk'=\bk j \\
    \cT_0(\bk',\bk' j) & \text{if}\,\; \bk=\bk' j \\
    0 & \text{if}\,\; \bk\not\sim\bk'
  \end{cases}
\end{equation}  
where $\bk\not\sim\bk'$ indicates that $(\bk,\bk')$ is not an edge of the tree, that is $\bk'\neq \bk j$ and $\bk\neq \bk' j$ for all $j\in\bbN$.
In the same way, using the matrix elements \eqref{eq:adj+} and \eqref{eq:adj-} we
define the operators $\wh \cT_+$ and $\wh \cT_-$ respectively, together with
their ranked versions $\cT_+$ and $\cT_-$. All the operators defined above have dense domain $\cD$, defined as the set of finitely supported vectors, and are
symmetric in $\ell^2(V)$. With a slight abuse of notation we identify them
with their closure. It is crucial for our purposes that these operators are
actually self-adjoint; see e.g.\ \cite{reedsimon} for background on
self-adjointness of a symmetric unbounded operator. 

\begin{lemma}\label{le:esssa}
  The operators $\cT_0$, $\wh \cT_\pm$, and $\cT_\pm$ are almost surely self-adjoint.
\end{lemma}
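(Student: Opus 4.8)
The plan is to prove essential self-adjointness of each symmetric operator on the domain $\cD$ of finitely supported vectors, using the standard criterion that a symmetric operator $A$ with dense domain is essentially self-adjoint if and only if $\ker(A^*\mp i)=\{0\}$, equivalently if the only $\varphi\in\ell^2(V)$ solving the formal eigenvalue equation $A\varphi=\pm i\,\varphi$ (interpreted entrywise via the matrix elements) is $\varphi=0$. Since the operators $\cT_0$, $\wh\cT_\pm$, $\cT_\pm$ all have the same underlying tree structure and differ only in the choice of edge weights, it suffices to establish a general deterministic lemma: if $\cA$ is the adjacency-type operator of the tree $V$ with nonnegative edge weights $(w_e)_e$, and the weights satisfy a suitable almost-sure summability/growth condition, then $\cA$ is essentially self-adjoint; then one checks that condition holds a.s.\ for each of our weight families.

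First I would recall the key structural fact about trees, which is what makes self-adjointness plausible here despite the weights being unbounded: the tree $V$ has a natural distance to the root, and on a tree the operator is ``local'' in the sense that $(\cA\varphi)(\bk)$ only involves $\varphi$ at the parent and children of $\bk$. The standard approach (going back to e.g.\ the arguments in \cite{BCChnh} and the general theory in \cite{reedsimon}) is to exploit the following: let $\varphi\in\ell^2(V)$ satisfy $\cA\varphi=i\varphi$ entrywise. Fix a vertex $\bk$ and sum the identity $\sum_{\bk}\overline{\varphi(\bk)}(\cA\varphi)(\bk)=i\|\varphi\|^2$ over finite balls $B_r=\{\bk:d(\bk)\leq r\}$; the imaginary part of the truncated sum is controlled by the ``boundary'' terms crossing the sphere $d(\bk)=r$, i.e.\ by $\sum_{d(\bk)=r}|\text{weight of edge to parent}|\,|\varphi(\bk)||\varphi(\bv_\bk)|$. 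Since $\varphi\in\ell^2$, one wants to find a subsequence $r_m\to\infty$ along which these boundary terms vanish; this works provided the weights do not grow too fast relative to the decay of $\varphi$. More precisely, the cleanest route is to use that a symmetric operator which is bounded ``layer by layer'' with controlled growth — specifically, writing $\cA=\sum_r \cA_r$ where $\cA_r$ connects generation $r-1$ to generation $r$ — is essentially self-adjoint if $\sum_r \|\cA_r\|^{-1}=\infty$ or more robustly if each $\cA_r$ is a.s.\ bounded; but here we cannot expect $\|\cA_r\|<\infty$ for $\cT_0$. The genuinely workable criterion for these PWIT-type operators is the one in \cite[Lemma A.2 or similar]{BCChnh}: essential self-adjointness holds a.s.\ as soon as, almost surely, for every vertex $\bk$ the sum of weights of edges incident to $\bk$ is finite and the tree has at most ``sub-exponential'' weight growth along rays — and crucially, for each of $\cT_0,\wh\cT_\pm,\cT_\pm$ the weights incident to a fixed vertex are a.s.\ summable: for $\cT_0$, $\sum_j\xi_j^{(\bk)}=S_\bk<\infty$ a.s.; for the $\wh\cT_\pm$ and $\cT_\pm$ the weights are even smaller (ratios bounded by the $\cT_0$-weights divided by $S_\bk$, hence incident sums are a.s.\ finite, in fact bounded by $1$ plus a finite term).

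So the second main step is to record the key deterministic sufficient condition — I would state it as: \emph{the operator $\cA$ on the tree $V$ is essentially self-adjoint on $\cD$ provided that, for every $\bk\in V$, $\sum_{\bk'\sim\bk}|\langle\ind_\bk,\cA\ind_{\bk'}\rangle|<\infty$ and moreover the operator is bounded on each ``finite-depth'' subtree with a quantitative bound allowing a Carleman-type series to diverge} — and then verify it using Lemma~\ref{le:rows} and the explicit formulas \eqref{eq:adj0}, \eqref{eq:adj+}, \eqref{eq:adj-}, \eqref{eq:zetasom}. The verification is where one uses $S_\bk<\infty$ a.s.\ (one-sided $\alpha$-stable, $\alpha\in(0,1)$, so finite) together with $\sum_j\xi_j^{(\bk)}=S_\bk$; for $\wh\cT_+$ the incident-weight sum at an even vertex $\bk$ is $\sum_j\hat\zeta_j^{(\bk)}=(1-\hat\om_{i_\bk}^{(\bv_\bk)})\frac{S_\bk}{S_\bk}=1-\hat\om_{i_\bk}^{(\bv_\bk)}\leq 1$ by \eqref{eq:zetasom1}, and at an odd vertex $\sum_j\hat\om_j^{(\bk)}\leq\sum_j\xi_j^{(\bk)}/S_{\bk j}$ — wait, this needs a touch of care since the $S_{\bk j}$ in different denominators are different, but each term is $\leq 1$ and more usefully $\hat\om_j^{(\bk)}=\xi_j^{(\bk)}/(\xi_j^{(\bk)}+S_{\bk j})$ so the sum is dominated by $\#\{j:\xi_j^{(\bk)}>\varepsilon\}$ plus $\varepsilon^{-1}\sum_{j:\xi_j^{(\bk)}\leq\varepsilon}\xi_j^{(\bk)}$, both a.s.\ finite; passing to ranked versions $\cT_\pm$ does not change any of these sums. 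Since self-adjointness is a property of the closed operator and invariant under the isomorphism reordering children, the ranked versions inherit it.

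The main obstacle I anticipate is not the entrywise estimates but setting up the right essential-self-adjointness criterion in a way that genuinely applies: for unbounded tree operators the naive ``rows are $\ell^1$'' condition is \emph{not} sufficient in general (unbounded Jacobi-type operators can fail to be self-adjoint), so one really needs a growth condition along rays of the tree — a Carleman/Nelson-type criterion controlling how fast the weights can blow up as $d(\bk)\to\infty$. The resolution, and the place where the heavy-tailed structure matters, is that here the weights are uniformly \emph{not} growing: every edge weight of $\wh\cT_\pm$ and $\cT_\pm$ lies in $[0,1]$, so these operators are in fact bounded (operator norm $\leq$ some a.s.-finite constant, by the $\ell^1$-row bound $\leq 2$ just computed and symmetry via Schur's test), hence trivially self-adjoint; the only operator requiring the tree-structure argument is $\cT_0$ itself, and there the deterministic input is that $S_\bk<\infty$ for all $\bk$ simultaneously (countably many a.s.-finite random variables) combined with the tree having polynomial — indeed the generations can be enumerated so that the relevant Carleman series diverges because $\ell^2$-membership of $\varphi$ forces $\sum_{d(\bk)=r}|\varphi(\bk)|^2\to 0$. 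I would therefore structure the proof as: (i) cite/state the tree essential-self-adjointness criterion; (ii) handle $\wh\cT_\pm,\cT_\pm$ immediately by boundedness (Schur test with the $\leq 2$ row sums); (iii) handle $\cT_0$ via the criterion, using a.s.\ finiteness of all $S_\bk$.
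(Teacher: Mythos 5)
Your framework (a Carleman/Nelson-type self-adjointness criterion for tree operators, verified through summability of the incident weights) is the right one, and your treatment of $\cT_0$ coincides with the paper's, which simply invokes \cite[Prop.~A.2]{BCChr}. The error is in your step (ii): $\wh\cT_\pm$ and $\cT_\pm$ are \emph{not} bounded operators, and the Schur test does not apply. While each individual weight lies in $[0,1]$ and the incident sum at every \emph{even} vertex equals $1$ (this is \eqref{eq:adjnorm}), at an odd vertex $\bk$ the incident sum $\sum_j\hat\om^{(\bk)}_j=\sum_j \xi^{(\bk)}_j/(\xi^{(\bk)}_j+S_{\bk j})$ is only almost surely finite, not bounded: it exceeds $N/2$ on the positive-probability event $\{\xi^{(\bk)}_j\geq S_{\bk j},\ j=1,\dots,N\}$, for every $N$. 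Since the odd vertices carry i.i.d.\ copies of this unbounded random variable, the supremum of the row sums over the tree is a.s.\ infinite, so no uniform Schur bound exists. Indeed, even $\sum_j(\hat\om^{(\bk)}_j)^2$ --- the squared norm of the restriction of the operator to the star at $\bk$ --- is exactly the unbounded variable $\Psi$ that Lemma~\ref{le:usup} uses to prove that $\nu_{\al,z}$ has \emph{unbounded} support; that statement is precisely the unboundedness of these limiting operators. So the case you dismiss as trivial is the one that genuinely needs the tree criterion, and --- as you yourself note --- pointwise a.s.\ finiteness of the incident sums is not a sufficient condition for essential self-adjointness of an unbounded tree operator.

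The paper closes this gap by applying to $\wh\cT_\pm$ the same criterion used for $\cT_0$, supplying the quantitative input it requires: for odd $\bk$ it defines $\tau_\kappa(\bk)$ as the number of largest incident weights that must be removed so that the remaining ones sum to at most $\kappa$ (see \eqref{eq:adjtau}); these are i.i.d.\ over odd vertices, and one checks that $\bbE\,\tau_\kappa(\bk)<\infty$ and $\bbE\,\tau_\kappa(\bk)\to 0$ as $\kappa\to\infty$ by stochastically dominating $\tau_\kappa(\bk)$ with the analogous truncation time $\bar\tau_\kappa$ for the raw PPP weights $\{\xi_j\}$ (using that the ranked $\{\hat\om^{(\bk)}_j\}$ has the law of $\{\xi_j/(\xi_j+q)\}$ by Lemma~\ref{le:cols}, so $\om_j\leq \xi_j/q$), the needed moment bounds being \cite[Lem.~A.4]{BCChr}. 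To repair your proof you would need to replace the boundedness claim by such a quantitative truncation estimate at the odd vertices; without it the argument for $\wh\cT_\pm$ and $\cT_\pm$ does not close.
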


\begin{proof}
  Self-adjointness of $\cT_0$ is shown in \cite[Prop.~A.2]{BCChr}. To
  prove the other statements we may adapt the same argument. Let us consider
  the case of $\wh \cT_+$, the others being very similar. Notice that for any
  even vertex $\bk$ one has
  \begin{equation}\label{eq:adjnorm}
    \sum_{\bv\in V}\wh \cT_+(\bk,\bv)=1.
  \end{equation}
  On the other hand, for $\bk$ odd and $\kappa >0$, define the variables
  \begin{equation}\label{eq:adjtau}
    \tau_\kappa(\bk)=\inf\Big\{t\geq 0:\;\sum_{j=t+1}^\infty\om^{(\bk)}_j\leq \kappa\Big\},
  \end{equation}
  where $\{\om^{(\bk)}_j, j\in\bbN\}$ denotes the ranked rearrangement of
  $\{\hat\om^{(\bk)}_j, j\in\bbN\}$. For any $\kappa>0$, the random variables
  $ \{\tau_\kappa(\bk),\,\bk \text{ odd}\}$ are independent and identically
  distributed. Thus, the very same proof of \cite[Prop.~A.2]{BCChr} applies,
  provided we prove that for fixed $\bk$ odd, one has
  $\bbE\tau_\kappa(\bk)<\infty$ for all $\kappa>0$ and
  $\bbE\tau_\kappa(\bk)\to 0$, as $\kappa\to\infty$. Now, by Lemma
  \ref{le:cols}, $\tau_\kappa(\bk)$ has the same law of the first $t\geq 0$
  such that $\sum_{j=t+1}^\infty\frac{\xi_j}{\xi_j+q}\leq \kappa$; see
  \eqref{eq:omegas}. In particular, it is stochastically dominated by the random
  variable
  \[
  \bar \tau_\kappa:=\inf\Big\{t\geq 0:\;\sum_{j=t+1}^\infty\xi_j\leq q\kappa\Big\},
  \]
  which satisfies $\bbE\bar \tau_\kappa<\infty$ and $\bbE\bar \tau_\kappa\to
  0$, $\kappa\to\infty$, by \cite[Lem.~A.4]{BCChr}.
 \end{proof}

\subsection{The unfolding map}\label{se:unfold}

Fix a vertex $i_0\in[n]$ and two integers $b,h\in\bbN$. Let $V_{b,h}\subset V$
denote the set of $\bk\in V$ of the form $\bk=(k_1,\dots,k_\ell)$, where
$k_i\in\{1,\dots,b\}$, and $\ell\leq h$, together with the root vertex
$\eset$. That is, $V_{b,h}$ is the finite subset obtained from $V$ by 
considering
only the first $b$ children of each node, and stopping at the $h$-th generation. As usual if
$\bk,\bk'\in V_{b,h}$ we write $\bk\sim\bk'$ iff $\bk'=\bk j$ for $\bk=\bk' j$
for some $j$. We now define a map $\phi_+: V_{b,h}\to [n]$ (respectively
$\phi_-: V_{b,h}\to [n]$) revealing the alternating ranked rearrangements of
rows and columns of the matrix $X$ starting with the row labeled $i_0$
(respectively, starting with the column labeled $i_0$). The map 
\[
\phi_+: V_{b,h}\to [n],
\]
is defined as follows. Start with $\phi_+(\eset):=i_0$. Let $X_{i_0,\pi(j)}$,
$j=1,\dots,b$, denote the first $b$ terms of the ranked rearrangement of the
row $(X_{i_0,j},j\in[n], j\neq i_0)$ (as usual we break ties using the
lexicographic ordering if necessary). Set $\phi_+(j):=\pi(j)$, $j=1,\dots,b$.
Thus far we have defined the map $\phi_+$ on the root vertex and the first
generation vertices. To uncover the second generation, set $I=\{i_0\}$, and
let $X_{\pi_1(j),\pi(1)}$, $j=1,\dots,b$, denote the first $b$ terms of the
ranked rearrangement of the column $(X_{j,\pi(1)},j\in[n], j\notin I)$, and
set $\phi_+(1j):=\pi_1(j)$, $j=1,\dots,b$. Rename $I=
\{i_0,\pi_1(1),\dots,\pi_1(b)\}$, let $X_{\pi_2(j),\pi(2)}$, $j=1,\dots,b$
denote the first $b$ terms of the ranked rearrangement of the column
$(X_{j,\pi(2)},j\in[n], j\notin I)$, and set $\phi_+(2j):=\pi_2(j)$,
$j=1,\dots,b$. Repeating this procedure for the columns labeled
$\pi(3),\dots,\pi(b)$ one completes the definition of $\phi_+$ for second
generation of $V_{b,h}$. To complete the definition of the map $\phi_+$ we
proceed recursively (redefining at each step the set $I$ so that all rows
revealed so far are excluded from the ranking) in the same way, with the rule
that if we are revealing the $m$-th generation, then we look at rankings of
the rows labeled by the vertices revealed in the $(m-1)$-th generation if $m$
is odd, and rankings of the columns labeled by the vertices revealed in the
$(m-1)$-th generation if $m$ is even. This ends the definition of $\phi_+$. To
define the map $\phi_-: V_{b,h}\to [n]$ we proceed in the exact same way, with
the role of rows and columns exchanged. Equivalently one may define $\phi_-$
as the map $\phi_+$ obtained by replacing $X$ with $X^\top$.

\begin{example}\label{ex:simple}
  {\em Let us give the explicit values of $\phi_\pm$ in a simple example.
    Suppose $n=5$ and consider the realization
    \begin{equation}\label{eq:simplx}
      X=\begin{pmatrix}0.1& 3.2& 2.1& 4& 0.2\\
        0 & 1.2 & 3.3 & 3.4 & 1.7\\
        0.4 & 10.3 & 0.1 & 2 & 3\\
        0.2 & 3.1 & 1.67 & 5 & 11\\
        8 & 4.7 & 1.2 & 1.98 & 2  
      \end{pmatrix}.
    \end{equation}
    Suppose $i_0=3$ so that $\phi_\pm(\eset)=3$. The highest value in row 3 is
    $10.3$ corresponding to column $2$, thus $\phi_+(1)=2$. The second highest
    value in row 3 is $3$ corresponding to column $5$, thus $\phi_+(2)=5$. The
    two highest values in column 2 are $4.7$ and $3.2$ corresponding to row 5
    and row 1 respectively, thus $\phi_+(11)=5$ and $\phi_+(12)=1$. Next, if
    we eliminate row 1 and row 5, the two highest values in column 5 are $11$
    and $1.7$ corresponding to row 4 and row 2 respectively, thus
    $\phi_+(21)=4$ and $\phi_+(22)=2$. On the other hand, scanning first
    columns and then rows gives the map $\phi_-$ depicted below.
    \begin{equation}\label{eq:table}
      \begin{tabular}{|c|c|c|c|c|c|c|c|} \hline
        $\bk$ &$\eset$ &1 & 2 & 11& 12 & 21& 22\\\hline
        $\phi_+(\bk)$& 3&2&5&5&1&4&2\\\hline
        $\phi_-(\bk)$ & 3& 2& 1& 4& 5& 2& 1\\\hline
      \end{tabular}\;\;
    \end{equation}
  }
\end{example}

\bigskip

We now go back to the general setting. 
To link our original matrix $X$ with the limiting tree $\cT_0$ we need a further step, namely bipartization. Let $A$ denote the $2n\times 2n$ bipartite symmetric matrix
\begin{equation}\label{eq:bipx}
  A=\begin{pmatrix}0& X\\ X^\top & 0
  \end{pmatrix}.
\end{equation} 
For any $\bk\in V_{b,h}$, set 
\begin{equation}\label{eq:bipx1}
  \psi_+(\bk) :=\begin{cases}
    \phi_+(\bk)& \bk \text{ even}\\
    \phi_+(\bk)+n & \bk \text{ odd}
  \end{cases} \quad \text{ and} \quad \psi_-(\bk) :=\begin{cases}
    \phi_-(\bk)& \bk \text{ odd}\\
    \phi_-(\bk)+n & \bk \text{ even}
  \end{cases} 
\end{equation}
The above defines two injective maps $\psi_\pm:V_{b,h}\mapsto[2n]$. A network
is a collection of vertices together with undirected weighted edges between
them. Consider the finite random network $A^{b,h}_{\pm,n}$ with vertex set
$V_{b,h}$ and with weighted edge between vertices $\bu,\bv\in V_{b,h}$ defined
by $a_n^{-1}A_{u,v}$, where $u:=\psi_\pm(\bu)$ and $v:=\psi_\pm(\bv)$. Notice
that the network $A^{b,h}_{\pm,n}$ has a fixed number of vertices but the
value of edge weights depends on $n$. See Figure \ref{fi:fignet} for an
explanatory example. At any finite $n$ the network can well have cycles, but
these tend to disappear as $n\to\infty$ as the next lemma shows. Weak
convergence of finite random networks is defined as weak convergence of the
joint law of edge weights in the natural way.
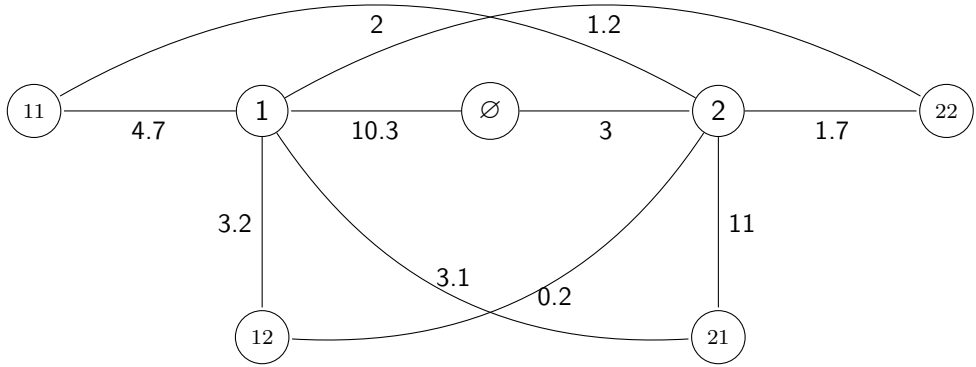
\begin{figure}
  \begin{center}
    \begin{tikzpicture}[-,>=stealth',shorten >=1pt,node distance=3cm,main node/.style={circle,draw,font=\sffamily
      }]
      \node[main node] (0) {$\eset$};
      \node[main node] (1) [left of=0] {1};
      \node[main node] (2) [right  of=0] {2};
      \node[main node] (3) [left of=1] {\SMALL{$11$}};
      \node[main node] (4) [below of=1] {\SMALL{$12$}};
      \node[main node] (5) [below of=2] {\SMALL{$21$}};
      \node[main node] (6) [right of=2] {\SMALL{$22$}};
      
      \path[every node/.style={font=\sffamily\small}]
      (0) 
      edge node [below]{3} (2)
      edge node [below]{10.3} (1)
      
      (1) 
      edge node [below]{4.7} (3)
      edge node [left]{3.2} (4)
      edge  [bend right] node [above]{3.1} (5)
      edge  [bend left]node [below]{1.2} (6)
      
      (2) 
      edge [bend right] node [below] {2} (3)
      edge [bend left]node [right]{0.2} (4)
      edge node [right]{11} (5)
      edge node [below]{1.7} (6)      
      ;
    \end{tikzpicture}
  \end{center}
  \caption{The network $A^{2,2}_{+,5}$ corresponding to the matrix $X$ in
    Example \ref{ex:simple}. If the bended edges are removed, then one obtains
    the network $\wt A^{2,2}_{+,5}$ used in the proof of Lemma
    \ref{le:localconv1}.}
  \label{fi:fignet}
\end{figure}

\begin{lemma}[Convergence of i.i.d.\ networks to trees]\label{le:localconv1}
  The random networks $A^{b,h}_{+,n}$ and $A^{b,h}_{-,n}$ both converge
  weakly, as $n\to\infty$, to the restriction of the random tree $\cT_0$ to
  the vertex set $V_{b,h}$, for any fixed $b,h\in\bbN$, and for any choice of
  the initial vertex $i_0$.
\end{lemma}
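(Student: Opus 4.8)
The plan is to establish weak convergence of $A^{b,h}_{\pm,n}$ by working directly with its finitely many edge weights, since on the fixed finite vertex set $V_{b,h}$ weak convergence of a random network means joint weak convergence of the vector of all edge weights. I split this vector into the \emph{tree weights} $a_n^{-1}A_{\psi_\pm(\bk),\psi_\pm(\bk j)}$, indexed by the pairs $\{\bk,\bk j\}$ with $\bk j\in V_{b,h}$, and the \emph{extra weights} $a_n^{-1}A_{\psi_\pm(\bu),\psi_\pm(\bv)}$, indexed by the pairs $\bu\not\sim\bv$. I will show that the tree weights converge jointly in distribution to the family $\{\xi^{(\bk)}_j:\bk j\in V_{b,h}\}$, i.e.\ to the weights \eqref{eq:adj0} of $\cT_0$ restricted to $V_{b,h}$, while every extra weight tends to $0$ in probability; since $\cT_0$ assigns weight $0$ to every non-edge, the two facts together give the statement. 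It suffices to treat $\phi_+$, hence $\psi_+$, since passing from $\phi_+$ to $\phi_-$ merely swaps the two off-diagonal blocks of $A$, i.e.\ replaces $X$ by $X^\top$.

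\emph{Tree weights.} I would argue by induction on the generation $m=1,\dots,h$, proving that the tree weights on generations $\le m$ converge jointly in distribution to $\{\xi^{(\bk)}_j:\bk j\in V_{b,h},\,d(\bk j)\le m\}$. For $m=1$ the weights in question are the $b$ largest entries of the ranked rearrangement of $\{a_n^{-1}X_{i_0,j}:j\in[n],\,j\ne i_0\}$; deleting the one entry $j=i_0$ does not change the limit, so part~1 of Lemma~\ref{le:rows} identifies it as $(\xi_1,\dots,\xi_b)=(\xi^{(\eset)}_1,\dots,\xi^{(\eset)}_b)$. For the inductive step I condition on the exploration through generation $m$, that is on the labels $\phi_+$ and the weights uncovered so far. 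Generation $m+1$ is produced by taking ranked rearrangements of the finitely many rows (if $m+1$ is odd) or columns (if $m+1$ is even) of $X$ indexed by the generation-$m$ vertices, each over the complement of a set of at most $|V_{b,h}|$ already-used indices of the relevant type. By construction this excluded set contains the index of the edge joining the vertex to its parent, so the entries being ranked are, conditionally on the past, honest i.i.d.\ copies of $\bx$, and the rankings are mutually independent across the distinct generation-$m$ vertices because $\phi_+$ is injective. Discarding finitely many coordinates does not affect the Poisson limit, so conditionally each ranked rearrangement converges to an independent PPP($\al$) by part~1 of Lemma~\ref{le:rows}; as this limiting law does not depend on the conditioning, it combines with the induction hypothesis to yield joint convergence of the tree weights through generation $m+1$ to the corresponding independent family $\{\xi^{(\bk)}\}$, which is precisely the law \eqref{eq:adj0} of $\cT_0$ on $V_{b,h}$.

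\emph{Extra weights.} For the extra weights I would first observe that if $\bu$ and $\bv$ have the same parity then $A_{\psi_+(\bu),\psi_+(\bv)}$ sits in a vanishing diagonal block of $A$, so the weight is exactly $0$; only even--odd pairs $\{\bu,\bv\}$ need attention, and for such a pair the weight is $a_n^{-1}X_{i,j}$ with $i=\phi_+(\bu)$ and $j=\phi_+(\bv)$. Conditioning on the exploration, the point is that the exclusion sets in the unfolding prevent $X_{i,j}$ from ever being among the $b$ largest entries of an explored row or column ranking (otherwise injectivity of $\phi_+$ would force $\bv$ to be a child of $\bu$, contradicting $\bu\not\sim\bv$); more than that, either $X_{i,j}$ was never inspected, so conditionally it is a fresh copy of $\bx$, or it was inspected but the index singling out $X_{i,j}$ inside that ranking was produced by a different, independent line, so that conditionally $X_{i,j}$ is stochastically dominated by a copy of $\bx$. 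In both cases $\bbP(a_n^{-1}X_{i,j}\ge\veps)=\bbP(\bx\ge\veps a_n)+o(1)\to 0$ because $a_n\to\infty$, and a union bound over the finitely many even--odd pairs shows all extra weights vanish in probability. Together with the tree-weight step this gives weak convergence of $A^{b,h}_{+,n}$ to the restriction of $\cT_0$ to $V_{b,h}$.

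The step I expect to be the main obstacle is the extra-weights analysis, or more precisely the combinatorial verification that the exclusion rules defining $\phi_\pm$ and $\psi_\pm$ do two things at once: keep the rows and columns fed to Lemma~\ref{le:rows} in the inductive step i.i.d.\ and independent of the parent edge, and make every extra-edge entry, conditionally on the exploration, no larger than a single undisturbed copy of $\bx$. This is carried out by the same analysis of the exploration process as in the i.i.d.\ setting of \cite{BCChnh}.
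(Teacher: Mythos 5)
Your proposal is correct and follows essentially the same route as the paper's proof: reduce to $A^{b,h}_{+,n}$ by symmetry, obtain convergence of the tree sub-network $\wt A^{b,h}_{+,n}$ by repeated application of part 1 of Lemma \ref{le:rows} along the exploration, and show the discarded non-tree weights vanish in probability by stochastic domination (the paper delegates this last step to \cite[Prop.~2.6]{BCChr}). Your write-up merely fills in the conditioning and exclusion-set details that the paper leaves to the cited reference.
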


\begin{proof}
  By symmetry, $A^{b,h}_{+,n}$ and $A^{b,h}_{-,n}$ have the same law, thus it
  is sufficient to prove the statement for $A^{b,h}_{+,n}$ only. Let $\wt
  A^{b,h}_{+,n}$ denote the sub-network obtained from $A^{b,h}_{+,n}$ by giving
  weight zero to all edges of the form $\{\bu,\bv\}$ such that
  $\bu\not\sim\bv$. See Figure \ref{fi:fignet} for an example. Then, by
  construction, and using repeatedly part 1 of Lemma \ref{le:rows} one has
  that the sequence $\wt A^{b,h}_{+,n}$ converges weakly to the restriction of
  $\cT_0$ to the vertex set $V_{b,h}$. To prove the desired convergence then
  it suffices to prove that the discarded weights in $A^{b,h}_{+,n}$ converge
  to zero in probability. This can be checked by a stochastic domination
  argument as already done in \cite[Prop.~2.6]{BCChr}.
\end{proof}

Next, we turn to our matrix $M$ with normalized rows. Let $B$ denote the
$2n\times 2n$ bipartite symmetric matrix
\begin{equation}\label{eq:bipm}
  B=\begin{pmatrix}0& M\\ M^\top & 0
  \end{pmatrix}.
\end{equation} 
Let $\psi_\pm$ denote the same maps defined in \eqref{eq:bipx1}, and consider
the networks $B^{b,h}_{\pm,n}$ defined by the vertex set $V_{b,h}$ as above
but with a weighted edge between any pair of vertices $\bu,\bv\in V_{b,h}$
defined by $B_{u,v}$, where $u:=\psi_\pm(\bk)$ and $v:=\psi_\pm(\bv)$. Note
that we do not require any rescaling now. Also note that in contrast with the
case of the matrix $A$, the law of the finite networks $B^{b,h}_{+,n}$ and
$B^{b,h}_{-,n}$ do not coincide.

\begin{lemma}[Convergence of Markov networks to trees]\label{le:localconv2}
  The random network $B^{b,h}_{\pm,n}$ converges weakly, as $n\to\infty$, to
  the restriction of the random tree $\wh \cT_\pm$ to the vertex set
  $V_{b,h}$, for any fixed $b,h\in\bbN$, and for any choice of the initial
  vertex $i_0$.
\end{lemma}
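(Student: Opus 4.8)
The plan is to follow the proof of Lemma~\ref{le:localconv1}, the one genuinely new feature being that the weights of $B$, in contrast with those of $a_n^{-1}A$, are the ratios $M_{r,c}=X_{r,c}/\rho_r$. By interchanging the roles of rows and columns of $X$ throughout the argument --- which interchanges the two offspring mechanisms attached to even and to odd vertices, hence $\wh\cT_+$ and $\wh\cT_-$ --- it suffices to treat $B^{b,h}_{+,n}$. From \eqref{eq:bipx1} and \eqref{eq:bipm}, the weight of an edge $\{\bu,\bv\}$ of $B^{b,h}_{+,n}$ vanishes unless $\bu$ and $\bv$ have opposite parity, in which case it equals $M_{r,c}=X_{r,c}/\rho_r$ with $\{r,c\}=\{\phi_+(\bu),\phi_+(\bv)\}$ and $r$ the label of the even endpoint. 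As in the proof of Lemma~\ref{le:localconv1} I would introduce the sub-network $\wt B^{b,h}_{+,n}$ obtained by setting to $0$ every edge $\{\bu,\bv\}$ with $\bu\not\sim\bv$, and split the argument into showing (a) $\wt B^{b,h}_{+,n}\weak\wh\cT_+|_{V_{b,h}}$, and (b) the discarded edges of $B^{b,h}_{+,n}$ tend to $0$ in probability, after which Slutsky's lemma gives the claim. Point (b) is immediate: a discarded edge has weight $(a_n^{-1}X_{r,c})/(a_n^{-1}\rho_r)$, whose numerator tends to $0$ in probability by the stochastic-domination estimate already used for the discarded edges of $A^{b,h}_{+,n}$ in Lemma~\ref{le:localconv1}, and whose denominator converges to an almost surely positive limit by the joint convergence stated next; since $V_{b,h}$ is finite, (b) follows.

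The core is point (a), which I would deduce from the following refinement of Lemma~\ref{le:localconv1}: jointly with the convergence $A^{b,h}_{+,n}\weak\cT_0|_{V_{b,h}}$, one has, for every even vertex $\bk\in V_{b,h}$,
\[
  a_n^{-1}\rho_{\phi_+(\bk)}\ \longrightarrow\ \xi^{(\bv_\bk)}_{i_\bk}+S_\bk,
\]
where $\xi^{(\bv_\bk)}_{i_\bk}$ is the rescaled weight of the edge joining $\bk$ to its parent in $\cT_0$ (with the convention $\xi^{(\bv_\eset)}_{i_\eset}=0$) and $S_\bk:=\sum_{j\geq1}\xi^{(\bk)}_j$ is the total mass of the very process $\{\xi^{(\bk)}_j\}_j\sim\mathrm{PPP}(\al)$ whose $b$ largest points $\xi^{(\bk)}_1\geq\cdots\geq\xi^{(\bk)}_b$ are the limits of the $b$ largest entries of row $\phi_+(\bk)$ distinct from the one linking $\bk$ to its parent; the analogous statement holds with rows replaced by columns at odd vertices. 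This joint convergence holds because the entries of a given row of $X$ are i.i.d.\ and heavy-tailed, so that the pair (tuple of largest entries, total sum) converges jointly to (ranked $\mathrm{PPP}(\al)$, its sum); the selection of row $\phi_+(\bk)$ in the previous generation constrains only a single entry of that row and leaves the i.i.d.\ remainder intact, and distinct rows and columns revealed by the unfolding map are asymptotically independent --- exactly the mechanism already exploited in \cite[Prop.~2.6]{BCChr}. Granting the display, point (a) follows from the continuous mapping theorem applied to the operation of dividing the weight of each tree edge $\{\bk,\bk j\}$ by $a_n^{-1}\rho_r$, with $r$ the label of its even endpoint: the limiting row sums being almost surely in $(0,\infty)$, the edge weights of $\wt B^{b,h}_{+,n}$ converge jointly to $\xi^{(\bk)}_j/(\xi^{(\bv_\bk)}_{i_\bk}+S_\bk)=\hat\zeta^{(\bk)}_j$ when $\bk$ is even, and to $\xi^{(\bk)}_j/(\xi^{(\bk)}_j+S_{\bk j})=\hat\om^{(\bk)}_j$ when $\bk$ is odd, that is precisely to the weights of $\wh\cT_+$ in \eqref{eq:adj+}--\eqref{eq:zetasom}. (For the root this reproduces the $\mathrm{PD}(\al)$ weights of Lemma~\ref{le:rows}; for the odd-to-even edges one may alternatively invoke part~1 of Lemma~\ref{le:cols}.)

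The step I expect to be the main obstacle is the joint convergence displayed above, precisely because of the dependency it carries: the numerators $\xi^{(\bk)}_j$ of the $\hat\zeta$-weights emanating from an even vertex $\bk$ are themselves summands of $S_\bk$, which then reappears in every denominator attached to $\bk$, so $S_\bk$ cannot be regarded as independent of the revealed edges. One therefore has to run the generation-by-generation exploration underlying Lemma~\ref{le:localconv1} while keeping track, for each newly revealed row (resp.\ column), of its entire Poisson limit --- or at least of its total mass --- rather than only of its $b$ largest points, and to check that the conditioning imposed at each generation (selecting a new row or column because it has one large entry in the row or column revealed at the previous generation) affects a single entry only. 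Once this enhanced local convergence is in place, the rest is a routine iteration of Lemmas~\ref{le:rows}, \ref{le:app1} and~\ref{le:cols} together with the negligibility estimate already proved in Lemma~\ref{le:localconv1}.
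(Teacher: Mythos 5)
Your proposal is correct and follows essentially the same route as the paper: the same decomposition into the tree sub-network $\wt B^{b,h}_{+,n}$ plus negligible discarded edges, with the convergence of the tree edges reduced to the joint local convergence of the unfolded entries together with the corresponding row sums (the paper packages this as a recursive application of Lemma \ref{le:cols} and Lemma \ref{le:rows}, decoupling the column dependency by replacing $\rho_i$ with $\rho_i'=\rho_i-X_{i,\pi(1)}$ and checking that the difference is $o_P(1)$ --- the same observation you make about the conditioning affecting only asymptotically negligible entries of each revealed row). The ``delicate point'' you single out, namely the dependence introduced by the normalizing sums $\rho_u$ reappearing in every edge attached to an even vertex, is exactly the one the paper isolates and resolves in the same way.
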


\begin{proof}
  As in the proof of Lemma \ref{le:localconv1} we identify a modified
  network $\wt B^{b,h}_{\pm,n}$ which is easily seen to converge to the
  desired limit by an application of Lemma \ref{le:cols} and then prove that
  the difference between $B^{b,h}_{\pm,n}$ and $\wt B^{b,h}_{+,n}$ vanishes in
  probability. Let us start with $B^{b,h}_{+,n}$. Consider the modified
  network $\wt B^{b,h}_{+,n}$ obtained from $B^{b,h}_{+,n}$ by giving weight
  zero to any edge of the form $\{\bu,\bv\}$ such that $\bu\not\sim\bv$. Now,
  a recursive application Lemma \ref{le:cols} implies that the network $\wt
  B^{b,h}_{+,n}$ converges weakly, as $n\to\infty$, to the restriction of the
  random tree $\wh \cT_+$ to the vertex set $V_{b,h}$. There is a delicate
  point here due to the fact that while rows of $M$ are independent, the
  columns are not. The dependence stems from the normalizing sums $\rho_u$, but
  with a little care it can be shown to be negligible in the limit. We carry
  out the details below for the first two generations of the tree. Notice that
  once the convergence of $\wt B^{b,h}_{+,n}$ is established, one can conclude
  by the same stochastic domination argument mentioned in the proof of
  Lemma \ref{le:localconv1} above.
 
  Let us check the convergence of $\wt B^{b,h}_{+,n}$ in more detail. For
  simplicity of exposition we take $b=h=2$. Here the network consists of 7
  vertices, namely the $\eset, 1, 2, 11, 12, 21,22$. Let $G_n$ denote the
  network $\wt B^{2,2}_{+,n}$, and let $G_n(\bu,\bv)$ denote the weight of
  edge $\{\bu,\bv\}$. Then by definition 
  \begin{align*}
  G_n(\eset,1) &= X_{i_0,\pi(1)}/\rho_{i_0},\\ 
  G_n(\eset,2) &= X_{i_0,\pi(2)}/\rho_{i_0},\\ 
  G_n(1,11) &= X_{\pi_1(1),\pi(1)}/\rho_{\pi_1(1)},\\ 
  G_n(1,12) &= X_{\pi_1(2),\pi(1)}/\rho_{\pi_1(2)},\\ 
  G_n(2,21) &= X_{\pi_2(1),\pi(2)}/\rho_{\pi_2(1)},\\ 
  G_n(2,22) &= X_{\pi_2(2),\pi(2)}/\rho_{\pi_2(2)}.
  \end{align*}
  All other edges have weight zero. Set $\rho'_{i}:=\rho_i - X_{i,\pi(1)}$,
  $i\in[n]$, and define a new network $G'_n$ such that $G'_n$ coincides with
  $G_n$ except that on edges $\{2,21\}$ and $\{2,22\}$ one has $G'_n(2,21) =
  X_{\pi_2(1),\pi(2)}/\rho'_{\pi_2(1)}$ and $G_n(2,22) =
  X_{\pi_2(2),\pi(2)}/\rho'_{\pi_2(2)}$. Then there are no dependencies
  anymore and an application of Lemma \ref{le:cols} yields the desired
  convergence for the network $G'_n$. It remains to show that $G_n(2,21)-
  G'_n(2,21)$ and $G_n(2,22)- G'_n(2,22)$ converge to zero in probability.
  However, this follows from the fact that $a_n^{-1}X_{\pi_2(j),\pi(1)}\to 0$
  in probability for $j=1,2$.
 
\end{proof}

\subsection{Operator convergence}\label{se:opconv}

We recall a notion of local convergence that we already used in \cite{BCChr,BCChnh}. 
Let $\cD$ denote the dense subspace of finitely supported vectors in $\ell^2(V)$. 
Given a sequence of bounded self-adjoint operators $T_n$, a self-adjoint
operator $\cT$ such that $\cD$ is a core for $\cT$, and a sequence of
vertices $\bu_n\in V$, we write $(T_n,\bu_n)\to(\cT,\eset)$ if
there exists a sequence of bijections $\si_n:V\mapsto V$ such that
$\si_n(\eset)=\bu_n$ for all $n$ and such that 
\begin{equation}\label{eq:strong}
  \si_n^{-1}T_n\si_n\varphi \to \cT\varphi\,,\qquad n\to\infty,
\end{equation}
in $\ell^2(V)$, for any $\varphi\in \cD$. With slight abuse of notation, above we have used the
symbol $\si_n$ to indicate both the bijection on $V$ and the isometry
acting on $\ell^2(V)$ induced via $\si_n\ind_{\bv} = \ind_{\si_n(\bv)}$.
In Section \ref{se:spec} below, the convergence of self-adjoint operators defined above will be used to derive convergence of the corresponding spectral measures. 

Given the matrix $A$ in \eqref{eq:bipx}, we define the operator $T_n(X)$ as
follows. As usual, let $j\in\bbN$ denote the elements of $V$ belonging to the
first generation. Then, set
\begin{equation}\label{eq:biptnx}
  \scalar{\ind_{i}}{T_n(X) \ind_{j}}:=A_{i,j}\ind_{1\leq i,j\leq 2n}\,,\qquad i,j\in\bbN\,,
\end{equation}
and $\scalar{\ind_{\bu}}{T_n(X) \ind_{\bv}}=0$ for all $\bu,\bv\in V$ that are
not both in the first generation. By linearity, for every fixed $n$, \eqref{eq:biptnx} defines a random
bounded self-adjoint operator $T_n(X)$ on $\ell^2(V)$. Similarly, we define
the random bounded self-adjoint operators $T_n(M)$ by
\begin{equation}\label{eq:biptnm}
  \scalar{\ind_{i}}{T_n(M) \ind_{j}}:=B_{i,j}\ind_{1\leq i,j\leq 2n}\,,
  \qquad 
  i,j\in\bbN\,,
\end{equation}
for all $i,j\in\bbN$, and $\scalar{\ind_{\bu}}{T_n(M) \ind_{\bv}}=0$ for all
$\bu,\bv\in V$ that are not both in the first generation, where $B$ is the
matrix in \eqref{eq:bipm}.

\begin{theorem}\label{th:opconv}
  It is possible to realize the random operators $T_n(X),T^\pm_n(M)$ and
  $\cT_0,\hat\cT_\pm,\cT_\pm$ on the same probability space in such a way that
  the following holds almost surely, for all $i_0\in\bbN$, as $n\to\infty$:
  \begin{enumerate}[1)]
  \item  $(T_n(X),i_0)\to (\cT_0,\eset)$;
  \item $(T_n(M),i_0)\to (\hat \cT_+,\eset)$ and $(T_n(M),i_0+n)\to (\hat \cT_-,\eset)$.
  \item $(T_n(M),i_0)\to (\cT_+,\eset)$ and $(T_n(M),i_0+n)\to (\cT_-,\eset)$.
  \end{enumerate}
\end{theorem}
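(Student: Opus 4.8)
The plan is to deduce Theorem~\ref{th:opconv} from the finite-network convergence in Lemmas~\ref{le:localconv1} and~\ref{le:localconv2} by upgrading weak convergence of the truncated networks to almost sure strong-resolvent-type convergence of operators, via a Skorokhod coupling and a truncation/tail estimate. This follows the route used in \cite{BCChr,BCChnh}, so the burden here is to check that the two-stage alternating structure does not break the argument.

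First I would fix $i_0$ and note that, by definition \eqref{eq:biptnx}--\eqref{eq:biptnm}, the operators $T_n(X)$ and $T_n(M)$ are supported on the first generation, so studying the pair $(T_n(\cdot),i_0)$ amounts to studying the (random) weighted network on $[n]$ or $[2n]$ seen from $i_0$. The unfolding maps $\phi_\pm,\psi_\pm$ of Section~\ref{se:unfold} exhibit, for each fixed $b,h$, a coupling of the ball of radius $h$ (with branching truncated at $b$) around $i_0$ in this network with the finite network $A^{b,h}_{\pm,n}$ (resp. $B^{b,h}_{\pm,n}$). By Lemmas~\ref{le:localconv1} and~\ref{le:localconv2} these converge weakly to the restriction of $\cT_0$ (resp. $\wh\cT_\pm$) to $V_{b,h}$. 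Since $V_{b,h}$ is finite, a diagonal application of Skorokhod's representation theorem over a countable family of pairs $(b,h)$ (and over $i_0\in\bbN$) lets me put everything on one probability space so that, almost surely, for every $b,h$ the edge weights of $A^{b,h}_{\pm,n}$ converge to those of the limiting tree. I then let the bijections $\si_n$ of \eqref{eq:strong} be any extension to all of $V$ of the partial matching $\psi_\pm\circ(\text{canonical labelling})$ coming from the unfolding; concretely $\si_n$ sends $\eset\mapsto i_0$ (resp. $i_0+n$) and a finite ball onto the revealed vertices, extended arbitrarily (bijectively) outside. The ranked versions $\cT_\pm$ in part~3) are handled in exactly the same way, using that the unfolding in Section~\ref{se:unfold} already reveals the \emph{ranked} rearrangements of rows/columns, so its weak limit is the ranked tree; alternatively one notes $\cT_\pm$ is isomorphic to $\wh\cT_\pm$ via a measurable relabelling and composes it with $\si_n$.

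Next I would prove the strong convergence \eqref{eq:strong}. It suffices to prove it for $\varphi=\ind_\bk$, $\bk\in V$, by linearity and density. Fix $\bk$ and a large radius; write $\cT\ind_\bk=\sum_{\bv\sim\bk}\cT(\bk,\bv)\ind_\bv$, a convergent sum in $\ell^2(V)$ because $\cT$ is self-adjoint (Lemma~\ref{le:esssa}) with the explicit weights \eqref{eq:adj+}, \eqref{eq:adj-} — indeed for $\wh\cT_+$ the even-vertex rows sum to $1$ by \eqref{eq:adjnorm} and the odd-vertex weights $\hat\om^{(\bk)}_j$ decay like $\xi^{(\bk)}_j/q\sim(\text{const})\,j^{-1/\al}$, which is square-summable since $\al\in(0,1)$. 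Then I compare $\si_n^{-1}T_n(M)\si_n\ind_\bk$ with $\cT\ind_\bk$: on the finite truncation $V_{b,h}$ containing $\bk$ and its neighbours the coefficients converge by the almost sure convergence of the coupled finite networks; outside $V_{b,h}$ the $\ell^2$ mass of $\si_n^{-1}T_n(M)\si_n\ind_\bk$ must be shown to be uniformly small in $n$. That tail bound is the one place requiring real work: it is the analogue of the ``discarded weights go to zero'' step in Lemmas~\ref{le:localconv1}--\ref{le:localconv2} together with a tightness/summability estimate on the tail $\sum_{j>b}$ of the column-type weights, obtained from Lemma~\ref{le:cols} and the stochastic domination $qW_1$-to-PPP$(\al)$ appearing there. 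Combining the vanishing tail with the convergence of the finitely many leading coefficients gives \eqref{eq:strong}, hence $(T_n(M),i_0)\to(\wh\cT_+,\eset)$; the $i_0+n$ case and the $X$ case are identical with the obvious substitutions.

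The main obstacle I anticipate is precisely this uniform control of the tail of the revealed weights in the presence of the row-sum normalization: unlike the i.i.d.\ matrix $X$, the weights $\hat\om^{(\bk)}_j=X_{\pi(j),1}/(X_{\pi(j),1}+\hat\rho_{\pi(j)})$ attached to a column are not ranked, not independent of deeper structure through the $\rho$'s, and only become ranked after the rearrangement of Lemma~\ref{le:cols}; one must therefore show that the $\ell^2$-mass of the small (rank $>b$) weights of a column, summed over a generation, is uniformly negligible, and that the normalizing-sum dependence (the $G_n$ vs.\ $G'_n$ discrepancy already analysed for two generations in the proof of Lemma~\ref{le:localconv2}) remains negligible at all generations simultaneously. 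This is handled by a stochastic domination argument reducing the $\hat\om$'s to the ranked PPP$(\al)$ sequence $\xi_j/(\xi_j+q)$ (as in Lemma~\ref{le:cols} and the proof of Lemma~\ref{le:esssa}), for which $\sum_{j>b}(\xi_j/(\xi_j+q))^2\to0$ as $b\to\infty$ almost surely, and by a union bound over the finitely many vertices of each truncated generation together with a Borel--Cantelli argument to pass from convergence in probability to almost sure convergence along the coupled sequence. Once this is in place the rest is bookkeeping, and the three items of the theorem follow uniformly in $i_0$ because all the estimates are independent of the choice of initial vertex.
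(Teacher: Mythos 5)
Your treatment of parts 1) and 2) follows essentially the same route as the paper: Skorokhod representation applied to the finite networks of Lemmas \ref{le:localconv1}--\ref{le:localconv2}, a diagonal extraction to build the bijections $\si_n$, and an upgrade from entrywise (weak) convergence to the strong convergence \eqref{eq:strong} via a uniform-in-$n$ square-integrability/tail estimate on the revealed weights, which the paper also defers to the argument of \cite[Th.~2.10]{BCChnh} and the stochastic domination already used for Lemmas \ref{le:localconv1}--\ref{le:localconv2} and \ref{le:esssa}. That part is fine.

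There is, however, a genuine error in your primary justification of part 3). You assert that ``the unfolding in Section \ref{se:unfold} already reveals the ranked rearrangements of rows/columns, so its weak limit is the ranked tree.'' The unfolding ranks the entries of $X$, not of $M$. For rows this makes no difference (a common denominator $\rho_i$ preserves order), but for columns it does: $M_{j,1}=X_{j,1}/(X_{j,1}+\hat\rho_j)$ has a $j$-dependent denominator, so the $X$-ranking of a column is \emph{not} the $M$-ranking — this is exactly why Lemma \ref{le:localconv2} identifies the limit of $B^{b,h}_{\pm,n}$ as $\wh\cT_\pm$ (unranked $\hat\om$ weights) rather than $\cT_\pm$. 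Your fallback (``$\cT_\pm$ is isomorphic to $\wh\cT_\pm$ via a measurable relabelling, compose with $\si_n$'') is the right idea but is missing the key device: the relabelling that ranks the $\hat\om^{(\bk)}_j$ does not preserve the finite truncations $V_{b,h}$, since an $M$-large weight can sit at arbitrarily large $X$-rank. One therefore needs the quantitative rearrangement estimate \eqref{eq:omegasi} of Lemma \ref{le:cols} and a diagonal argument in which the truncation depth $b'=b'(n)\to\infty$ along with $n$, i.e.\ one replaces $\psi_\pm$ by $\psi_\pm\circ\eta_{b',h}$ for finite bijections $\eta_{b',h}$ approximating the ranking isomorphism, as the paper does. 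Without this, part 3) does not follow from parts 1)--2).
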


\begin{proof}
  We proceed as in \cite[Th.~2.10]{BCChnh}. By Lemma \ref{le:localconv1} and
  Skorokhod's representation theorem one can realize the operators $T_n(X)$ and
  $\cT_0$ on the same probability space in such a way that for all
  $b,h\in\bbN$, for all $\bu,\bv\in V_{b,h}$ as $n\to\infty$, almost surely
  \begin{equation}\label{eq:tnx1}
    \scalar{\ind_{\psi_+(\bu)}}{T_n(X) \ind_{\psi_+(\bv)}} %
    \to \scalar{\ind_{\bu}}{\cT_0 \ind_{\bv}},
  \end{equation}
  where the injective map $\psi_+:V_{b,h}\mapsto[2n]$ depends on $b,h$, and
  $n$. Thus, by a diagonal extraction, one can find a sequence of bijective
  maps $\si_n:V\mapsto V$ such that $\si_n(\eset)=i_0$, the fixed initial
  index, and almost surely
  \begin{equation}\label{eq:tnx2}
    \scalar{\ind_{\si_n(\bu)}}{T_n(X) \ind_{\si_n(\bv)}} \to \scalar{\ind_{\bu}}{\cT_0 \ind_{\bv}}.
  \end{equation}
  Thus, \eqref{eq:tnx2} expresses the convergence $\si_n^{-1}T_n(X)\si_n\to
  \cT_0$, in the usual sense of weak convergence of operators on a Hilbert
  space. To turn this into the required strong convergence in
  \eqref{eq:strong} it is then sufficient to prove almost sure uniform (in
  $n$) square-integrability of the vector
  $\{\scalar{\ind_\bu}{\si_n^{-1}T_n(X) \si_n\ind_\bv},\,\bu \in V\}$ for any
  fixed $\bv\in V$. The latter can be obtained as in \cite[Th.~2.10]{BCChnh}.
  This ends the proof of part 1.

  To prove part 2, we proceed in a similar way. By Lemma \ref{le:localconv2} and
  Skorokhod's representation theorem one has that for all $b,h\in\bbN$, for all
  $\bu,\bv\in V_{b,h}$ as $n\to\infty$, almost surely
  \begin{equation}\label{eq:tnx12}
    \scalar{\ind_{\psi_+(\bu)}}{T_n(M) \ind_{\psi_+(\bv)}} %
    \to \scalar{\ind_{\bu}}{\hat\cT_+ \ind_{\bv}}.
  \end{equation}
  As above one can find a sequence of bijective maps $\si_n:V\mapsto V$ such
  that $\si_n(\eset)=i_0$, and almost surely
  \begin{equation}\label{eq:tnx22}
    \scalar{\ind_{\si_n(\bu)}}{T_n(M) \ind_{\si_n(\bv)}} \to \scalar{\ind_{\bu}}{\hat\cT_+ \ind_{\bv}}.
  \end{equation}
  Now, \eqref{eq:tnx22} proves that $\si_n^{-1}T_n(M)\si_n\to\hat\cT_+$ weakly.
  The same integrability as above applies here and strong convergence follows.
  To prove that $(T_n(X),i_0+n)\to (\hat \cT_-,\eset)$, we repeat the steps
  above, recalling that now $\psi_-(\eset)=i_0+n$ and therefore the bijection
  $\si_n$ satisfies $\si_n(\eset)=i_0+n$.
  
  To prove part 3, observe that from the argument in the proof of Lemma
  \ref{le:cols} we know that there are bijections $\eta_{b',h}:V_{b',h}\mapsto
  V_{b',h}$ such that, as $b'\to\infty$, for any fixed $b,h\in\bbN$, the
  restriction of $\eta_{b',h}^{-1}\hat\cT_\pm\eta_{b',h}$ to $V_{b,h}$
  converges to the restriction of $\cT_\pm$ to $V_{b,h}$. One can then repeat
  the argument in \eqref{eq:tnx2} and \eqref{eq:tnx22} with $\psi_\pm$ replaced by
  the composed map $\psi_\pm\circ\eta_{b',h}$ with $b'=b'(n)$ a sequence with
  $b'(n)\to\infty$ as $n\to\infty$.
\end{proof}
    
\section{Convergence of singular values}\label{se:conv_singular}

Here we prove Theorem \ref{th:numz}. We shall need the following extension of
Theorem \ref{th:opconv}. For any $z\in\dC$, let $A(z)$ and $B(z)$ denote the
matrices (cf. \eqref{eq:bipx} and \eqref{eq:bipm})
\begin{equation}\label{eq:bipxz}
  A(z)=\begin{pmatrix}0& X-z\\ X^\top -\bar z & 0
  \end{pmatrix}\,,\qquad B(z)=\begin{pmatrix}0& M-z\\ M^\top -\bar z & 0
  \end{pmatrix}.
\end{equation}
Let also $T_n(X,z)$ and $T_n(M,z)$ denote the associated adjacency operators
defined as in \eqref{eq:biptnx} and \eqref{eq:biptnm} respectively with $A$ replaced
by $A(z)$ and $B$ by $B(z)$. To describe the local convergence of these new
networks we proceed as follows. Let us start with $A(z)$. We define the
limiting tree $\cT_0(z)$ recursively. Consider the previously defined tree
$\cT_0$ rooted at $\eset$. Call $\cT^1(z)$ the new tree obtained by appending
an outgoing directed edge with weight $-z$ (respectively $-\bar z$) to every
even vertex (respectively, odd vertex) of the tree, including the root. Thus
$\cT_0^1(z)$ is a tree with pending nodes (leaves) at the endpoints of the
added directed edges. Next take i.i.d.\ copies of $\cT_0$ and append one of
them to each pending node in $\cT_0^1(z)$. Now, call $\cT_0^2(z)$ the tree
obtained by appending an outgoing directed edge with weight $-z$ (respectively
$-\bar z$) to every new even vertex (respectively, odd vertex) of the tree,
where new means that it was not a vertex of $ \cT_0^1(z)$ (the leaves of
$\cT_0^1(z)$ do not count as new). Next take i.i.d.\ copies of $\cT_0$ and
append one of them to each pending node in $\cT_0^2(z)$. Recursively, we then
construct trees $\cT_0^k(z)$, $k=1,2,\dots$. Repeating ad libitum this
procedure yields the random tree that we call $\cT_0(z)$.

The trees $\hat\cT_\pm(z)$ are defined in a similar way. For $\hat\cT_+(z)$,
we start with $\hat\cT_+$ and produce the tree $\cT_+^1(z)$ by appending an
outgoing directed edge with weight $-z$ (respectively $-\bar z$) to every even
vertex (respectively, odd vertex) of the tree, including the root. Then we
append i.i.d. copies of $\hat\cT_+$ (respectively $\hat\cT_-$) to every even
(respectively odd) pending node, and we proceed recursively as above. Here a
leaf is even (respectively odd) if it is the endpoint of a directed edge
emanating from an odd (respectively even) vertex. To define $\hat\cT_-(z)$ we
proceed exactly as above with the role of even and odd interchanged. If
instead we use $\cT_\pm$, the ranked versions of $\hat\cT_\pm$, we obtain
trees that we will call $\cT_\pm(z)$.

All the above defined trees naturally define, via adjacency, linear operators
on the Hilbert space $\ell^2(V)$. To ensure the symmetry we use the convention
that when there is a directed edge $(\bk,\bk')$ with weight $z$ there is also
the opposite edge $(\bk',\bk)$ with weight $\bar z$. The following theorem is
a direct generalization of Lemma \ref{le:esssa} and Theorem \ref{th:opconv}. We
omit the details of the proof.

\begin{theorem}\label{th:opconvz}
  It is possible to realize the random operators $T_n(X,z),T^\pm_n(M,z)$ and
  $\cT_0(z)$, $\hat\cT_\pm(z)$, $\cT_\pm(z)$ on the same probability space in
  such a way that the following holds almost surely, for all fixed $z\in\dC$,
  $i_0\in\bbN$, as $n\to\infty$:
  \begin{enumerate}[1)]
  \item The operators $\cT_0(z),\hat\cT_\pm(z),\cT_\pm(z)$ are self-adjoint;
  \item  $(T_n(X,z),i_0)\to (\cT_0(z),\eset)$;
  \item $(T_n(M,z),i_0)\to (\hat \cT_+(z),\eset)$ and $(T_n(M,z),i_0+n)\to (\hat \cT_-(z),\eset)$.
  \item $(T_n(M,z),i_0)\to (\cT_+(z),\eset)$ and $(T_n(M,z),i_0+n)\to (\cT_-(z),\eset)$.
  \end{enumerate}
\end{theorem}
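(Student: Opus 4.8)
The plan is to mirror, with the obvious bookkeeping modifications, the proof of Theorem~\ref{th:opconv} (itself modeled on \cite[Th.~2.10]{BCChnh}). The structural point is that $A(z)$ and $B(z)$ differ from $A$ and $B$ only by the deterministic diagonal shift $-z$ (respectively $-\bar z$) on the two blocks, and on the level of trees this shift is exactly what produces the extra pending edges of weight $-z$ and $-\bar z$ in the recursive construction of $\cT_0(z)$, $\hat\cT_\pm(z)$, $\cT_\pm(z)$. Concretely, I would first establish the finite-network analogue of Lemmas~\ref{le:localconv1} and~\ref{le:localconv2}: define $A(z)^{b,h}_{\pm,n}$ and $B(z)^{b,h}_{\pm,n}$ via the same unfolding maps $\psi_\pm$, but now carrying along the diagonal entries, and observe that these networks are obtained from $A^{b,h}_{\pm,n}$, $B^{b,h}_{\pm,n}$ by adjoining, at each revealed vertex $\bk$, one extra neighbour whose connecting edge carries the deterministic weight $-z$ or $-\bar z$ (the ``pending node''). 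Since the added weights are deterministic, the weak convergence of the heavy-tailed part is untouched; one invokes Lemmas~\ref{le:localconv1}, \ref{le:localconv2} for the random edges and simply reads off convergence of the augmented networks to the finite restrictions of $\cT_0(z)$, $\hat\cT_\pm(z)$, $\cT_\pm(z)$.

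Next, exactly as in Theorem~\ref{th:opconv}, I would pass from finite-network convergence to operator convergence. By Skorokhod's representation theorem realize all the networks and their limits on one probability space so that the matrix entries converge almost surely; a diagonal extraction over $b,h$ (and, for part~4, over an auxiliary scale $b'=b'(n)\to\infty$ as in the proof of part~3 of Theorem~\ref{th:opconv}) produces bijections $\si_n:V\to V$ with $\si_n(\eset)=i_0$ (or $i_0+n$) and $\scalar{\ind_{\si_n(\bu)}}{T_n(\cdot,z)\ind_{\si_n(\bv)}}\to\scalar{\ind_\bu}{\cT_\bullet(z)\ind_\bv}$ for all $\bu,\bv$. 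This is weak operator convergence; to upgrade it to the strong convergence \eqref{eq:strong} one needs almost sure uniform (in $n$) square-integrability of the column vectors $\{\scalar{\ind_\bu}{\si_n^{-1}T_n(\cdot,z)\si_n\ind_\bv}\}_{\bu\in V}$, which follows verbatim from the corresponding estimate in \cite[Th.~2.10]{BCChnh}, since adding one deterministic weight $-z$ per column changes the $\ell^2$ norm of each column by at most $|z|$.

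For the self-adjointness claim (part~1 of Theorem~\ref{th:opconvz}) the strategy is to reduce to Lemma~\ref{le:esssa}: the operators $\cT_0(z)$, $\hat\cT_\pm(z)$, $\cT_\pm(z)$ are the corresponding shift-free operators $\cT_0$, $\hat\cT_\pm$, $\cT_\pm$ on a larger tree plus a bounded perturbation. Indeed the directed edges of weight $-z,-\bar z$ together constitute a bounded symmetric operator of norm $|z|$ (each vertex meets at most two such edges, namely the one pending from it and the one pending from its parent-type neighbour), while the remaining weights form a tree of the same type as in Lemma~\ref{le:esssa} (an i.i.d.\ forest of copies of $\cT_0$, respectively the alternating trees), which is essentially self-adjoint on $\cD$ by that lemma. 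By the Kato--Rellich theorem a bounded symmetric perturbation of a self-adjoint operator is self-adjoint on the same domain, and $\cD$ remains a core. The main obstacle, as with Theorem~\ref{th:opconv}, is the dependency issue in the Markov case: the normalizing sums $\rho_u$ couple columns, so the ``delicate point'' flagged in the proof of Lemma~\ref{le:localconv2} must be handled generation by generation by the same device of replacing $\rho_u$ with a slightly truncated $\rho'_u$ and showing the discrepancy is $o_{\mathbb{P}}(a_n)$ hence negligible after normalization; introducing $-z$ does not interact with this argument, but it must be carried through the $z$-dependent trees without error. This is why the paper, reasonably, omits the details.
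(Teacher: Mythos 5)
Your proposal is correct and follows exactly the route the paper intends: the paper gives no proof of Theorem~\ref{th:opconvz}, stating only that it is a ``direct generalization of Lemma~\ref{le:esssa} and Theorem~\ref{th:opconv}'', and your argument is a faithful expansion of that remark (finite-network convergence with the deterministic diagonal carried along, Skorokhod plus diagonal extraction, the square-integrability upgrade from weak to strong convergence, and self-adjointness by viewing the $z$-edges as a bounded symmetric perturbation of a forest of copies of the operators already handled in Lemma~\ref{le:esssa}). The only cosmetic remark is that the Kato--Rellich reduction is a slightly slicker packaging than literally rerunning the cutoff criterion of \cite[Prop.~A.2]{BCChr} on the $z$-dependent trees, but it is equivalent in substance; note also that by construction the pending leaves receive no further outgoing $z$-edge, so the $z$-edges form a perfect matching and your norm bound $|z|$ is in fact exact.
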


%
%


\subsection{The spectral measure}\label{se:spec}

For every $z\in\dC$, let $\mu_\pm^{(z)}$ denote the spectral measure at the
root of the self-adjoint operator $\hat\cT_\pm(z)$. This is the unique
probability measure on $\dR$ such that for every $\eta\in\dC_+:=\{w\in\dC:
\,\Im(w)>0\}$:
\begin{equation}\label{eq:resol}
  \scalar{\de_\eset}{(\hat\cT_\pm(z)-\eta )^{-1}\de_\eset} = m_{\mu_\pm^{(z)}}(\eta)
  = \int \frac{\mu_\pm^{(z)}(\dd x)}{x - \eta},
\end{equation}
where $(\hat\cT_\pm(z)-\eta )^{-1}$ denotes the resolvent of $\hat\cT_\pm(z)$
at $\eta\in\dC_+$, and for any probability measure $\mu$ on $\dR$ we let
$m_\mu:\dC_+\mapsto\dC_+$ denote the Cauchy-Stieltjes transform defined by
$\int\frac{\mu(\dd x)}{x - \eta}$. With these notation,
$m_{\mu_\pm^{(z)}}(\eta)$ is a bounded random variable for each
$\eta\in\dC_+$. From elementary spectral theory we know that for each fixed
$n\in\bbN$ one has
\begin{equation}\label{eq:resol1}
  \frac1{2n}\sum_{i=1}^{2n}\scalar{\de_i}{(T_n(M,z)-\eta )^{-1}\de_i} = \tr[(T_n(M,z)-\eta )^{-1}]
  = m_{\mu_n^{(M,z)}}(\eta),
\end{equation}
where $\mu_n^{(M,z)}$ denotes the empirical spectral distribution of $T_n(M,z)$, i.e.
\begin{equation}\label{eq:resol2}
  \mu_n^{(M,z)}=\frac1{2n}\sum_{i=1}^{2n}\de_{\la_{i,z}},
\end{equation}
if $\la_{i,z}\geq\la_{i+1,z} $ are the ranked eigenvalues of $B(z)$ from
\eqref{eq:bipxz}. Moreover, it is well known that $\mu_n^{(M,z)}$ is symmetric
on $\bbR$ and the singular values $s_{i,z}$ of $M-z$ satisfy
$s_{i,z}=\la_{i,z}$, $i=1,\dots,n$.

From \cite[Th.~VIII.25(a)]{reedsimon}, see also \cite[Th.~2.2]{BCChr}, it is
known that the local convergence $(T_n(M,z),i_0)\to (\hat \cT_+(z),\eset)$ and
$(T_n(M,z),i_0+n)\to (\hat \cT_-(z),\eset)$ expressed in Theorem
\ref{th:opconvz} implies convergence of resolvents, i.e.\ almost surely
\begin{gather}\label{eq:resol3}
  \scalar{\de_\eset}{(\hat\cT_+(z)-\eta )^{-1}\de_\eset} %
  = \lim_{n\to\infty}\scalar{\de_1}{(T_n(M,z)-\eta )^{-1}\de_1}\\
  \scalar{\de_\eset}{(\hat\cT_-(z)-\eta )^{-1}\de_\eset} %
  = \lim_{n\to\infty}\scalar{\de_{1+n}}{(T_n(M,z)-\eta )^{-1}\de_{1+n}}\label{eq:resol4}
\end{gather}
In particular, the above convergence holds in distributions. Since the
random variables are bounded, one has convergence of the expected values.
Thus, using linearity and exchangeability in \eqref{eq:resol1}, it follows that
for any $\eta\in\dC_+$ \begin{align} & m_{ \bbE\,\mu_n^{(M,z)}}(\eta)= \bbE
  \,m_{\mu_n^{(M,z)}}(\eta)\nonumber \\& =
  \tfrac12\bbE\scalar{\de_1}{(T_n(M,z)-\eta )^{-1}\de_1}
  + \tfrac12\bbE\scalar{\de_{1+n}}{(T_n(M,z)-\eta )^{-1}\de_{1+n}}\nonumber\\
  &\rightarrow \tfrac12\bbE\,m_{\mu_+^{(z)}}(\eta) +
  \tfrac12\bbE\,m_{\mu_-^{(z)}}(\eta) = m_{\bar \mu^{(z)}}(\eta)\,,\qquad
  n\to\infty\,,\label{eq:resol7}
\end{align}
where $\bar \mu^{(z)}$ denotes the probability measure
\begin{equation}\label{eq:resol6}
  \bar \mu^{(z)}= \tfrac12\bbE\,\mu_+^{(z)}+\tfrac12\bbE\,\mu_-^{(z)}.
\end{equation}

\subsection{Proof of Theorem \ref{th:numz}}

From \eqref{eq:resol7} we know that 
\[
m_{ \bbE\,\mu_n^{(M,z)}}(\eta)\to m_{\bar \mu^{(z)}}(\eta),
\]
for all $\eta\in\dC_+$. It is well known that convergence of the
Cauchy-Stieltjes transform implies weak convergence. Therefore
$\bbE\mu_n^{(M,z)}\weak\bar \mu^{(z)}$ as $n\to\infty$. Since $M$ has
independent rows, thanks to the general concentration result in
\cite[Lem.~C.2]{BCChnh}, it follows that $\mu_n^{(M,z)}\weak\bar \mu^{(z)}$ as
$n\to\infty$, almost surely. Finally, as observed after \eqref{eq:resol2}, $
\nu_{M,z}$ is the image of $\mu_n^{(M,z)}$ under reflection. Letting
$\nu_{\al,z}$ denote the image of $\bar \mu^{(z)}$ under reflection, we have
proved that $ \nu_{M,z}\weak \nu_{\al,z}$ as $n\to\infty$, almost surely, for
each $z\in\dC$. That
$\nu_{\al,z}$ depends on $z\in\dC$ only through
the absolute value $|z|$, and has bounded exponential moments is shown in Lemma
\ref{le:expmom} below. 
Unbounded support is proven in Lemma \ref{le:usup}. 
This will complete the proof of Theorem \ref{th:numz}. 
In Section \ref{se:rde} we shall discuss 
  the recursive
distributional equations
satisfied by the Cauchy-Stieltjes transforms $\mu_\pm^{(z)}(\eta)$. 
\subsection{Properties of the singular values distribution}


Here we use estimates on moments of $\nu_{\al,z}$ to prove that all
exponential moments of $\nu_{\al,z}$ are finite, that $\nu_{\al,z}$ depends on $z\in\dC$ only through
the absolute value $|z|$, and that the support of
$\nu_{\al,z}$ is unbounded.

\begin{lemma}[Singular values moments]\label{le:expmom}
  For any $\la>0$ and any $z\in\dC$, $\al\in(0,1)$, 
  \begin{align}\label{eq:expmom1}
    \int_0^\infty \e^{\la t}\nu_{\al,z}(dt) <\infty. 
  \end{align}
Moreover $\nu_{\al,z}=\nu_{\al,w}$ for all $z,w\in\dC$ such that  $|z|=|w|$.
\end{lemma}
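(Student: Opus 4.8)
The plan is to prove the two assertions separately. For the exponential moments \eqref{eq:expmom1}, recall from the proof of Theorem \ref{th:numz} that $\nu_{\al,z}$ is the reflection of $\bar\mu^{(z)} = \tfrac12\bbE\,\mu_+^{(z)} + \tfrac12\bbE\,\mu_-^{(z)}$, the spectral measure at the root of the operators $\hat\cT_\pm(z)$. Since these measures are symmetric on $\bbR$, it suffices to control $\int t^{2k}\,\bar\mu^{(z)}(\dd t) = \tfrac12\bbE\scalar{\de_\eset}{\hat\cT_+(z)^{2k}\de_\eset} + \tfrac12\bbE\scalar{\de_\eset}{\hat\cT_-(z)^{2k}\de_\eset}$, and show this grows at most like $C^k k!$ (or even slower), which yields convergence of $\sum_k \la^{2k}\int t^{2k}\,\bar\mu^{(z)}(\dd t)/(2k)!$ and hence \eqref{eq:expmom1}. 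The key point is that $\scalar{\de_\eset}{\hat\cT_\pm(z)^{2k}\de_\eset}$ is a sum over closed walks of length $2k$ in the tree starting and ending at the root, weighted by products of edge weights. The crucial structural fact, already recorded in \eqref{eq:adjnorm}, is that for every even vertex $\bk$ the weights of the edges incident to $\bk$ sum to $1$ (the Markov/stochastic normalization survives in the tree limit); likewise each $\hat\om$-edge has weight in $[0,1]$, and the directed edges carry weight of modulus $|z|$. A walk of length $2k$ visits at most $k$ distinct non-root vertices, so one can bound the walk sum by organizing walks according to the tree they explore and using that the total outgoing weight from each even vertex is $1$: this should give $\int t^{2k}\,\bar\mu^{(z)}(\dd t) \leq (1+|z|)^{2k} C_k$ where $C_k$ is a Catalan-type bound on the number of shapes, namely $C_k \leq 4^k$. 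That polynomial-in-$k$-base growth is more than enough for all exponential moments to be finite.

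For the rotational invariance $\nu_{\al,z}=\nu_{\al,w}$ when $|z|=|w|$, write $w = \e^{\ibf\theta} z$ and exhibit a measure-preserving map between the relevant random structures. The clean way is at the level of the operators: the trees $\hat\cT_\pm(z)$ and $\hat\cT_\pm(w)$ have the same underlying random tree $\cT_0$ (or its weighted modifications $\hat\cT_\pm$), and differ only in the deterministic directed pendant edges, which carry weight $-z$ (resp.\ $-\bar z$) for one and $-w=-\e^{\ibf\theta}z$ (resp.\ $-\bar w$) for the other. Introduce the diagonal unitary $U$ on $\ell^2(V)$ that multiplies $\ind_\bk$ by $\e^{\ibf j_\bk\theta/2}$, where $j_\bk$ is a suitable integer-valued function of the vertex $\bk$ (for instance, $+1$ or $0$ according to whether $\bk$ lies at an even or odd ``level'' in the bipartite structure, tracking which copy — the $z$ block or the $\bar z$ block — the vertex belongs to); one checks that conjugation by $U$ turns the pendant weight $-z$ into $-\e^{\ibf\theta}z$ and $-\bar z$ into $-\e^{-\ibf\theta}\bar z$ while leaving all the non-negative tree weights unchanged. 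Since $U$ fixes $\de_\eset$, the spectral measure at the root is unchanged, so $\mu_\pm^{(z)} = \mu_\pm^{(w)}$ pathwise, hence $\bar\mu^{(z)} = \bar\mu^{(w)}$ and $\nu_{\al,z} = \nu_{\al,w}$. Equivalently, and perhaps more transparently, one can run the whole argument at the finite-$n$ level: the matrix $B(w)$ is related to $B(z)$ by conjugation with a diagonal unitary (multiply the first $n$ coordinates by $1$ and the last $n$ by $\e^{-\ibf\theta}$, say), so $\mu_n^{(M,w)} = \mu_n^{(M,z)}$ exactly for every $n$, and the identity passes to the limit.

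I expect the main obstacle to be the bookkeeping in the walk-counting bound for the moments: one must be careful that the $\hat\om$-weights and the $\hat\zeta$-weights, together with the directed $z$-edges, really do combine so that each ``step away from the root at an even vertex'' contributes a total weight $1$ after summing over children, and that the occasional step along a $z$-edge only costs a factor $|z|$. The relations \eqref{eq:adjnorm} and \eqref{eq:zetasom1}, plus the fact that $\hat\om\in[0,1]$, are exactly what makes this work, but matching them against the alternating even/odd structure and the inserted pendant edges requires care. Everything else — reflection symmetry, the unitary conjugation argument, passing expectations through bounded resolvents — is routine.
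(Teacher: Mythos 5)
Your treatment of the exponential moments contains a genuine error, not merely a bookkeeping issue. You claim $\int t^{2k}\,\bar\mu^{(z)}(\dd t)\leq (1+|z|)^{2k}4^k$, a purely geometric bound in $k$; this would force $\nu_{\al,z}$ to be supported in $[0,2(1+|z|)]$, contradicting Lemma \ref{le:usup} (unbounded support). The normalization \eqref{eq:adjnorm} holds only at \emph{even} vertices. At an odd vertex $\bk$ the outgoing weights $\hat\om^{(\bk)}_j=\xi^{(\bk)}_j/(\xi^{(\bk)}_j+S_{\bk j})$ each lie in $[0,1]$, but their sum is an \emph{unbounded} random variable, so summing a closed walk over the choice of child at an odd vertex costs a factor of order $\sum_j(\hat\om^{(\bk)}_j)^2=\Psi$ rather than $1$. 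The paper's proof keeps exactly this: it organizes walks by Dyck paths and bounds $m_{2k}\leq |\cD_k|\,C(z,k)$ with $C(z,k)=\dE\left[(|z|^2+(1\vee\Psi))^k\right]$, and then --- this is the step entirely missing from your plan --- shows that $\sum_k (2\la)^{2k}C(z,k)/(2k)!<\infty$ by reducing it to $\dE[\e^{2\la\sum_j\om_j}]<\infty$, which follows from Campbell's formula for the PPP($\al$). Without that probabilistic input the lemma does not follow; the moments of $\nu_{\al,z}$ genuinely grow faster than geometrically.

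For the rotational invariance, your operator-level gauge argument is correct and is a genuinely different route from the paper's: assigning a constant phase to each tree-edge-connected component of $\hat\cT_\pm(z)$ (phase $0$ on the root's component, shifted by $\pm\theta$ across each pendant $z$ or $\bar z$ edge) produces a diagonal unitary fixing $\de_\eset$ that leaves the non-negative weights unchanged and rotates $z$ to $\e^{\ibf\theta}z$, so $\mu_\pm^{(z)}=\mu_\pm^{(w)}$ pathwise. The paper instead argues that $\bar\mu^{(z)}$, having finite exponential moments, is moment-determined, and that in a closed walk on a tree each edge weight $u$ enters only through $|u|^2$, so the moments depend on $z$ only via $|z|$; note the paper's route requires \eqref{eq:expmom1} first, whereas yours does not --- a small advantage. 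However, your ``equivalent, more transparent'' finite-$n$ version is false: conjugating $B(z)$ by $\mathrm{diag}(I_n,\e^{\ibf\theta}I_n)$ yields the bipartization of $\e^{\ibf\theta}M-w$, not of $M-w$, and $\nu_{M,z}$ really does depend on $\arg z$ at finite $n$ (e.g.\ $M-1$ is singular while $M+1$ generically is not). The invariance emerges only in the limit.
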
 
\begin{proof}  Define the moments
  \[
  m_n=\int_{0}^\infty x^n\nu_{\al,z}(\dd x).
  \] 
  %
  For $n$ even, we have
  \begin{equation}\label{eq:mns}
    m_n=\int_{-\infty}^\infty x^n\bar\mu^{(z)}(\dd x) %
    =\frac12\,\dE\scalar{\de_\eset}{\hat\cT_+(z) ^{n}\de_\eset}  %
    + \frac12\,\dE\scalar{\de_\eset}{\hat\cT_+(z) ^{n}\de_\eset}.
  \end{equation}
  Let $\G_k$ denote the set of all paths of length $n=2k$ on the tree which
  start and end at the root. Let also $w_\pm(\g)$ denote the weight of one
  path $\g\in\G_k$, so that
  \[
  \dE\scalar{\de_\eset}{\hat\cT_\pm(z) ^{2k}\de_\eset} = \sum_{\g\in\G_k} \dE [w_\pm(\g)]. 
  \]
  The weight $w_\pm(\g)$ can be written as the
  product 
  \begin{equation}\label{eq:mns0}
  \hat\cT_{\pm}(\eset,\g_1)\hat\cT_{\pm}(\g_1,\g_2)\cdots\hat\cT_{\pm}(\g_{n-1},\eset),
  \end{equation}
  where $\g_1,\dots,\g_{n-1}$ denote the ordered vertices visited by the path
  after leaving $\eset$ and before returning to $\eset$, and for simplicity we have omitted the explicit dependency on $z$ from our notation.
  
  We are going to prove that for any $k\in\dN$,
  \begin{equation}\label{eq:mnk}
    m_{2k}\leq  2^{2k} 
    C(z,k)\,,\qquad C(z,k):=\dE\left[(|z|^2+(1\vee\Psi))^k\right],
  \end{equation}
  where $\Psi:=\sum_{j=1}^\infty \om^2_j$, and $\{\om_j\}$ is defined in
  \eqref{eq:omegas}.

  Once \eqref{eq:mnk} is established we may write
  \begin{align*}
    \int_0^\infty \e^{\la t}\nu_{\al,z}(dt) 
    &\leq 2 \int_0^\infty \cosh(\la t)\nu_{\al,z}(dt)\\
    & = 2 \sum_{k=0}^\infty\frac{\la^{2k}m_{2k}}{(2k)!} %
    \leq 2 \sum_{k=0}^\infty\frac{(2\la)^{2k}C(z,k)}{(2k)!}.
  \end{align*}
  From the definition of $C(z,k)$ one has $C(z,k)\leq C(z,2k)$ and therefore
  \[
  \sum_{k=0}^\infty\frac{(2\la)^{2k}C(z,k)}{(2k)!} \leq \dE\left[\cosh(2\la(|z|^2+(1\vee\Psi) )\right]
  \leq \e^{2\la(1+|z|^2)}\dE\left[\e^{2\la \sum_k\om_k}\right],
  \]
  where we use $\cosh(t)\leq \e^{t}$, $t\geq 0$, and $(1\vee\Psi)\leq
  1+\sum_k\om_k$. On the other hand, from Campbell's formula we have, for any
  $\la\in\dR$:
  \[
  \dE\left[\e^{2\la\sum_k\om_k}\right] = \exp{\left(\int_0^\infty
      \Big(\e^{\tfrac{2\la\,t}{t+q}}-1\Big)\frac{\al \,dt}{t^{1+\al}}
    \right)}<\infty,\] which proves \eqref{eq:expmom1}.

  It remains to prove \eqref{eq:mnk}. To illustrate the computation, let us first
  consider the cases $n=2,4$ in detail. When $n=2$,
  \[
  \sum_{\g\in\G_1} \dE[ w_+(\g)] %
  = |z|^2+\sum_{k=1}^\infty \dE[\zeta_k ^2]\,, %
  \quad    \sum_{\g\in\G_1} \dE [w_-(\g)] =|z|^2+ \sum_{k=1}^\infty \dE[\om_k ^2],
  \]
  where $\{\zeta_k\},\{\om_k\}$ are defined in \eqref{eq:pda} and \eqref{eq:omegas}
  respectively. Since $0\leq \zeta_k,\om_k\leq 1$, $\sum_k\zeta_k=1$ and
  $\dE[\Psi]\leq \sum_k\dE[\om_k]=1$, it follows that 
  \[
  \sum_{\g\in\G_2} \dE[w_\pm(\g)]\leq |z|^2+\dE[1\vee\Psi]=C(z,1).
  \]
  If $n=4$ we can partition the paths in $\G_{2}$ into two families: the ones
  that reach distance $2$ from the root, say $\G_{2}^2$, and the ones that do
  not, say $\G_{2}^1$. The first term gives
  \[
  \sum_{\g\in\G_2^2} \dE [w_+(\g)]= |z|^2 \sum_{j=1}^\infty\dE\left[\om_j
    ^2\right] + |z|^2\sum_{k=1}^\infty\dE\left[\zeta_k ^2\right] +
  \sum_{k,j=1}^\infty\dE\left[\zeta_k ^2\om_j ^2\right],
  \]
  where $\{\zeta_k\},\{\om_k\}$ are independent, and
  \[
  \sum_{\g\in\G_2^2} \dE [w_-(\g)]= |z|^2 \sum_{j=1}^\infty\dE\left[\zeta_j
    ^2\right] + |z|^2\sum_{k=1}^\infty\dE\left[
    \om_k^2
  \right] +
  \sum_{k,j=1}^\infty\dE\left[\left(\tfrac{\xi_k}{\xi_k+S_k}\right)^2
    \left(\tfrac{\xi^{(k)}_j}{\xi_k+S_k}\right)^2\right],
  \]
  where $\{\xi_k\}, \{\xi^{(k)}_j\}$ are independent processes with laws
  PPP($\al$), and $S_k=\sum_{j=1}^\infty\xi^{(k)}_j$; see \eqref{eq:zetasom}. In
  particular, using $\xi^{(k)}_j/(\xi_k+S_k)\leq \xi^{(k)}_j/S_k$, one has
  \[
  \sum_{\g\in\G_2^2} \dE [w_\pm(\g)] \leq 1 +2 |z|^2\leq C(z,2).
  \]
  The second family of paths contributes
  \begin{gather*}
    \sum_{\g\in\G_2^1} \dE [w_+(\g)] =|z|^4+2|z|^2\sum_{k}^\infty\dE[\zeta_k^2]+ \sum_{k,j=1}^\infty\dE[\zeta_k^2\zeta_j^2], \\
    \sum_{\g\in\G_2^1} \dE [w_-(\g)] =|z|^4+2|z|^2\sum_{k}^\infty\dE[\om_k^2]+
    \sum_{k,j=1}^\infty\dE[\om_k^2\om_j^2].
  \end{gather*}
  Using again  $0\leq \zeta_k,\om_k\leq 1$, 
  \[ 
  \sum_{\g\in\G_2^1} \dE [w_\pm(\g)]
  \leq \dE\left[(|z|^2+(1\vee \Psi))^2\right] = C(z,2).
  \] 
  To estimate the general case $n=2k$, let $\cD_k$ denote the set of Dick
  paths of length $2k$, i.e.\ the set of all $\dZ_+$ paths staring at $0$ and
  ending at $0$ after $2k$ steps. In particular,
  $|\cD_k|=\tfrac1{k+1}\binom{2k}{k}$. 
  The set $\G_k$ can be partitioned according to the distance from the root at
  each step. This is encoded by a single element of $\cD_k$, with the natural
  correspondence: say $\G_k^\eta$, where
  $\eta=(0,\eta_1,\dots,\eta_{2k-1},0)\in\cD_k$, is the set of all paths
  $\g\in\G_k$ such that after $j$ steps, $\g$ has distance $\eta_j$ from the
  root, for all $j=1,\dots,2k-1$. Then
  \[
  \sum_{\g\in\G_k} \dE [w_\pm(\g)]  = \sum_{\eta\in\cD_k}\sum_{\g\in\G^\eta_k} \dE [w_\pm(\g)].
  \]
  For a fixed $\eta$, we can write the path $\g\in \G^\eta_k$ as
  $\g=(\eset,\g_1,\dots,\g_{2k-1},\eset)$ where $\g_i$ is the label of the
  vertex after $i$ steps, so that $\g_i$ is at distance $\eta_i$ from the
  root. Now consider the set of indexes $i_1,\dots,i_\ell$ such that
  $\eta_{i_m}=\max\{\eta_1,\dots,\eta_{2k-1}\}$. Reasoning as above we can
  write
  \begin{align*}
    \sum_{\g_{i_1}\,\dots,\g_{i_\ell}}\dE [w_\pm(\g)] 
    & \leq\dE[w_\pm(\g\smallsetminus(\g_{i_1},\dots,\g_{i_\ell}))]C(z,\ell),
  \end{align*}
  where we use the notation $\g\smallsetminus(\g_{i_1},\dots,\g_{i_\ell})$ for the path
  $\g$ once the vertices $\g_{i_1}\,\dots,\g_{i_\ell}$ have been removed.
  Notice that this is now a path of length $2k-2\ell$. Then we may proceed by
  recursion.  Using $C(z,\ell)C(z,m)\leq C(z,\ell+m)$, which follows by H\"older inequality,  we obtain
  \begin{align*}
    \sum_{\g\in\G^\eta_k} \dE [w_\pm(\g)] \leq C(z,k),
  \end{align*} 
  for every fixed $\eta\in\cD_k$. In conclusion this proves
  \begin{equation}\label{eq:reco}
    m_{2k}\leq |\cD_k|\,C(z,k)\,.
  \end{equation}
  This ends the proof of \eqref{eq:expmom1}.
  
  To show the last assertion in the lemma, notice that it suffices to show that $\bar\mu^{(z)}=\bar\mu^{(w)}$, 
  for all $z,w\in\dC$ with $|z|=|w|$. Clearly, odd moments of $\bar\mu^{(z)}$ are equal to zero. Since $\bar\mu^{(z)}$ has finite exponential moments, it is uniquely determined by the 
  numbers $m_n$, $n$ even, as in \eqref{eq:mns}; see e.g.\ \cite{reedsimon} for sufficient conditions in the classical moments problem. From the expression of the path weights $w_\pm(\g)$ in \eqref{eq:mns0}, we see that  each edge weight $u$ of the tree  appears only through the value $|u|^2$. Therefore, the moments $m_n$ do not change if we replace $z$ by $w$ with $|z|=|w|$.   
\end{proof}

\begin{lemma}[Support]\label{le:usup}
  For any $z\in\dC$, $\al\in(0,1)$, the support of $\nu_{\al,z}$ is unbounded.
\end{lemma}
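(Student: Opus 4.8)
The plan is to show that $\nu_{\al,z}$ (equivalently $\bar\mu^{(z)}$) has unbounded support by exhibiting, for arbitrarily large $R>0$, a non-negligible amount of mass of $\bar\mu^{(z)}$ outside $[-R,R]$. Since $\bar\mu^{(z)}$ is the average of the expected spectral measures $\bbE\,\mu_\pm^{(z)}$ at the root of the self-adjoint operators $\hat\cT_\pm(z)$, it suffices to produce, in the tree, a finite subtree on which the adjacency operator has an eigenvalue (or a localized approximate eigenvector) of modulus larger than $R$, occurring with positive probability. The natural mechanism is a single edge of the tree carrying a huge weight: a PD$(\al)$-type edge from the root whose weight $\zeta_1$ is close to $1$ will \emph{not} give a large eigenvalue since those weights are bounded by $1$. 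Instead one should look one generation down, at a vertex $\bk$ with $S_\bk$ very small (an event of positive probability for the one-sided stable law, whose density near $0$ we control), together with the parent edge weight $\xi^{(\bv_\bk)}_{i_\bk}$ of order one: then by \eqref{eq:zetasom} the outgoing weights $\hat\zeta^{(\bk)}_j = \xi^{(\bk)}_j/(\xi^{(\bv_\bk)}_{i_\bk}+S_{\bk})$ can be made arbitrarily large, because the numerator $\xi^{(\bk)}_1$ (the top point of a PPP$(\al)$) can itself be large with positive probability and there is no row-sum normalization acting at $\bk$ from below. Thus a star centered at such a vertex $\bk$ with one very heavy pendant edge yields, on a $3$-dimensional invariant subspace, an adjacency submatrix whose operator norm exceeds $R$.

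The concrete steps I would carry out: (1) Fix $R>0$. By independence of the processes $\xi^{(\bk)}$, $S_\bk$ and by assumption (H1) — which gives, via \eqref{eq:pppa} and \eqref{eq:onesided}, that $\xi^{(\bk)}_1$ is unbounded and that $S_\bk$ takes arbitrarily small values with positive probability — choose a deterministic constant $w = w(R)$ and an event $E_R$ (depending only on finitely many weights near a fixed vertex $\bk$ in, say, the second generation) on which a specific edge weight of $\hat\cT_\pm(z)$ exceeds $w$, with $\bbP(E_R)=:p_R>0$. (2) On $E_R$, restrict the self-adjoint operator to the span of the two endpoints of that heavy edge and use the elementary bound: if $T$ is self-adjoint and $\langle \ind_a, T\ind_b\rangle = t$ with $|t|$ large, then $T$ has a spectral value of modulus at least $|t|$ away from a neighborhood of $0$; quantitatively, $\langle \de_a,(T-\eta)^{-1}\de_a\rangle$ together with the resolvent identity forces spectral mass of the spectral measure at $a$ outside $[-|t|/2,|t|/2]$ of size bounded below by a universal constant (this is the standard "a large off-diagonal entry pushes the local spectral measure out" argument; one can also invoke interlacing with the $2\times2$ principal submatrix $\left(\begin{smallmatrix}*&t\\ t&*\end{smallmatrix}\right)$). (3) Since the root's spectral measure and the heavy-edge endpoint's spectral measure are related by finitely many steps of the Schur complement / resolvent recursion on the tree, transfer a fixed fraction of that escaping mass back to the root: on $E_R$, $\mu_\pm^{(z)}([-R,R]^c) \geq \delta_0$ for some universal $\delta_0>0$ independent of $R$. (4) Take expectations: $\bar\mu^{(z)}([-R,R]^c) \geq \tfrac12\,p_R\,\delta_0 > 0$. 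Since $R$ was arbitrary, $\bar\mu^{(z)}$ has unbounded support, hence so does $\nu_{\al,z}$.

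An alternative, possibly cleaner route avoiding the resolvent bookkeeping in step (3): use the moment lower bounds directly. From \eqref{eq:mns}–\eqref{eq:mns0}, $m_{2k} = \bbE\langle\de_\eset,\hat\cT_\pm(z)^{2k}\de_\eset\rangle \geq \bbE[w(\g)]$ for any single closed path $\g$ of length $2k$ from the root, since all path weights are non-negative (the $z$-edges contribute $|z|^2\geq 0$ and the other contributions are products of squares). Choosing $\g$ to be the path that goes down to the heavy edge, traverses it back and forth many times, and returns, one gets $m_{2k} \geq \bbE[\,\xi\text{'s and }S\text{'s giving the descent}\,]\cdot \bbE[(\hat\zeta^{(\bk)}_1)^{2(k-c)}]$ for a fixed $c$, and the latter factor is $\bbE[(\xi^{(\bk)}_1/(\xi^{(\bv_\bk)}_{i_\bk}+S_\bk))^{2(k-c)}]$, which is $+\infty$ for $k$ large enough (indeed $\xi_1^{-1}$ is in the domain of attraction, but more to the point $S_\bk^{-1}$ has all moments finite while $\xi_1$ has heavy tails — one must be slightly careful, but on the positive-probability event $\{S_\bk \leq 1,\ \xi^{(\bv_\bk)}_{i_\bk}\leq 1\}$ the ratio stochastically dominates $\xi_1^{(\bk)}/2$, whose $2(k-c)$-th moment diverges for $k$ large). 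Therefore $m_{2k}=\infty$ for some finite $k$; but a measure with bounded support has all moments finite, so $\nu_{\al,z}$ must have unbounded support.

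\textbf{Main obstacle.} The delicate point in either route is ensuring the heavy weight genuinely survives in the operator and is not a low-probability artifact killed by normalization: one must use that the two-stage PWIT alternates offspring laws so that at odd generations there is no $\sum_j = 1$ constraint, and quantify — via the density near $0$ of the one-sided $\al$-stable law $S$ together with the heavy upper tail of $\xi_1$ — that $\hat\zeta^{(\bk)}_1$ is genuinely heavy-tailed (in fact has no finite moment of sufficiently high order). Making the "transfer mass back to the root" step (3) rigorous with a universal lower bound $\delta_0$ is the other technical nuisance; the moment argument sidesteps it, so I expect the final write-up to follow the moment version, essentially showing $\int x^{2k}\,\bar\mu^{(z)}(\dd x)=+\infty$ for $k$ large, contradicting boundedness of support.
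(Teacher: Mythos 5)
There is a genuine gap: the mechanism your whole argument rests on --- a single edge of the limiting tree carrying an arbitrarily large weight --- does not exist. By \eqref{eq:zetasom}, every weight of $\hat\cT_\pm$ is of the form $\hat\zeta^{(\bk)}_j=\xi^{(\bk)}_j/(\xi^{(\bv_\bk)}_{i_\bk}+S_{\bk})$ or $\hat\om^{(\bk)}_j=\xi^{(\bk)}_j/(\xi^{(\bk)}_j+S_{\bk j})$, and in both cases the denominator dominates the numerator: $S_\bk=\sum_i\xi^{(\bk)}_i\geq\xi^{(\bk)}_j$, so $\hat\zeta^{(\bk)}_j\leq 1$, and trivially $\hat\om^{(\bk)}_j\leq 1$. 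Your claim that on $\{S_\bk\leq 1,\ \xi^{(\bv_\bk)}_{i_\bk}\leq 1\}$ the ratio dominates $\xi^{(\bk)}_1/2$ overlooks that $S_\bk$ \emph{contains} $\xi^{(\bk)}_1$, so conditioning on $S_\bk\leq 1$ forces $\xi^{(\bk)}_1\leq 1$ as well; numerator and denominator are not independent. Consequently both of your routes collapse: there is no heavy edge for step (2) of the first route, and in the second route $\bbE[(\hat\zeta^{(\bk)}_1)^{2(k-c)}]\leq 1$, so you cannot conclude $m_{2k}=\infty$. Indeed the conclusion ``$m_{2k}=+\infty$ for some finite $k$'' is flatly incompatible with Lemma \ref{le:expmom} (equivalently \eqref{eq:numz2}), which shows that $\nu_{\al,z}$ has \emph{finite exponential moments}; unbounded support here cannot be detected through an infinite moment.

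The correct mechanism, and the one the paper uses, is not one large weight but an $\ell^2$-accumulation of order-one weights at a vertex of odd type, where no row-sum constraint applies. Restricting the trace expansion $m_{2k}=\tfrac12\sum_{\pm}\dE\scalar{\de_\eset}{\hat\cT_\pm(z)^{2k}\de_\eset}$ to paths of $\hat\cT_-$ that stay within distance one of the root (all path weights being non-negative) gives $m_{2k}\geq\tfrac12\,\dE[\Psi^k]$ with $\Psi=\sum_j\om_j^2$ and $\om_j=\xi_j/(\xi_j+q)$. The variable $\Psi$ is unbounded because PPP($\al$) has at least $n$ points above level $1$ with probability at least $\e^{-1}/n!$, whence $\Psi\geq n(1+q)^{-2}$ on that event; therefore $m_{2k}\geq\tfrac12\,\dP(\Psi>t)\,t^k$ for every $t>0$, so $m_{2k}^{1/k}\to\infty$, which contradicts bounded support (which would force $m_{2k}^{1/k}\leq R^2$) while keeping every individual moment finite. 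If you wish to keep your spectral picture, the object to exhibit is a star at the root of $\hat\cT_-$ with many weights bounded below, whose extreme eigenvalues are $\pm\sqrt{\Psi}$ --- not a single heavy edge.
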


\begin{proof}
  It suffices to show $m_{2k}^{1/k}\to\infty$, $k\to\infty$, where $m_n$ is
  defined in \eqref{eq:mns}. Notice that
  \[
  m_{2k}\geq \frac12 \sum_{\g\in\G_{k}}\dE[w_-(\g)]\geq \frac12\dE[\Psi^k],
  \]
  where the bound is obtained by restricting to paths that are always at
  distance at most $1$ from the root and we write $\Psi=\sum_{j=1}^\infty
  \om_{j}^2$ as before.
  Now, $\Psi$ is an unbounded random variable, and $\dP(\Psi>t)>0$ for all
  $t>0$. Indeed, a simple estimate on $\dP(\Psi>t)$ can be obtained by
  observing: for any $n\geq 1$, $\Psi\geq n\om_n^2$, and $\om_n^2\geq
  (1+q)^{-2}$ if $\xi_n=q\om_n/(1-\om_n)\geq 1$; from \eqref{eq:pppa} one has that the event $\xi_n\geq 1$ has
  probability at least $\e^{-1}/n!$ so that for all $n\in\dN$,
  \[
  \dP(\Psi\geq n(1-q)^{-2})\geq \frac1{e\,n!}.
  \] Therefore
  $m_{2k}\geq \frac12\,\dP(\Psi>t)\,t^k$, and
  $\liminf_{k\to\infty}m_{2k}^{1/k}\geq t$. Letting $t\to\infty$ proves the
  claim.
\end{proof}

\subsection{Recursive distributional equations}\label{se:rde}
Let us first recall some important formulas for resolvents on a tree. 
Let $\cA$ denote a self-adjoint operator defined by a weighted tree via adjacency; see Section \ref{se:adj}.   In particular, we write $\cA(\bu,\bv)=\overline{\cA(\bv,\bu)}$ for the complex valued weight of the directed edge $(\bu,\bv)$ if $\bv=\bu j$ or $\bu=\bv j$ for some $j\in\bbN$ and $\cA(\bu,\bv)=0$ otherwise. For $\eta\in\dC_+$, we let $$R^\eta = \scalar{\de_\eset}{(\cA-\eta)^{-1}\de_\eset}$$ denote the resolvent's diagonal entry at the root. 
Let also $u_j = \cA(\eset,j)$ denote the weight from the root to the $j$-th child. If we remove the $j$-th edge  from
the root, i.e.\ the edge with weight $u_j$, we are left with two disconnected weighted trees, one rooted at $\eset$ and one rooted at $j$. Let $\bar \cA_j$ and $\cA_j$ denote the associated self-adjoint operators respectively.
For any $\eta\in\dC_+$, we let $\bar R_j^\eta$ and $R_j^\eta$ denote the corresponding resolvent's diagonal entries at the respective roots. We refer to \cite[Th.~4.1]{BCChr} for a proof of the following lemma.
\begin{lemma}[Resolvent recursion]\label{le:recres}
For any $\eta\in\dC_+$ the resolvents entries $R^\eta,\bar R_j^\eta$ and $R_j^\eta$
satisfy the following relations. For any $j\in\bbN$ 
\begin{gather}\label{recres1}
R^\eta = \frac{\bar R_j^\eta}{1-|u_j|^2\bar R_j^\eta R_j^\eta}.
\end{gather}
Moreover,
 \begin{gather}\label{recres2}
R^\eta = -\left(\eta + \sum_{j=1}^\infty |u_j|^2 R_j^\eta
\right)^{-1}.
\end{gather}
\end{lemma}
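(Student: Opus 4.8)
The plan is to derive both recursions from the Schur-complement (block-inversion) formula for the resolvent of $\cA-\eta$, exploiting the elementary fact that a tree is disconnected by deleting a single vertex or a single edge. It is worth recording at the outset that, since $\cA$ and every operator obtained from it below by such a deletion is self-adjoint and $\eta\in\dC_+$, the operator $\cA-\eta$ is boundedly invertible with $\|(\cA-\eta)^{-1}\|\le 1/\Im(\eta)$, and every diagonal resolvent entry lies in $\dC_+$: writing $w=(\cA-\eta)^{-1}\de_\eset$ one has $(\cA-\eta)w=\de_\eset$, hence $R^\eta=\scalar{(\cA-\eta)w}{w}$ and $\Im R^\eta=\Im(\eta)\,\|w\|^2>0$, with the same computation for $\bar R_j^\eta$ and $R_j^\eta$. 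This positivity will make all the denominators appearing below nonzero. We also note that $\cA\de_\eset=\sum_j u_j\de_j\in\ell^2(V)$, since $\cD\subset\mathrm{dom}(\cA)$ by assumption, so in particular $\sum_j|u_j|^2<\infty$.

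For \eqref{recres2}, split $\ell^2(V)=\dC\de_\eset\oplus\ell^2(V\setminus\{\eset\})$. The adjacency operator has vanishing diagonal, so in this decomposition $\cA-\eta$ has corner entry $-\eta$, off-diagonal block the vector on $V\setminus\{\eset\}$ supported on the children of the root with $j$-th entry $\overline{u_j}$, and lower-right block $D-\eta$, where $D$ is the adjacency operator of the forest $V\setminus\{\eset\}$. Removing the root splits that forest into the subtrees hanging from the children, so $D=\bigoplus_{j\ge 1}\cA_j$, with $\cA_j$ exactly the operator of the statement; hence $(D-\eta)^{-1}$ is block-diagonal and $\scalar{\de_j}{(D-\eta)^{-1}\de_k}=\de_{j,k}\,R_j^\eta$, distinct children lying in distinct summands. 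The Schur-complement formula then gives $R^\eta=-(\eta+\sum_{j\ge 1}|u_j|^2 R_j^\eta)^{-1}$, which is \eqref{recres2}; self-adjointness of $D$ is inherited from Lemma~\ref{le:esssa}, and the series converges absolutely since $|R_j^\eta|\le 1/\Im(\eta)$ and $\sum_j|u_j|^2<\infty$.

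For \eqref{recres1}, write $\cA=\cA_0+W$, where $\cA_0:=\bar\cA_j\oplus\cA_j$ is the operator of the tree with the $j$-th root edge deleted and $W:=u_j\,|\de_\eset\rangle\langle\de_j|+\overline{u_j}\,|\de_j\rangle\langle\de_\eset|$ reinstates that edge, so $W=PWP$ for $P$ the orthogonal projection onto $\vect\{\de_\eset,\de_j\}$. On the range of $P$, in the basis $(\de_\eset,\de_j)$, the decoupled resolvent is $g_0:=P(\cA_0-\eta)^{-1}P=\mathrm{diag}(\bar R_j^\eta,R_j^\eta)$, the off-diagonal entry vanishing because $\de_\eset$ and $\de_j$ sit in the two different summands of $\cA_0$. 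Sandwiching the resolvent identity $(\cA-\eta)^{-1}=(\cA_0-\eta)^{-1}-(\cA-\eta)^{-1}W(\cA_0-\eta)^{-1}$ by $P$, and using $W=PWP$, collapses it to the $2\times2$ relation $g=g_0-g\,(PWP)\,g_0$ with $g:=P(\cA-\eta)^{-1}P$, whence $g^{-1}=g_0^{-1}+PWP=\left(\begin{smallmatrix}1/\bar R_j^\eta & u_j\\ \overline{u_j} & 1/R_j^\eta\end{smallmatrix}\right)$. Inverting this matrix and reading off the $(\eset,\eset)$ entry yields $R^\eta=\bar R_j^\eta/(1-|u_j|^2\bar R_j^\eta R_j^\eta)$, which is \eqref{recres1}.

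The substance of the proof is not this algebra but the functional-analytic bookkeeping concealed in ``sandwich and invert'': one must justify the resolvent identity and the Schur-complement reduction for the \emph{unbounded} self-adjoint operators $\cA$, $\cA_0$, $D$ — here essential self-adjointness on the finitely supported vectors, established earlier in the excerpt, is the decisive input — and one must know that $1-|u_j|^2\bar R_j^\eta R_j^\eta\ne 0$ and $\eta+\sum_j|u_j|^2 R_j^\eta\ne 0$, which is precisely the $\dC_+$-positivity recorded above. A way to sidestep the unbounded-operator technicalities, if one prefers, is to prove both identities first for $|\eta|$ large via the Neumann expansion $(\cA-\eta)^{-1}=-\eta^{-1}\sum_{k\ge 0}(\cA/\eta)^k$: then $\scalar{\de_\eset}{\cA^k\de_\eset}$ is a weighted count of length-$k$ closed walks from the root on the tree, and decomposing each such walk according to its successive traversals of the $j$-th root edge gives \eqref{recres1}, while decomposing it into its successive excursions away from the root gives \eqref{recres2}; since both sides are analytic on $\dC_+$, the identities then extend to all $\eta\in\dC_+$ by analytic continuation. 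Either route recovers \cite[Th.~4.1]{BCChr}.
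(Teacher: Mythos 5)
Your main argument is correct, and it is worth noting that the paper itself contains no proof of this lemma at all: it simply defers to \cite[Th.~4.1]{BCChr}. Your Schur-complement derivation is essentially the standard argument behind that reference, and you supply it completely, including the two points that are easy to gloss over: (i) the domain bookkeeping — $\de_\eset\in\cD\subset\mathrm{dom}(\cA)$ gives $\sum_j|u_j|^2<\infty$, so the edge-removal perturbations are bounded and symmetric and the decoupled operators $\bar\cA_j\oplus\cA_j$ and $\bigoplus_j\cA_j$ remain self-adjoint on the same domain, which legitimizes the resolvent identity and the block inversion; and (ii) the nonvanishing of the denominators, which follows from $\Im R^\eta=\Im(\eta)\|(\cA-\eta)^{-1}\de_\eset\|^2>0$ and the analogous positivity for $\bar R_j^\eta,R_j^\eta$. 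The $2\times2$ reduction $g^{-1}=g_0^{-1}+PWP$ and the rank-one Schur complement both check out and reproduce \eqref{recres1} and \eqref{recres2} exactly.

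One caveat on your closing aside: the proposed Neumann-series shortcut does not actually work here. The operators in question are unbounded and their spectral measures at the root have unbounded support (cf.\ Lemma \ref{le:usup}), so $\limsup_k m_{2k}^{1/2k}=\infty$ and the expansion $(\cA-\eta)^{-1}=-\eta^{-1}\sum_k(\cA/\eta)^k$ converges for no $\eta$; there is no large-$|\eta|$ regime from which to continue analytically. The walk-decomposition combinatorics is sound for bounded adjacency operators, but in this setting the functional-analytic route you took first is not optional — it is the proof.
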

We are interested in the law of the random variable
\begin{equation}\label{eq:resol30}
  h^{(\pm)}(\eta,z):=\scalar{\de_\eset}{(\hat\cT_\pm(z)-\eta )^{-1}\de_\eset}
\end{equation}
It is convenient to introduce also the modified tree $\bar\cT_\pm(z)$ defined
as the connected component at the root obtained from the tree $
\hat\cT_\pm(z)$ after the removal of the edge with weight $-z$ emanating from
the root. Note that this tree still depends on $z$ since all remaining children of the
root have an edge with weight $-z$ emanating from them, and so on; see the
definition of $ \hat\cT_\pm(z)$ after \eqref{eq:bipxz}. Set
\begin{equation}\label{eq:resol31}
  \bar h^{(\pm)}(\eta,z):=\scalar{\de_\eset}{(\bar\cT_\pm(z)-\eta )^{-1}\de_\eset}
\end{equation}
\begin{proposition}[Recursive distributional equations]\label{prop:recres}
For any $\eta\in\dC_+$, $z\in\dC$, the random variables $ h^{(\pm)}:=h^{(\pm)}(\eta,z)$ and $\bar h^{(\pm)}:=\bar h^{(\pm)}(\eta,z)$ satisfy the following distributional equations:
\begin{equation}\label{eq:rde030}
  h^{(-)}\stackrel{d}{=} \frac{\bar h^{(-)}}{1-  |z|^2\bar h^{(-)}\bar h^{(+)}},\qquad h^{(+)}\stackrel{d}{=} \frac{\bar h^{(+)}}{1-  |z|^2\bar h^{(-)}\bar h^{(+)}},
\end{equation}
where $\bar h^{(-)},\bar h^{(+)}$ are taken
independent. 
Moreover, if $\psi(x):= -(\eta+x)^{-1}$, then 
\begin{equation}\label{eq:rde01}
  \bar h^{(+)}\stackrel{d}{=} \psi\left(\sum_{k=1}^\infty \zeta_k^2h_k^{(-)}\right),
\end{equation} 
and $(h^{(-)},\bar h^{(-)})$ satisfy the system of distributional equations 
\begin{gather}\label{eq:rdes}
  \bar h^{(-)} \stackrel{d}{=} \psi\left(\sum_{k=1}^\infty (\tfrac{\xi_k}{\xi_k+S_k})^2
    \,\psi\left( |z|^2 \bar h^{(-)}+ \sum_{j=1}^\infty (\tfrac{\xi^{(k)}_j}{\xi_k+S_k})^2h_j^{(-)}\right)
  \right)\\
  h^{(-)}\stackrel{d}{=} \frac{\bar h^{(-)}}{1-  |z|^2\bar h^{(-)}\psi\left(
      \sum_{k=1}^\infty \zeta_k^2h_k^{(-)}
    \right)},\nonumber
\end{gather}
where $\{h_k^{(-)}\},\bar h^{(-)}$  are independent such that $\{h_k^{(-)}\}$ are i.i.d.\ copies of $h^{(-)}$, and $$\{\xi_k, k\in\bbN\}, \;\{\xi^{(k)}_j, j\in\bbN\}_{k\in\bbN}, \;\{\zeta_k, k\in\bbN\}$$
are independent processes with laws PPP($\al$), PPP($\al$), and
PD($\al$), respectively.
\end{proposition}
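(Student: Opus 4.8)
The plan is to read all four distributional identities off the two resolvent recursions of Lemma~\ref{le:recres}, applied to the self-adjoint operators $\hat\cT_\pm(z)$ and $\bar\cT_\pm(z)$ (self-adjointness of $\hat\cT_\pm(z)$ is part of Theorem~\ref{th:opconvz}, and the same argument applies to $\bar\cT_\pm(z)$), together with a careful reading of the recursive construction of $\hat\cT_\pm(z)$ given after \eqref{eq:bipxz}. Two structural facts are used. First, a vertex whose children carry $\hat\zeta$-type weights — in particular the root of $\hat\cT_+(z)$ and of $\bar\cT_+(z)$, where these are the PD($\al$) weights $\zeta_k$ — has below each child an independent copy of $\hat\cT_-(z)$; dually, the root of $\hat\cT_-(z)$ and of $\bar\cT_-(z)$ carries $\hat\om$-type weights, but there the weight to a child and that child's own weights are coupled (this coupling is the source of the difficulty, discussed in the last paragraph). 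Second, deleting the distinguished directed edge of weight $-z$ at a vertex $v$ (for which $|u|^2=|z|^2$) splits off one independent piece, a copy of $\bar\cT_-(z)$ if $v$ is of the $\hat\zeta$-family and of $\bar\cT_+(z)$ if $v$ is of the $\hat\om$-family, because the recursive construction grafts a fresh i.i.d.\ copy of $\hat\cT_-$, respectively $\hat\cT_+$, at that pending node without its own $-z$ edge. Consequently, gluing independent trees $\bar\cT_+(z)$ and $\bar\cT_-(z)$ at their roots by a $-z$ edge yields a single tree that is $\hat\cT_+(z)$ when rooted on the $\bar\cT_+(z)$ side and $\hat\cT_-(z)$ when rooted on the $\bar\cT_-(z)$ side.

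Granting this, \eqref{eq:rde030} is immediate from \eqref{recres1}: apply it to the glued tree just described, with $j$ the gluing edge, so that the two pendant resolvents are the independent variables $\bar h^{(+)}$ and $\bar h^{(-)}$ and $|u_j|^2=|z|^2$; the two choices of root give the two identities. One should also record that the series implicit in Lemma~\ref{le:recres} converge almost surely, which holds because the squared weights out of any vertex sum to at most $1$ for the PD($\al$) and the $\hat\zeta$-type weights and to $\Psi=\sum_j\om_j^2<\infty$ for the $\hat\om$-type weights (plus one bounded term $|z|^2$ from the $-z$ edge), while every diagonal resolvent is bounded by $(\Im\eta)^{-1}$.

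The remaining identities come from \eqref{recres2}. At the root of $\bar\cT_+(z)$ the edges carry the PD($\al$) weights $\zeta_k$, deleting the $k$-th one leaves the subtree at child $k$, which is of the $\hat\om$-family, carries its own $-z$ edge, and is distributed as $\hat\cT_-(z)$; these subtrees are i.i.d.\ over $k$ and independent of $(\zeta_k)_k$, so substituting $R_k^\eta=h_k^{(-)}$ into \eqref{recres2} gives \eqref{eq:rde01}. The first line of \eqref{eq:rdes} comes from two nested applications of \eqref{recres2}. First, at the root of $\bar\cT_-(z)$ the children $k$ carry the $\hat\om$-type weights $\xi_k/(\xi_k+S_k)$, where $\xi_k$ is the $k$-th point of the root's Poisson process and $S_k=\sum_j\xi^{(k)}_j$ the total mass of the Poisson process at $k$, whence $\bar h^{(-)}\stackrel{d}{=}\psi\bigl(\sum_k(\xi_k/(\xi_k+S_k))^2R_k^\eta\bigr)$. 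Second, each child $k$ is of the $\hat\zeta$-family, carrying the weights $\xi^{(k)}_j/(\xi_k+S_k)$ to its own children and a $-z$ edge whose far side is a fresh copy of $\bar\cT_-(z)$ with resolvent $\bar h^{(-)}$; applying \eqref{recres2} at $k$ gives $R_k^\eta=\psi\bigl(|z|^2\bar h^{(-)}+\sum_j(\xi^{(k)}_j/(\xi_k+S_k))^2h_j^{(-)}\bigr)$, the grandchildren's subtrees being of the $\hat\om$-family with their own $-z$ edges and hence i.i.d.\ copies of $\hat\cT_-(z)$. Finally, the second line of \eqref{eq:rdes} is the $h^{(-)}$ identity of \eqref{eq:rde030} after substituting the value of $\bar h^{(+)}$ from \eqref{eq:rde01}.

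The substantive difficulty — the feature absent from the i.i.d.\ heavy-tailed model of \cite{BCChnh} — is that the weight $\xi_k/(\xi_k+S_k)$ from the root of $\bar\cT_-(z)$ to a child $k$ and the weights $\xi^{(k)}_j/(\xi_k+S_k)$ out of $k$ are not independent: both carry the common normalization $\xi_k+S_k$, in which $S_k=\sum_j\xi^{(k)}_j$ is the total mass of the very Poisson process that produces the grandchildren weights. The incoming weight of $k$ and the entire family of its outgoing weights must therefore be treated as a single random object, built jointly from the root's Poisson process and the one at $k$, and the bulk of the proof — and its main obstacle — is to verify that, once this pairing is respected, the subtrees below the grandchildren and at the far ends of the $-z$ edges are exactly the claimed mutually independent copies of $\hat\cT_-(z)$ and $\bar\cT_-(z)$. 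This amounts to a careful but otherwise routine pass through the parity labels of the recursive construction, and it is there that I would concentrate the detailed work. As usual for tree recursions, the repeated symbols $h^{(-)}$ and $\bar h^{(-)}$ on the right-hand sides of \eqref{eq:rdes} are read as mutually independent copies of the corresponding root variables.
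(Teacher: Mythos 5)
Your proposal is correct and follows essentially the same route as the paper: both derive \eqref{eq:rde030} from the relation \eqref{recres1} applied at the $-z$ edge of the root (using that $\hat\cT_\pm(z)$ is an independent gluing of $\bar\cT_+(z)$ and $\bar\cT_-(z)$), and both obtain \eqref{eq:rde01} and the first line of \eqref{eq:rdes} from one, respectively two nested, applications of \eqref{recres2}, with the second line of \eqref{eq:rdes} obtained by substitution; you also correctly isolate the same delicate point the paper flags, namely that the law of the subtree resolvent at a child $k$ of the root of $\bar\cT_-(z)$ depends on $\xi_k$ through the shared normalization $\xi_k+S_k$, which forces the extra level of recursion before independence is recovered.
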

\begin{proof}
In the setting of Lemma \ref{le:recres} above, with $\cA$ given by $\hat\cT_\pm(z)$,  we may identify $R^\eta$ with $h^{(\pm)}(\eta,z)$ and $\bar R_j^\eta$ with $\bar h^{(\pm)}(\eta,z)$, if $j$ represents  the child associated with the weight $-z$ emanating from the root. Notice also that in this case one has that $R_j^\eta$ is an independent copy of the random variable $\bar h^{(\mp)}(\eta,z)$.
Thus from \eqref{recres1} 
one finds the distributional identities \eqref{eq:rde030}.

Next, observe that from \eqref{recres2} one has 
\begin{equation}\label{eq:rde1}
  \bar h^{(+)}(\eta,z)= - \left(\eta + \sum_{k=1}^\infty \zeta_k^2h_k^{(-)}(\eta,z)\right)^{-1}
\end{equation}
where $\{\zeta_k\}$ has law PD($\al$), while $\{h_k^{(-)}(\eta,z), k\in\bbN\}$ is a set of i.i.d.\ copies of $h^{(-)}(\eta,z)$, independent of $\{\zeta_k\}$. This proves \eqref{eq:rde01}.

Finally, again by \eqref{recres2} one has 
\begin{equation}\label{eq:rde2}
  \bar h^{(-)}(\eta,z)= - \left(\eta  + \sum_{k=1}^\infty (\tfrac{\xi_k}{\xi_k+S_k})^2
    h_k^{(+)}(\xi_k,\eta,z)\right)^{-1}
\end{equation}
where $\{\xi_k\}$ has law PPP($\al$), $S_k=\sum_{j=1}^\infty\xi^{(k)}_{j}$ where
$\{\xi^{(k)}_j, \,j\in\bbN\}_{k\in\bbN} $ are all i.i.d.\ processes with law PPP($\al$), and
$h_k^{(+)}(\xi_k,\eta,z)$ is the resolvent entry associated to the sub-tree rooted
at vertex $k$ obtained from the tree $\bar \cT_-(z)$ by deleting the root
$\eset$ and all its descendants except for $k$. We remark that in
\eqref{eq:rde2}, given $\{\xi_k\}$, the variables $\{h_k^{(+)}(\xi_k,\eta,z),
\,k\in\bbN\}$ are independent but their law depends on $\{\xi_k\}$ in a non
trivial way. We can however apply one more time the tree recursion \eqref{recres2} and obtain,
for each $k\in\bbN$:
\begin{equation}\label{eq:rde3}
  h_k^{(+)}(\xi_k,\eta,z)= - \left(\eta + |z|^2 \bar h_0^{(-)}(\eta,z) + \sum_{j=1}^\infty (\tfrac{\xi^{(k)}_j}{\xi_k+S_k})^2h_j^{(-)}(\eta,z)\right)^{-1},
\end{equation}
where $\bar h_0^{(-)}(\eta,z)$ and $h_j^{(-)}(\eta,z)$, $j\in\bbN$, are
independent copies of $\bar h^{(-)}(\eta,z)$ and $h^{(-)}(\eta,z)$
respectively.
Using \eqref{eq:rde030}--\eqref{eq:rde3}, one obtains the system of distributional
equations \eqref{eq:rdes}.
\end{proof} 

In principle, the system \eqref{eq:rdes} should determine the law of
$h^{(-)}$ and, via \eqref{eq:rde030}-\eqref{eq:rde01}, also the law of $h^{(+)}$.
However, we do not have a proof of the uniqueness of the solutions to these distributional equations. 

We remark that in the special case $z=0$, one has $\bar h^{(-)}(\eta,0)=h^{(-)}(\eta,0)$ and the above
equations reduce to the following recursive distributional equation:
\begin{gather}\label{eq:rde0}
  h^{(-)} \stackrel{d}{=} \psi\left(\sum_{k=1}^\infty
    (\tfrac{\xi_k}{\xi_k+S_k})^2 \,\psi\left( 
      \sum_{j=1}^\infty (\tfrac{\xi^{(k)}_j}{\xi_k+S_k})^2h_k^{(-)}\right)
  \right),
\end{gather}
where $\{h_k^{(-)}\}$  and 
$\{\xi_k\}, \{\xi^{(k)}_j\}$ are as above. Once we have the law of $h^{(-)}(\eta,0)$, then $h^{(+)}(\eta,0)$ is determined by the equation
\eqref{eq:rde1} which, in the case $z=0$, becomes
\begin{gather}\label{eq:rde+}
h^{(+)} \stackrel{d}{=} \psi\left(\sum_{k=1}^\infty \zeta_k^2h_k^{(-)}\right).
\end{gather}


 
\section{Convergence of eigenvalues}\label{se:conv_spec}

Here we prove Theorem \ref{th:girko}. Once the convergence of singular values
in Theorem \ref{th:numz} has been obtained, we may follow the well known
Hermitization strategy, see e.g.\ \cite{MR2409368} and
\cite{bordenave-chafai-changchun}, which allows one to prove $\mu_{M}{\weak}
\mu_\al$ as $n\to\infty$, for some probability measure $\mu_\al$ by showing
the uniform integrability of the function $\dR\ni x\mapsto |\log(x)|$ with
respect to the family of measures $\nu_{M-z}$. More precisely, we shall
establish the following lemma.

\begin{lemma}\label{le:unifint}
  For any $a>0$, $z\in\dC$, there exists $b>0$ such that for all $n\in\dN$:
  \begin{equation}\label{eq:tot}
    \bbP\left(\supp(\nu_{M-z})\not\subset[n^{-b},n^b]\right)\leq n^{-a}.
  \end{equation}
  Moreover, for any $\veps>0$, for a.a. $z\in \dC$
  \begin{equation}\label{eq:uniintinprob}
    \lim_{\de\to 0 }\limsup_{n} \,\bbP\left(\int_{K_{\de}^{c}} |\log(x)|\nu_{M-z}(\dd x)>\veps\right)=0,
  \end{equation}
  where $K_{\de}=[\de,\de^{-1}]$.
\end{lemma}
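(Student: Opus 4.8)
\emph{The support bound \eqref{eq:tot}.} I would prove the two inclusions separately. The inclusion $\supp(\nu_{M-z})\subset[0,n^b]$ is deterministic: since every row of $M$ is a probability vector, $\|M\|_F^2=\sum_{i,j}M_{i,j}^2\le\sum_i\big(\sum_jM_{i,j}\big)^2=n$, so the largest singular value of $M-z$ is at most $\|M-z\|_{\mathrm{op}}\le\|M\|_F+|z|\sqrt n\le(1+|z|)\sqrt n$, which is $\le n^b$ for $b=b(z)$ large and all $n\ge1$. For $\supp(\nu_{M-z})\subset[n^{-b},\infty)$, i.e.\ a polynomial lower bound on the smallest singular value $s_n(M-z)$, I would invoke Theorem \ref{th:sn}: using the bounded density hypothesis (H2) it provides, for every $a>0$, a constant $b=b(a,z)>0$ with $\bbP\big(s_n(M-z)<n^{-b}\big)\le n^{-a}$. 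Taking $b$ large enough to serve both inclusions gives \eqref{eq:tot}.

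\emph{Reduction of \eqref{eq:uniintinprob}.} Fix $z$ in the a.e.\ set where Theorem \ref{th:sn} applies, and for $0<\de\le1$ decompose
\[
\int_{K_\de^c}|\log x|\,\nu_{M-z}(\dd x)=\int_{\de^{-1}}^\infty\log x\,\nu_{M-z}(\dd x)+\int_{0}^{\de}\log(1/x)\,\nu_{M-z}(\dd x)=:I_1+I_2 .
\]
The tail $I_1$ is controlled deterministically: from $\log x\le x$ and $x\le\de x^2$ on $\{x\ge\de^{-1}\}$,
\[
I_1\le\int_{\de^{-1}}^\infty x\,\nu_{M-z}(\dd x)\le\de\int_{0}^{\infty}x^2\,\nu_{M-z}(\dd x)=\frac{\de}{n}\,\|M-z\|_F^2\le(1+|z|)^2\,\de ,
\]
which goes to $0$ as $\de\to0$. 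For $I_2$ I would use the identity (Tonelli, legitimate since $\de\le1$)
\[
I_2=\log(1/\de)\,\nu_{M-z}([0,\de])+\int_{0}^{\de}\frac{\nu_{M-z}([0,u])}{u}\,\dd u ,
\]
and observe that on the event $\cG_n:=\{s_n(M-z)\ge n^{-b}\}$, which satisfies $\bbP(\cG_n^c)\to0$ by Theorem \ref{th:sn}, the measure $\nu_{M-z}$ charges no set below $n^{-b}$ (in particular it has no atom at $0$, so $I_2<\infty$), and the last integral runs effectively over $u\in[n^{-b},\de]$.

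\emph{The bulk estimate (the main step).} It then suffices to establish the ``intermediate singular values'' bound: there are $c\in(0,1)$ and $C<\infty$, depending only on $\al$ and $z$, such that with probability tending to $1$,
\[
\nu_{M-z}([0,u])\le C\,u^{\,c}\qquad\text{for all }u\in[n^{-b},1]
\]
(a logarithmic correction in $u$ would be harmless). Granting this, for every fixed $\de\in(0,1]$ and all $n$ with $n^{-b}\le\de$, on $\cG_n$ intersected with the bulk event one has $I_1+I_2\le g(\de):=(1+|z|)^2\de+C\de^{\,c}\log(1/\de)+\tfrac{C}{c}\de^{\,c}$; since $g(\de)\to0$ as $\de\to0$, for $\de$ small we get $\limsup_n\bbP\big(\int_{K_\de^c}|\log x|\,\nu_{M-z}(\dd x)>\veps\big)\le\limsup_n\big(\bbP(\cG_n^c)+\bbP(\text{bulk estimate fails})\big)=0$, and letting $\de\to0$ proves \eqref{eq:uniintinprob}. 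To prove the bulk estimate I would follow the Tao--Vu scheme \cite{MR2409368}, in the form developed for these models in \cite{BCChnh,BCCm}: first bound the counting function $\#\{i:s_i(M-z)\le u\}$ by a sum of $u$-dependent functionals of the distances $\dist(R_j,H_j)$ from the rows $R_j$ of $M-z$ to the subspaces spanned by suitable subsets of the remaining rows, via the rank/interlacing inequalities; then, using (H2) and the same small-ball/anti-concentration estimates that enter Theorem \ref{th:sn}, show that each such distance is at least a fixed negative power of $n$ with overwhelming probability, and sum up. The weak convergence $\nu_{M-z}\weak\nu_{\al,z}$ of Theorem \ref{th:numz} gives an alternative handle on the boundary term $\nu_{M-z}([0,\de])$ for fixed $\de$, but it is superseded once the bulk estimate is in hand.

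\emph{Main obstacle.} The hard part is the bulk estimate, i.e.\ controlling \emph{all} small singular values of $M-z$, not just $s_n(M-z)$. Unlike the i.i.d.\ case, the rows of $M$ are not independent of its columns — the coupling enters through the normalizing sums $\rho_i$ — and the weights are heavy tailed, so obtaining uniform lower bounds on the distances $\dist(R_j,H_j)$ requires carefully decoupling the random normalizations from the anti-concentration of the individual entries $X_{i,j}$. This is precisely the ``nontrivial extension'' of the arguments of \cite{BCChnh,BCCm} alluded to in the introduction, and is the step I would expect to absorb most of the work.
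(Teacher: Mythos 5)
Your proposal follows essentially the same route as the paper: the deterministic Frobenius-norm bound plus Theorem \ref{th:sn} for \eqref{eq:tot}, the second-moment bound for the tail of the logarithm, and the Tao--Vu negative second moment identity combined with a distance-to-subspace estimate (requiring (H2) and the decoupling of the row normalizations $\rho_i$) for the small singular values. The only cosmetic difference is that you phrase the bulk estimate as a high-probability bound $\nu_{M-z}([0,u])\leq Cu^{c}$ on the counting function, whereas the paper states it as $\dE[s_{n-i,z}^{-2}\ind_F]\leq C(n/i)^{1+2/\al}$ and then integrates $x^{-p}$ against $\nu_{M-z}$; these are equivalent formulations of the same estimate.
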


From \eqref{eq:uniintinprob} and Theorem \ref{th:numz}, as shown in
\cite[Lem.~4.3]{bordenave-chafai-changchun}, one has that $\mu_{M}{\weak}
\mu_\al$ in probability, where $\mu_\al$ is the probability measure on $\dC$
that satisfies
\begin{equation}\label{eq:umua}
  U_{\mu_\al}(z) 
  =-\int_0^\infty \log(x)\nu_{\al,z}(\dd x),
\end{equation}
for almost all $z\in\dC$. Here, for any probability measure $\mu$ on $\dC$, the function  
\[
U_{\mu}(z)=-\int_\dC\log|w-z|\mu(dw)\,,
\]
denotes the logarithmic potential. 

To improve the above convergence to the desired statement that $\mu_{M}{\weak}
\mu_\al$ a.s.\ (almost surely), we argue as follows. As in
\cite[Lem.~4.3]{bordenave-chafai-changchun}, it suffices to show that
$U_{\mu_M}(z)$ converges almost surely. In particular, since the above facts
show that $U_{\mu_M}(z)\to U_{\mu_\al}(z)$ in probability, it is now
sufficient to prove that for a.a.\ $z\in\dC$ there exists a deterministic
sequence $L_{n}$ such that, almost surely,
\[
\lim_{n\to \infty}(U_{\mu_M}(z)-L_{n})=0.
\]
By \eqref{eq:tot}, and the Borel-Cantelli lemma, there exists $b>0$ such that,
almost surely 
\[
\supp(\nu_{M-z})\subset[n^{-b},n^{b}],
\]
if $n$ is large enough. If $f_{n}(x):=\ind_{x\in[n^{-b},n^{b}]}\log(x)$, then a.s.\ 
for every $n$ large enough one has the identity 
\[
U_{\mu_M}(z)=-\frac1n\sum_{i=1}^n\log|\la_i-z| = -\int_0^\infty
\log(s)\nu_{M-z}(\dd s)= -\int_0^\infty f_n(s)\nu_{M-z}(\dd s).
\]
Since the total variation norm of $f_n$ is $O(\log n)$ the concentration
inequality of \cite[Lem.~C.2]{BCChnh} shows that
\[
\bbP\left(|U_{\mu_M}(z) -\bbE U_{\mu_M}(z)|>\veps\right)
\leq2\,\e^{-c\,\veps \,n\,(\log n)^{-2}},
\]
for some constant $c>0$. Therefore, the conclusion follows from the
Borel-Cantelli lemma letting $L_{n}=\bbE[ U_{\mu_M}(z)]$.

The above argument completes the proof of the almost sure converge $\mu_M\weak
\mu_\al$ as $n\to\infty$, where $\mu_\al$ is a probability measure on
the unit disc $\dD$ of radius $1$, that depends only on $\al\in(0,1)$.
The fact that $\mu_\al$ is supported on $\dD$ follows from the fact that for each $n$ the measure $\mu_M$ is supported on $\dD$ by the Perron-Frobenius theorem; 
see e.g.\ \cite[Chap.\ 8]{MR1084815}. 
From the
identity \eqref{eq:umua}, and the fact that $\nu_{\al,z}$ depends radially on
$z\in\dC$ one infers that $\mu_\al$ is a radially symmetric measure. Moreover,
the finiteness of $U_{\mu_\al}(0)$ implies that $\mu_\al$ is not a Dirac mass
at the origin. Similarly, one can observe that $\mu_\al$ is not concentrated
on the unit circle $|z|=1$. To this end, we will show that 
\begin{equation}\label{sumwi}
\int_\dC |z|\,\mu_\al(dz)\leq \int_0^\infty x\, \nu_{\al,0}(\dd x).
\end{equation}
From \eqref{sumwi}, the  desired conclusion follows from
$$
\int_0^\infty x\, \nu_{\al,0}(\dd x)\leq \left(\int_0^\infty x^2 \nu_{\al,0}(\dd x)\right)^\frac12=\sqrt {m_2}<1,
$$
where the strict bound $m_2<1$ is obtained by direct computations as in Lemma \ref{le:expmom}.

To prove \eqref{sumwi} we first observe that, by an inequality of Weyl, see e.g.\
\cite[Lem.~B.5]{BCChnh}, the eigenvalues of $M$ satisfy
\begin{equation}\label{sumw}
\int_\dC |\la|\,\mu_M(\dd\la) \leq \int_0^\infty x\,\nu_M(\dd x).
\end{equation}
Thus, it remains to show that a.s.\ \begin{equation}\label{sumw2}\int_0^\infty x\,\nu_M(\dd x)\to\int_0^\infty x\,\nu_{\al,0}(\dd x),\end{equation} as $n\to\infty$. Indeed, for each $K>0$, by weak convergence, this holds if we replace $x$ by the truncation $x\wedge K$. Moreover, from Schwarz' and Markov's inequalities one has the uniform deterministic bound  
\begin{align*}
\int_K^\infty x\,\nu_M(\dd x)&\leq\left(\int_0^\infty x^2\,\nu_M(\dd x)\right)^{\frac12} \left(\int_K^\infty \nu_M(\dd x)\right)^{\frac12}\\& \leq \frac1{K}\int_0^\infty x^2\,\nu_M(\dd x)\leq \frac1K\,,
\end{align*}
where the last estimate follows from  $$\int_0^\infty x^2\,\nu_M(\dd x)=\frac1n\sum_{i,j=1}^n
M_{i,j}^2 \leq \frac1n\sum_{i,j=1}^nM_{i,j} = 1.$$
Therefore, \eqref{sumw2} follows by letting $K\to\infty$.

 This ends the proof of Theorem \ref{th:girko} assuming the validity of Lemma
\ref{le:unifint}. The rest of this section is concerned with the proof of
Lemma \ref{le:unifint}.

\subsection{Extreme singular values}\label{se:smallest}

Here we prove the first part of Lemma \ref{le:unifint}. Let $s_{1,z}\geq
\cdots \geq s_{n,z}$ denote the singular values of $M-z$. We start with a simple
upper bound on $s_{1,z}$.
Notice that 
\begin{align}\label{eq:simples}
  \int_0^\infty x^2 \nu_{M-z}(\dd x)
  &= \frac1n\sum_{i,j=1}^n |M_{i,j}-z\de_{i,j}|^2\nonumber \\
  & \leq \frac2n\sum_{i,j=1}^n M_{i,j}^2 + 2 |z|^2 \leq 2
  (1+|z|^2),
\end{align} 
where we use $\sum_{j}M_{i,j}^2\leq \sum _jM_{i,j} = 1$. This implies the
deterministic bound 
\[
s_{1,z}^2\leq 2n (1+|z|^2).
\] 
Thus \eqref{eq:tot} follows from the lower bound on $s_{n,z}$ given in Theorem
\ref{th:sn} below. Notice that $M-z$ is not invertible at $z=1$, i.e.\
$s_{n,1}=0$. Because of the different scaling, in \cite[Th.~1.4]{BCCm} it was
sufficient to prove a lower bound on $s_{n,z}$ for all $z=O(n^{-1/2})$. In our
setting instead, we need to establish a similar bound for all $z\neq 1$.
\begin{theorem}\label{th:sn}
  Assume $(H2)$. Let $s_{n,z}$ denote the smallest singular value of $M-z$.
  For any $\de\in(0,1)$, and $a>0$, there exists $b=b(a,\de)>0$ such that if
  $z\in\dC$ satisfies $|z-1|\geq \de$, and $|z|\leq \de^{-1}$, then for $n\gg 1$
  one has
  \begin{equation}\label{eq:sn1}
    \dP(s_{n,z}\leq n^{-b})\leq n^{-a}.
  \end{equation}
\end{theorem}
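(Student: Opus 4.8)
To prove the lower bound on $s_{n,z}$ I would follow the well-established scheme for smallest singular values of random matrices with dependent columns, combining the geometric distance-to-subspace estimate of Rudelson and Vershynin (as used in \cite{BCCm,MR2409368}) with the special structure coming from the Markov normalization. The first step is the standard reduction: writing $R_1,\dots,R_n$ for the rows of $M-z$, one has
\[
s_{n,z} \geq \frac{1}{\sqrt n}\,\min_{i\in[n]} \dist(R_i, H_i),
\]
where $H_i$ is the span of $\{R_j: j\neq i\}$, so it suffices to show that for each fixed $i$, $\dist(R_i,H_i)\leq n^{-b'}$ has probability at most $n^{-a-1}$, and then union bound over $i$. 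By exchangeability we may take $i=1$. Let $w$ be a unit normal vector to $H_1$; then $\dist(R_1,H_1)=|\scalar{R_1}{w}|$. The key point is that $w$ depends only on rows $R_2,\dots,R_n$, hence only on the entries $\{X_{j,\ell}: j\geq 2\}$, while $R_1$ involves $X_{1,1},\dots,X_{1,n}$ and the normalization $\rho_1=\sum_\ell X_{1,\ell}$.

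\emph{Handling the normalization.} The subtlety absent from the i.i.d.\ case is that $R_1 = \rho_1^{-1}(X_{1,1},\dots,X_{1,n}) - z e_1$ has a common prefactor $\rho_1^{-1}$ and the coordinates $X_{1,\ell}/\rho_1$ are not independent. I would first condition on $\rho_1$ (equivalently on the value of $\sum_\ell X_{1,\ell}$); given $\rho_1=r$, the vector $(X_{1,1},\dots,X_{1,n})$ is distributed as i.i.d.\ copies of $\bx$ conditioned on their sum. A cleaner route, following \cite[Th.~1.4]{BCCm}, is to introduce the independent vector $Y=(X_{1,1},\dots,X_{1,n})$ with unconstrained i.i.d.\ entries and to write the scalar product as a small perturbation: one shows $|\scalar{R_1}{w}| \geq \rho_1^{-1}|\scalar{Y}{w}| - |z|\,|w_1|$, and then needs an anti-concentration (small-ball) bound for $\scalar{Y}{w} = \sum_\ell X_{1,\ell} w_\ell$. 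Since the $X_{1,\ell}$ are independent and, by (H2), have bounded density, each summand $X_{1,\ell}w_\ell$ has a density bounded by $\|w\|_\infty^{-1}\cdot C$ wherever $w_\ell\neq 0$; using that $w$ is a \emph{unit} vector at least one coordinate has $|w_\ell|\geq n^{-1/2}$, which already gives a crude bound $\sup_t\dP(|\scalar{Y}{w}-t|\leq \epsilon) \leq C\sqrt n\,\epsilon$. To promote this to a polynomial-in-$n$ small-ball bound one splits into two cases depending on whether $w$ is ``spread'' (many coordinates of size $\gtrsim n^{-1/2}$) or ``compressible'' (essentially supported on few coordinates); the first case is handled by a Berry--Esseen / Lévy-concentration argument over the heavy-tailed but density-bounded entries, and the second case is handled by the $\epsilon$-net argument over compressible vectors exactly as in \cite{BCCm,MR2409368}.

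\emph{Where $z\neq 1$ enters.} The condition $|z-1|\geq\de$ is what prevents $M-z$ from being genuinely singular (at $z=1$ the vector $\mathbf 1$ is a right null vector of $M-z$). Concretely, it is used to rule out the ``bad'' compressible direction: one must show that the unit normal $w$ to $H_1$ cannot be too close to $\mathbf 1$, and more generally that $(M^\top-\bar z)$ restricted to compressible vectors is invertible with a quantitative (inverse-polynomial) bound, uniformly over $|z-1|\geq\de$, $|z|\leq\de^{-1}$. This is the analogue of the ``invertibility on compressible vectors'' step, and here one exploits that $M^\top \mathbf 1$ is a positive vector bounded away from $\mathbf 1$ on the event that the column sums of $M$ concentrate (which holds with overwhelming probability), together with the Poisson--Dirichlet description of rows from Lemma~\ref{le:rows} to control the operator norm of $M$ on low-dimensional subspaces.

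\emph{Main obstacle.} I expect the hard part to be the anti-concentration estimate for $\scalar{Y}{w}$ that is \emph{uniform} over the relevant (spread) unit vectors $w$, producing a genuinely polynomial rather than merely exponential-in-a-net bound, while simultaneously controlling the random prefactor $\rho_1$ (which, being in the domain of attraction of a one-sided $\al$-stable law with $\al<1$, has heavy tails and can be atypically large). One needs a high-probability two-sided bound $n^{-C}\leq \rho_1 \leq n^{C}$ — the lower bound is easy since $\rho_1\geq \max_\ell X_{1,\ell}$ and the density of $\bx$ is bounded, the upper bound follows from $a_n^{-1}\rho_1\Rightarrow S$ plus a tail estimate — and then to interleave this with the net argument for compressible $w$ and the Lévy-concentration argument for spread $w$. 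This is exactly the point where, as the introduction warns, ``the present setting requires a nontrivial extension of the known arguments'' combining \cite{BCChnh} and \cite{BCCm}.
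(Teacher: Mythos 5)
You have the right ingredients in the room (bounded density anti-concentration, a polynomial bound on $\rho_1$, the special role of the direction $\mathbf{1}$ at $z=1$), but the step you propose for handling the shift $-z$ is a dead end, and it is precisely the step where the paper's argument lives. Your inequality $|\scalar{R_1}{w}| \geq \rho_1^{-1}|\scalar{Y}{w}| - |z|\,|w_1|$ is useless when the normal vector $w$ has $|w_1|$ of order one (which you cannot exclude): the subtracted term then swamps everything and no anti-concentration for $\scalar{Y}{w}$ can rescue it. The correct move is not to treat $-ze_1$ as a perturbation but to absorb it into a \emph{deterministic linear map of the i.i.d.\ row}: since $\rho_1 R_1 = X_1 - z(\sum_\ell X_{1,\ell})e_1$, one has $\rho_1\scalar{R_1}{w} = \scalar{X_1}{v}$ with $v = (I - \bar z\,e_1\mathbf{1}^{\top})w$. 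Equivalently, the paper factorizes $M-z = DY$ with $D=\mathrm{diag}(\rho_i^{-1})$ and $Y = X - zD^{-1}$, so that $s_{n,z}\geq s_n(Y)(\max_i\rho_i)^{-1}$, each row of $Y$ is an i.i.d.\ bounded-density row of $X$ hit by the explicit matrix $A_z = I - z\mathbf{1}e_1^{\top}$ (up to permutation), and the quoted small-ball bound of \cite[Th.~1.4]{BCCm} gives $\dP(s_n(Y)\leq u)\leq 3BK_nu$ with $K_n = s_n(A_z)^{-1}$. The hypothesis $|1-z|\geq\de$, $|z|\leq\de^{-1}$ enters only through the \emph{explicit deterministic computation} $K_n^2 \leq 1+\de^{-2}+\de^{-4}n$; the polynomial bound on $\max_i\rho_i$ comes from the tail (H1). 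Nothing else is needed.

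Two further points. First, the compressible/incompressible decomposition, $\epsilon$-nets and L\'evy concentration functions you invoke are unnecessary here: since the rows of $M$ are independent, $w$ is independent of row $1$, and your own ``crude'' bound $\sup_t\dP(|\scalar{Y}{w}-t|\leq\epsilon)\leq C\sqrt n\,\epsilon$ (conditioning on all but the largest coordinate of the relevant vector, using (H2)) already loses only a polynomial factor, which is harmless because the theorem only asks for $s_{n,z}\geq n^{-b}$ with $b$ allowed to be large. You misidentify the main obstacle: no uniformity over $w$ is required. Second, your proposed mechanism for using $z\neq 1$ — that ``the column sums of $M$ concentrate with overwhelming probability'' — is false in this regime: for $\al\in(0,1)$ the column sums of $M$ converge to non-degenerate random variables (sums of the $\hat\om_i$ of Lemma~\ref{le:cols}), so an argument resting on their concentration would fail. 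As written, the proposal does not constitute a proof; the missing idea is the reduction to the deterministic matrix $A_z$ and its explicitly computable smallest singular value.
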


\begin{proof}
  Fix $a>0$ and $z\in \dC$ with $|1-z|>\de$, $|z|<\de^{-1}$. We write $M=DX$,
  where $D=\text{diag}(\rho_{1}^{-1},...,\rho_{n}^{-1})$. Thus, $M-z=DY$ where
  $Y=X-zD^{-1}$, and
  \begin{equation}\label{eq:sn2}
    s_{n,z}\geq s_n(Y) \Big(\max_{i=1,\dots,n}\rho_i\Big)^{-1},
  \end{equation}
  with $s_n(Y)$ denoting the smallest singular value of $Y$. Since $\rho_i\leq
  n\max_{j} X_{i,j}$ one has, for any $\be>0$
  \[
  \dP\Big(\max_{i=1,\dots,n}\rho_i > n^\be\Big)\leq n^2\dP(\bx > n^{\be-1})\leq n^{2-\al(\be-2)},
  \] 
  for all $n$ large enough. Taking $\be$ sufficiently large ensures that
  $n^{2-\al(\be-2)}\leq n^{-a}$. From \eqref{eq:sn2} then we see that it
  suffices to prove that there exists $b>0$ such that
  \begin{equation}\label{eq:sn3}
    \dP\left(s_n(Y) \leq n^{-b}\right)\leq n^{-a}
  \end{equation}
  A repetition of the argument in \cite[Th.~1.4]{BCCm} now shows that for all
  $u>0$:
  \begin{equation}\label{eq:sn4}
    \dP\left(s_n(Y) \leq u\right)\leq 3BK_n\,u\,,
  \end{equation}
  where $B$ is the uniform bound on the probability density of the random
  variable $\bx$, which is available thanks to the assumption $(H2)$, and
  $K_n$ is the inverse of the smallest singular value of the $n\times n$
  matrix
  \[
  A_{z}:=\ind_{n}-z
  \begin{pmatrix}
    1&0&\hskip-0.2cm\cdot\cdot\hskip-0.2cm &0\\
    \vdots &\vdots &
     &\vdots\\
    1 &0&\hskip-0.2cm\cdot\cdot\hskip-0.2cm&0
  \end{pmatrix}.
  \]
  Direct calculations, see e.g.\ \cite[Lem.~C.3]{BCCm}, show that $K_n$
  satisfies
  \begin{align}\label{eq:sn5}
    K_n^2
    &=\frac{1+(n-1)|z|^{2}+|1-z|^{2}+\sqrt{(1+(n-1)|z|^{2}+|1-z|^{2})^{2}-4|1-z|^{2}}}{2 |1-z|^{2}}.
  \end{align}
  Equation \eqref{eq:sn5} can be easily estimated to obtain e.g.
  \begin{align}\label{eq:sn6}
    K_n^2
    &\leq \frac{1+(n-1)|z|^{2}+|1-z|^{2}}{|1-z|^{2}}\nonumber\\
    &\leq 1+ \frac{1+|z|^{2}(n-1)}{\de^{2}}\leq 1+\de^{-2}+ \de^{-4}n.
  \end{align}
  Using \eqref{eq:sn6}, \eqref{eq:sn3} follows by taking $u=n^{-b}$ for $b=b(a,\de)$
  large enough in \eqref{eq:sn4}.
\end{proof}

\subsection{Moderately small singular values}\label{se:sis}

Here we prove \eqref{eq:uniintinprob}, which will conclude the proof of Lemma
\ref{le:unifint}. In view of the bound \eqref{eq:simples}, and using
$|\log(x)|\leq x^{-p}$ for all sufficiently small $x>0$, if $p>0$, we see that
\eqref{eq:uniintinprob} follows if we prove
\begin{equation}\label{eq:topon}
  \limsup_{n} \dE\left(\ind_{G_n}\int_{0}^{\infty} x^{-p}\nu_{M-z}(\dd x)\right)<\infty,
\end{equation}
for some $p>0$ and some sequence of events $G_n$ such that $\dP(G_n)\to1$,
$n\to\infty$. To prove \eqref{eq:topon}, we shall follow very closely the
strategy introduced in \cite[Sec.~3]{BCChnh}; see also
\cite[Sec.~6]{bordenave-chafai-changchun}.

The following statement can be established with a straightforward adaptation
of \cite[Prop.~3.7]{BCChnh}. Set $a_n=(cn)^{1/\al}$ as in Lemma
\ref{le:rows}.

\begin{lemma}[Distance to sub-space]\label{le:distXW2}
  Assume (H1) and (H2) and take $0<\gamma <\al/4$. Let $R_1$ be the first row
  of the matrix $a_n(M - z)$. There exists a constant $C
  >0$ 
  and an event $E$ such that for any $d$-dimensional subspace $W$ of $\dC^n$
  with $ d \leq n-n ^{1 - \gamma}$, one has
  \begin{align*}  \dE \left[   \dist^{-2} ( R_1, W) \ind_E  \right]  \leq   C(n-d)^{-2/ \al}    
    \quad\text{and}\quad
    \dP ( E^c ) \leq Cn^{-(1-2\g)/\al}
    \,.
  \end{align*}
\end{lemma}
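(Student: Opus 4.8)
The plan is to dispose, exactly as in the finite–variance Markov analysis of \cite{BCCm}, of both the shift $-z$ and the row–sum normalisation, reducing the statement to a distance estimate for the bounded random probability vector $M_{1,\cdot}$; that estimate is then a straightforward variant of \cite[Prop.~3.7]{BCChnh}.

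First I would reduce to $z=0$ and to $M_{1,\cdot}$. Writing $R_1=\tfrac{a_n}{\rho_1}X_{1,\cdot}-a_nz\,e_1$ with $\rho_1=\sum_jX_{1,j}$, and using $\dist(R_1,W)\ge\dist(R_1,W+\dC e_1)$ together with $e_1\in W+\dC e_1=:W_1$, the term $-a_nz\,e_1$ disappears, so by homogeneity of the distance and $M_{1,\cdot}=\rho_1^{-1}X_{1,\cdot}$,
\[
\dist(R_1,W)\ \ge\ \dist(a_nM_{1,\cdot},W_1)\ =\ a_n\,\dist(M_{1,\cdot},W_1),\qquad \dim W_1\le d+1 .
\]
It thus suffices to find an event $E$, depending on $W$ only through $\dim W$, with $\dP(E^c)\le Cn^{-(1-2\gamma)/\al}$ and $\dE[\dist^{-2}(M_{1,\cdot},W_1)\,\ind_E]\le Ca_n^{2}(n-d)^{-2/\al}=Cc^{2/\al}\big(\tfrac{n}{n-d}\big)^{2/\al}$. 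The benefit of this reduction is that the normalising sum $\rho_1$ — which is itself $\al$–heavy–tailed, being a sum of infinite–mean variables, and cannot be separated from $X_{1,\cdot}$ by H\"older — is now gone: $M_{1,\cdot}$ is a probability vector, hence bounded, with only $O(\log n)$ macroscopic entries.

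The estimate for $M_{1,\cdot}$ is then the content of \cite[\S3]{BCChnh}. By exchangeability of $(X_{1,j})_j$ the vector $M_{1,\cdot}$ is exchangeable, so conditionally on its ranked rearrangement $p_1\ge\cdots\ge p_n$ its entries sit at uniformly random positions; by part~3 of Lemma~\ref{le:rows} the law of $(p_k)$ converges to PD($\al$). One is thus in the situation of a vector with (heavy–tailed) prescribed values in uniform random positions, to which \cite[Prop.~3.7]{BCChnh} applies: with $\Pi$ the orthogonal projection onto $W_1^\perp$, $r:=\tr\Pi=n-\dim W_1\ge\tfrac12 n^{1-\gamma}$ for $n$ large, and $J:=\{j:\|\Pi e_j\|^2\ge r/(2n)\}$ (so $|J|\ge r/2$ since $\tr\Pi=r$), one thresholds the entries at a level of order $(n-d)^{1/\al}$ times a small negative power of $n$, uses that with probability $\ge1-Cn^{-(1-2\gamma)/\al}$ enough of the positions of the above–threshold entries fall in $J$, rules out cancellation by a restricted–invertibility bound for $\Pi$ on those coordinates, and bounds the remaining entries using $\|M_{1,\cdot}\|_1=1$. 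This yields the required bound, the exponent $2/\al$ coming from the PD($\al$)–decay of the $p_k$. Collecting these steps proves the lemma.

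The bulk of the work is this last estimate, but it is a transcription of \cite[Prop.~3.7]{BCChnh} with the i.i.d.–matrix entry law replaced by PD($\al$); the only genuine novelty is the reduction in the first step. The hypothesis $\gamma<\al/4$ is used, as in \cite{BCChnh}, to calibrate the threshold so that every auxiliary event has failure probability $O(n^{-(1-2\gamma)/\al})$ while all the resulting polynomial factors remain dominated by $a_n^2\asymp n^{2/\al}$; keeping this calibration consistent is the one delicate point.
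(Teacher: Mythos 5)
The paper offers no proof of this lemma beyond the remark that it ``can be established with a straightforward adaptation of \cite[Prop.~3.7]{BCChnh}'', so your proposal --- the correct reduction to $z=0$ and to the bounded exchangeable vector $M_{1,\cdot}$ via $W_1=W+\dC e_1$, followed by that same adaptation with the i.i.d.\ entry law replaced by the PD($\al$)-type ranked values at uniformly random positions --- is essentially the paper's route, carried out in more detail. The only caveat is that your phrase ``a transcription of \cite[Prop.~3.7]{BCChnh}'' glosses over the one genuinely new difficulty, namely that the entries of $M_{1,\cdot}$ are coupled through $\rho_1$ (a large $\rho_1$, equivalently a large limiting $S$, shrinks \emph{all} entries simultaneously, so its heavy tail must be absorbed into the expectation bound rather than into the event $E$); but this is precisely the point the paper itself leaves to the reader.
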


Next, we prove \eqref{eq:topon}. Fix $i\geq 2n^{1-\g}$, and let $A$ be the
matrix of the first $n-i/2$ rows of $a_{n}(M-z)$ and let $\vte_{1}\geq \cdots
\geq \vte_{n-i/2}$ denote its singular values. By the negative second moment
identity of Tao and Vu \cite[Lem.~A4]{tao-vu-cirlaw-bis},
\[
\vte_{1}^{-2}+\cdots +\vte_{n-i/2}^{-2}=\de_{1}^{-2}+\cdots\de_{n-i/2}^{-2},
\]
where $\de_{j}$ is defined as the euclidean distance $\dist(R_j,R_{-j})$, where
$R_j$ is the $j$-th row of $A$ and $R_{-j}$ is the span of all other rows of
$A$. On the other hand by Cauchy interlacing lemma, $s_{n-i,z}\geq
\vte_{n-i}/a_{n}$, and therefore $s_{n-i,z}^{-2}\leq
\frac{2a_n^2}i\sum_{j=n-i}^{n-i/2}\vte_j^{-2}$. This implies
\begin{equation}\label{eq:tak1}
  i\,  s_{n-i,z}^{-2}\leq 2a_{n}^{2}\sum_{j=1}^{n-i/2}\de_{j}^{-2}.
\end{equation}
Reasoning as in \cite[Prop.~3.3]{BCChnh} one has that the event $F$ that
$\de_{j}\geq n^{(1-2\g)/\al}$ for all $j=1,\dots, n-i/2$ has probability at
least $1-\e^{-n^\de}$ for some $\de>0$.

Taking expectation in \eqref{eq:tak1}, we get
\begin{equation}\label{eq:sumi}
  \dE\left[ i   s^{-2}_{n-i,z}  \ind_{F}\right] 
  \leq   2a^{2}_n  n  \dE\left[\de^{-2}_1\,    \ind_{F} \right]\,,
\end{equation}
Moreover, if $E$ denotes the event from Lemma \ref{le:distXW2}, then since
$R_{-1}$ has dimension $d<n-i/2\leq n-n^{1-\g}$, we see that
\begin{equation}\label{eq:sumi2}
  \dE[ \de^{-2}_1 \,    \ind_{E}] \leq C\,i^{-2/\al}.  
\end{equation}
From \eqref{eq:sumi2} it follows that
\begin{align*}
  \dE\left[\de^{-2}_1\,    \ind_{F} \right]&\leq C\,i^{-2/\al}+ \dE\left[\de^{-2}_1\,    \ind_{F\cap E^c} \right]\\
  &\leq C\,i^{-2/\al}+ n^{-2(1-2\g)/\al} \dP(E^c)\leq C\, i^{-2/\al} +
  n^{-3(1-2\g)/\al}\,,
\end{align*}
where we use the bound $\de_{1}\geq n^{(1-2\g)/\al}$ on $F$ and the bound on
$\dP(E^c)$ from Lemma \ref{le:distXW2}. If $\g< 1/6$, then $3(1- 2\g)/\al >
2/\al$ and therefore $n^{-3(1-2\g)/\al} \leq i^{-2/\al}$. From \eqref{eq:sumi}
and recalling that $a_n=(cn)^{1/\al}$ one then obtains
\begin{align}\label{eq:crux}
  \dE\left[ s^{-2}_{n-i,z}  \ind_{F}\right] 
 \leq C'  
\left(\frac{n}i\right)^{(1+2/\al)} \,,
\end{align}
for some new constant $C'>0$. 

From \eqref{eq:crux} one can prove \eqref{eq:topon} as follows. Define the event
$G_n:=F\cap\{s_{n,z}\geq n^{-b}\}$ for some $b>0$. From the above facts and
from Theorem \ref{th:sn}, we can choose $b$ so that $\dP(G_n)\to 1$. One has,
for $0<p\leq 2$
\begin{align*}
  &\dE\left(\ind_{G_n}\int_{0}^{\infty} x^{-p}\nu_{M-z}(\dd x)\right) %
  = \frac1n\sum_{i=0}^{\lfloor 2n^{1-\g}\rfloor}
  \dE[s_ {n-i,z}^{-p}\ind_{G_n}] + \frac1n\sum_{i=\lfloor 2n^{1-\g}\rfloor +1}^{n-1}
  \dE[s_ {n-i,z}^{-p}\ind_{G_n}]\\&
  \qquad \leq  2n^{bp}n^{-\g} +  \frac1n\sum_{i=\lfloor 2n^{1-\g}\rfloor +1}^{n-1}
  \dE[s_ {n-i,z}^{-2}\ind_{F}]^{p/2} \leq 2n^{bp}n^{-\g}+ 
  \frac{C}n\sum_{i=1}^{n}\left(\frac{n}i\right)^{\frac{p}2(1+2/\al)},
\end{align*}
for some new constant $C>0$. If $\g\in(0,1/6)$ is fixed, and $b>0$ is given, then we choose $p$ such that $p<\g/b$ and $p<2\al(2+\al)$  so that  the above expression is uniformly bounded. 

\bibliographystyle{plain}
\bibliography{heavymarkov}

\end{document}